\documentclass[11pt,a4paper]{article}
\usepackage{color}
\usepackage{enumerate}
\usepackage{comment}
\usepackage[english]{babel}
\usepackage{exscale,times}

\usepackage{amssymb}
\usepackage{amsmath}
\usepackage{psfrag}
\usepackage{ifthen}
\usepackage{graphicx}

\usepackage{fullpage}

\setlength{\parindent}{0pt}
\setlength{\parskip}{5pt plus 2pt minus 1 pt}
\topmargin -5mm
\evensidemargin 8mm
\oddsidemargin 2mm
\textwidth 158mm
\textheight 230mm
\frenchspacing
\sloppy

\newenvironment{proof}[1][]
{% \begin{proof}
   \noindent\textrm{\bf Proof%
   \ifthenelse{\equal{#1}{}}{:}{~#1:} }\rm
}
{% \end{proof}
   \hfill$\square$
   \bigskip
}

\newtheorem{theorem}{Theorem}[section]
\newtheorem{proposition}[theorem]{Proposition}
\newtheorem{lemma}[theorem]{Lemma}
\newtheorem{corollary}[theorem]{Corollary}
\newtheorem{definition}[theorem]{Definition}
\newtheorem{remark}[theorem]{Remark}
\newtheorem{example}[theorem]{Example}
\newcommand{\eex}{\hbox{}\hfill\rule{0.8ex}{0.8ex}}
\newcommand{\eremk}{\eex}

\newcommand{\R}[0]{\mathbb{R}}

\newcommand{\N}[0]{\mathbb{N}}

\renewcommand{\H}[0]{\mathcal{H}}

\newcommand{\ol}[1]{\overline{#1}}

\newcommand{\ra}[0]{\rightarrow}

\newcommand{\abs}[1]{\left\vert #1 \right\vert}
\newcommand{\norm}[1]{\left\| #1 \right\|}

\newcommand{\bea}{\begin{eqnarray*}}
\newcommand{\eea}{\end{eqnarray*}}
\newcommand{\bean}{\begin{eqnarray}}
\newcommand{\eean}{\end{eqnarray}}
\newcommand{\been}{\begin{equation}}
\newcommand{\een}{\end{equation}}
\newcommand{\bee}{\begin{equation*}}
\newcommand{\ee}{\end{equation*}}

\newcommand{\bdefi}{\begin{definition}}
\newcommand{\edefi}{\end{definition}}
\newcommand{\bthe}{\begin{theorem}}
\newcommand{\ethe}{\end{theorem}}
\newcommand{\bpro}{\begin{proof}}
\newcommand{\epro}{\end{proof}}
\newcommand{\blem}{\begin{lemma}}
\newcommand{\elem}{\end{lemma}}
\newcommand{\bcor}{\begin{corollary}}
\newcommand{\ecor}{\end{corollary}}
\newcommand{\brem}{\begin{remark}}
\newcommand{\erem}{\end{remark}}
\newcommand{\bpropo}{\begin{proposition}}
\newcommand{\epropo}{\end{proposition}}

\newcommand{\bsatz}{\begin{satz}}
\newcommand{\esatz}{\end{satz}}

\newcommand{\bbsp}{\begin{beispiel}\rm}
\newcommand{\ebsp}{\end{beispiel}}

\newcommand{\bbem}{\begin{bemerkung}}
\newcommand{\ebem}{\end{bemerkung}}

\newcommand{\bpm}{\begin{pmatrix}}
\newcommand{\epm}{\end{pmatrix}}

\renewcommand{\div}{\rm \div}

\renewcommand{\epsilon}{\varepsilon}

\newcommand{\supp}[1]{\text{supp}(#1)}
\renewcommand{\supp}{\operatorname*{supp}}

\newcommand{\skp}[1]{\left< #1 \right>}
\newcommand{\T}{\mathcal{T}}

\newcommand{\unjump}{[\partial_{n}u]}
\newcommand{\vnjump}{[\partial_{n}v]}

\newcommand{\vjump}{[\gamma_0 v]}

\newcommand{\lefttriplenorm}{\ensuremath{\left| \! \left| \! \left|}}
\newcommand{\righttriplenorm}{\ensuremath{\right| \! \right| \! \right|}}
\newcommand{\triplenorm}[1]{\lefttriplenorm #1 \righttriplenorm}

\newcommand{\level}{{\rm level}}

\newcommand{\Pfar}{P_{\rm far}}

\newcommand{\diam}{{\rm diam}}
\newcommand{\dist}{{\rm dist}}
\newcommand{\curl}{{\rm curl}}

\newcommand{\Win}{\boldsymbol{\mathcal W}^{-1}}

\numberwithin{equation}{section}

%opening
\title{}
\author{}

\begin{document}

%\maketitle

\hskip 10 pt

\begin{center}
{\fontsize{14}{20}\bf Existence of $\H$-matrix approximants to the inverse of 
BEM matrices: the hyper-singular integral operator}
\end{center}

\begin{center}
\textbf{Markus Faustmann, Jens Markus Melenk, Dirk Praetorius}\\
\bigskip
{Institute for Analysis and Scientific Computing}\\
Vienna University of Technology\\
Wiedner Hauptstr. 8-10, 1040 Wien, Austria\\markus.faustmann@tuwien.ac.at, melenk@tuwien.ac.at, dirk.praetorius@tuwien.ac.at\\
\bigskip
\end{center}

\begin{abstract}
We consider discretizations of the hyper-singular integral operator on closed surfaces 
and show that the inverses of the corresponding system matrices can be approximated by blockwise
low-rank matrices at an exponential rate in the block rank. We cover in particular the data-space
format of $\H$-matrices. We show the approximability result for two types of discretizations. 
The first one is a saddle point formulation, which incorporates the constraint of 
vanishing mean of the solution. The second discretization is based on 
a stabilized hyper-singular operator, which leads to symmetric positive definite matrices. 
In this latter setting, we also show that the hierarchical Cholesky factorization can be 
approximated at an exponential rate in the block rank. 
\end{abstract}

\section{Introduction}
Boundary element method (BEM) are obtained as the discretizations of 
boundary boundary integral equations. These arise, for example, when 
elliptic partial differential equations are reformulated as integral equations on the 
boundary $\Gamma:= \partial\Omega$ of a domain $\Omega \subset \R^d$. A particular strength
of these methods is that they can deal with unbounded exterior domains. Reformulating 
an equation posed in a volume as one on its boundary brings about a significant reduction
in complexity. However, the boundary integral operators are fully occupied, and 
this has sparked the development of various matrix compression techniques. One possibility, 
which we will not pursue here,  
are wavelet compression techniques, 
\cite{rathsfeld98,rathsfeld01,schneider98,petersdorff-schwab-schneider97,tausch03,tausch-white03},
where sparsity of the system matrices results from the choice of basis. 
In the present work, we will consider data-sparse matrix formats that are based on blockwise 
low-rank matrices. These formats can be traced back to 
multipole expansions, \cite{rokhlin85,greengard-rokhlin97}, 
panel clustering, \cite{hackbusch-nowak88,hackbusch-nowak89,hackbusch-sauter93,sauter92},
and were then further developed in the mosaic-skeleton method, \cite{tyrtyshnikov00}, 
the adaptive cross approximation (ACA)
method, \cite{bebendorf00}, and the hybrid cross approximation (HCA), \cite{boerm-grasedyck05}. 
A fairly general framework for these techniques is given by 
the ${\mathcal H}$-matrices, introduced 
in \cite{Hackbusch99,GrasedyckHackbusch,GrasedyckDissertation,HackbuschBuch} 
and the $\H^2$-matrices, \cite{HackbuschKhoromskijSauter,Boerm,BoermBuch}. Both $\H$- and 
$\H^2$-matrices come with
an (approximate) arithmetic and thus provide the possibility of 
(approximately) inverting or factorizing a BEM matrix; also 
algebraic approaches to the design of preconditioners for boundary element discretizations, both 
for positive and negative order operators, are available with this framework.
Empirically, it has already been observed in \cite{GrasedyckDissertation,Bebendorf05} that such
an approach works well in practice.

Mathematically, the fundamental question in connection with the $\H$-matrix arithmetic is whether the desired
result, i.e., the inverse (or a factorization such as an $LU$- or Cholesky factorization), can 
be represented accurately in near optimal complexity in this format. This question is 
answered in the affirmative in the present work for discretizations of the hyper-singular integral
operator associated with the Laplace operator. In previous work, we showed similar existence results for 
FEM discretizations \cite{FMPFEM} and the discretization of the 
single layer operator, \cite{FMPBEM}. Compared to the symmetric positive definite case of the 
single layer operator studied in \cite{FMPBEM}, the hyper-singular operator on closed surfaces has 
a one-dimensional kernel and is naturally treated as a (simple) saddle point problem. 
We show
in Theorem~\ref{th:Happrox} (cf. also Remark~\ref{rem:saddle-point})
that the inverse of the discretization of this saddle point formulation can be approximated
by blockwise low-rank matrices at an exponential rate in the block rank. A corresponding 
approximation result for the discretized version of the stabilized hyper-singular operator 
follows then fairly easily in Corollary~\ref{cor:stabGalerkin}. The approximation result 
Theorem~\ref{th:Happrox} also underlies our proof that the hierarchical Cholesky factorization
of the stabilized hyper-singular operator admits an efficient representation 
in the $\H$-matrix format (Theorem~\ref{th:HLU}). 

The approximability problem for the inverses of Galerkin BEM-matrices has previously only
been studied in \cite{FMPBEM} for the single layer operator. In a FEM context, works prior 
to \cite{FMPFEM} include \cite{Bebendorf,Bebendorf4,Bebendorf05}, \cite{schreittmiller06}, and 
\cite{Boerm}. These works differ from \cite{FMPFEM,FMPBEM} and the present paper in an important
technical aspect: while \cite{FMPFEM,FMPBEM} and the present analysis analyze the discretized 
operators and show exponential convergence in the block rank, the above mentioned works 
study first low-rank approximations on the continuous level and transfer these to the discrete
level in a final projection step. Therefore, they achieve exponential convergence in the block rank 
up to this projection error, which is related to the discretization error. 

The paper is structured as follows. In the interest of readability, we have collected 
the main result concerning the approximability of the inverse of the discretization of the 
saddle point formulation  in Section~\ref{sec:main-results}.  The mathematical core is 
found in Section~\ref{sec:Approximation-solution}, where we study how well solutions of the 
(discretized) hyper-singular integral equation can be approximated from low-dimensional spaces
(Theorem~\ref{thm:function-approximationHypSing}). In contrast to \cite{FMPBEM}, which considered
only lowest-order discretization, we consider here arbitrary fixed-order discretizations. 
The approximation result of Section~\ref{sec:Approximation-solution} can be translated
to the matrix level, which is done in Section~\ref{sec:H-matrix-approximation}. 
Section~\ref{sec:stabGalerkin} shows how the results for the saddle point formulation imply 
corresponding ones for the stabilized hyper-singular operator. 
Finally, Section~\ref{sec:LU-decomposition} provides the existence of an 
approximate $\H$-Cholesky decomposition. We close with numerical examples in 
Section~\ref{sec:numerics}. 

\medskip

We use standard integer order Sobolev spaces and the fractional order Sobolev spaces $H^{1/2}(\Gamma)$ 
and its dual $H^{-1/2}(\Gamma)$ as defined in, e.g., \cite{SauterSchwab}. 
The notation $\lesssim$ abbreviates $\leq$ up to a 
constant $C>0$ that depends only on the domain $\Omega$, the spatial dimension $d$, 
the polynomial degree $p$, and the $\gamma$-shape regularity of $\mathcal{T}_h$. It does not, however, 
depend on critical parameters such as the mesh size $h$, the dimension of the finite dimensional BEM
space, or the block rank employed. 
Moreover, we use $\simeq$ to indicate that both estimates
$\lesssim$ and $\gtrsim$ hold.

%--------------------------------------
\section{Main Result}
\label{sec:main-results}
%--------------------------------------
\subsection{Notation and setting} 
%--------------------------------------
Throughout this paper, we assume that $\Omega \subset \R^{d}$, $d \in \{2,3\}$ is a
bounded Lipschitz domain such that $\Gamma:=\partial \Omega$ 
is polygonal (for $d = 2$) or polyhedral (for $d = 3$).
We assume that $\Gamma$ is connected.

We consider the hyper-singular integral operator $W \in L(H^{1/2}(\Gamma),H^{-1/2}(\Gamma))$ given by 
\bee
W v(x) = -\gamma_{1,x}^{\text{int}}(\widetilde{K}v)(x) = -\gamma_{1,x}^{\text{int}}\int_{\Gamma}(\gamma_{1,y}^{\text{int}}G(x-y))v(y) ds_y, \quad x \in \Gamma,
\ee
where $G(x) = -\frac{1}{2\pi} \log\abs{x}$ for $d=2$ and $G(x) = \frac{1}{4\pi}\frac{1}{\abs{x}}$ for $d=3$ 
is the fundamental solution associated with the Laplacian. Here, the double layer potential 
$\widetilde{K} \in L(H^{1/2}(\Gamma),H^{1}(\Omega))$ 
is given by $\widetilde{K}v(x) := \int_{\Gamma}(\gamma_{1,y}^{\text{int}}G(x-y))v(y) ds_y$, 
where $\gamma_{1,z}^{\text{int}}$ denotes the interior conormal derivative at the point $z \in \Gamma$, i.e., 
with the normal vector $n(z)$ at $z \in \Gamma$ pointing into $\Omega^c$ and some sufficiently smooth function $u$
defined in  $\Omega$ one requires $\gamma_{1,z}^{\text{in}} u = \nabla u(z) \cdot n(z)$. 

The hyper-singular integral operator $W$ is symmetric, positive semidefinite on $H^{1/2}(\Gamma)$.
Since $\Gamma$ is connected, $W$ has a one-dimensional kernel given by the constant functions. 
In order to deal with this kernel, we can either use factor spaces, stabilize the operator, 
or study a saddle point formulation. 
In the following, we will employ the latter by adding the side constraint of vanishing mean. 
In Section~\ref{sec:stabGalerkin} we will very briefly study the case of the stabilized operator, and 
our analysis of Cholesky factorizations in Section~\ref{sec:LU-decomposition} will be performed 
for the stabilized operator. 

With the bilinear form $b(v,\mu) := \mu \int_{\Gamma}v ds_x$, 
we get the saddle point formulation of the boundary integral equation
\bee
W\phi = f \quad \text{on} \;\Gamma
\ee
with arbitrary $f \in H^{-1/2}(\Gamma)$ as finding $(\phi,\lambda) \in H^{1/2}(\Gamma) \times \R$ 
such that
\begin{subequations}\label{eq:modelHScont}
\begin{align} 
 \skp{W\phi,\psi} + b(\psi,\lambda) &= \skp{f,\psi} \qquad \forall \psi \in H^{1/2}(\Gamma), \\
b(\phi,\mu) &= 0 \qquad \forall \mu \in \R. 
\end{align}
\end{subequations}
By classical saddle-point theory, this problem has a unique solution $(\phi,\lambda) \in H^{1/2}(\Gamma) \times \R$,
since the bilinear form $b$ satisfies an inf-sup condition, and the bilinear form $\skp{W\phi,\psi}$ is coercive
on the kernel of $b(\cdot,\lambda)$, which is just the one-dimensional space of constant functions 
(see, e.g., \cite{SauterSchwab}).

For the discretization, we assume that $\Gamma$ is triangulated by a (globally) {\it quasiuniform} mesh 
${\mathcal T}_h=\{T_1,\dots,T_M\}$ of mesh width $h := \max_{T_j\in \mathcal{T}_h}{\rm diam}(T_j)$. 
The elements $T_j \in \mathcal{T}_h$ are open line segments ($d=2$) or triangles ($d=3$). 
Additionally, we assume that the mesh $\T_h$ is regular in the sense of Ciarlet and 
$\gamma$-shape regular in the sense that for $d=2$ the quotient of the diameters of neighboring elements 
is bounded by $\gamma$ and for $d=3$ we have 
${\rm diam}(T_j) \le \gamma\,|T_j|^{1/2}$ for all $T_j\in\mathcal{T}_h$, where $|T_j| = \operatorname*{area}(T_j)$
denotes the length/area of the element $T_j$. 

We consider the Galerkin discretization of $W$ by continuous, piecewise polynomial functions of 
fixed degree $p \geq 1$ in 
$S^{p,1}({\mathcal T}_h) := \{u \in C(\Gamma)\, :\, u|_T \in P_p(T) \, \forall T \in \T_h \}$, where
$P_p(T)$ denotes the space of polynomials of maximal degree $p$ on the triangle $T$.
We choose a basis of $S^{p,1}({\mathcal T}_h)$, which is denoted by
 ${\mathcal B}_h:= \{\psi_j\, :\, j = 1,\dots, N\}$. 
Given that our results are formulated for matrices, assumptions on the basis ${\mathcal B}_h$ 
need to be imposed. For the 
isomorphism $\Phi:\R^N\ra S^{p,1}({\mathcal T}_h)$, $\mathbf{x} \mapsto \sum_{j=1}^Nx_j\psi_j$, 
we require 
\been\label{eq:basisisomorphism}
h^{(d-1)/2}\norm{\mathbf{x}}_2 \lesssim \norm{\Phi(\mathbf{x})}_{L^2(\Gamma)} \lesssim h^{(d-1)/2}\norm{\mathbf{x}}_2
\quad \forall\, \mathbf{x} \in \R^N. 
\een

\begin{remark}
{\rm 
The standard basis for $p=1$ consists of the classical hat functions satisfying 
$\psi_j(x_i) = \delta_{ij}$ and for $p\geq 2$ 
we refer to, e.g., \cite{SchwabBuch,karniadakis-sherwin99,demkowicz-kurtz-pardo-paszynski-rachowicz-zdunek08}.
These bases satisfy assumption \eqref{eq:basisisomorphism}.}
\eremk
\end{remark}

The discrete variational problem is given by finding $(\phi_h,\lambda_h) \in S^{p,1}(\T_h) \times \R$ 
such that
\bean\label{eq:modelHS}
\skp{W\phi_h,\psi_h} + b(\psi_h,\lambda_h) &=& \skp{f,\psi_h} \qquad \forall \psi_h \in S^{p,1}(\T_h), \\
b(\phi_h,\mu) &=& 0 \qquad \forall \mu \in \R. \nonumber
\eean

Since the bilinear form $b$ trivially satisfies a discrete inf-sup condition, the discrete problem 
is uniquely solvable as well, and one has the stability bounds 
\begin{equation}
\label{eq:discrete-stability-1} 
\norm{\phi_h}_{H^{1/2}(\Gamma)} + |\lambda| \leq C \norm{f}_{H^{-1/2}(\Gamma)},  
\end{equation}
for a constant $C > 0$ which depends only on $\Gamma$. For $f \in L^2(\Gamma)$ and the $L^2$-projection 
$\Pi^{L^2}:L^2(\Gamma) \rightarrow S^{p,1}(\T_h)$, one even has the following estimate 
\begin{equation}
\label{eq:discrete-stability} 
\norm{\phi_h}_{H^{1/2}(\Gamma)} + |\lambda| \leq C \norm{\Pi^{L^2} f}_{L^{2}(\Gamma)} 
\leq C \norm{f}_{L^2(\Gamma)}. 
\end{equation}

With the basis $\mathcal{B}_h$, the left-hand side of \eqref{eq:modelHS} leads to the invertible block matrix
\been
\label{eq:Wtilde}
\boldsymbol{\mathcal{W}} := \bpm \mathbf{W} & \mathbf{B} \\ \mathbf{B}^T & 0 \epm,
\een
where the matrix $\mathbf{W} \in \R^{N\times N}$ and the vector $\mathbf{B} \in \R^{N\times 1}$ are given by
\been\label{eq:matrixHypsing}
\mathbf{W}_{jk} = \skp{W\psi_k,\psi_j}, \quad \mathbf{B}_j = \skp{\psi_j,1}, \quad \psi_k,\psi_j \in \mathcal{B}_h.
\een

%--------------------------------------
\subsection{Approximation of $\boldsymbol{\mathcal{W}}^{-1}$ by blockwise low-rank matrices} 
%--------------------------------------
Our goal is to approximate the inverse matrix $\boldsymbol{\mathcal{W}}^{-1}$ by $\H$-matrices, which 
are based on the concept that certain 'admissible' blocks can be approximated by low-rank factorizations.
The following definition specifies for which blocks such a factorization can be derived. 

\begin{definition}[bounding boxes and $\eta$-admissibility]
\label{def:admissibility}
A \emph{cluster} $\tau$ is a subset of the index set $\mathcal{I} = \{1,\ldots,N\}$. For a cluster $\tau \subset \mathcal{I}$, 
we say that $B_{R_{\tau}} \subset \R^d$ 
is a \emph{bounding box} if: 
\begin{enumerate}[(i)]
 \item 
\label{item:def:admissibility-i}
$B_{R_{\tau}}$ is a hyper cube with side length $R_{\tau}$,
 \item 
\label{item:def:admissibility-ii}
$ \supp \psi_i \subset B_{R_{\tau}}$ for all $ i \in \tau $.
\end{enumerate}

For an admissibility parameter $\eta > 0$, 
a pair of clusters $(\tau,\sigma)$ with $\tau,\sigma \subset \mathcal{I}$ 
is $\eta$-\emph{admissible} if 
there exist bounding boxes $B_{R_{\tau}}$, $B_{R_{\sigma}}$ satisfying 
(\ref{item:def:admissibility-i})--(\ref{item:def:admissibility-ii})
such that
\been\label{eq:admissibility}
\min\{{\rm diam}(B_{R_{\tau}}),{\rm diam}(B_{R_{\sigma}})\} \leq  \eta \; {\rm dist}(B_{R_{\tau}},B_{R_{\sigma}}).
\een
\end{definition}

\begin{definition}[blockwise rank-$r$ matrices]
Let $P$ be a partition of ${\mathcal I} \times {\mathcal I}$ and $\eta>0$. 
A matrix ${\mathbf W}_{\mathcal{H}} \in \mathbb{R}^{N \times N}$ 
is said to be a \emph{blockwise rank-$r$ matrix}, if for every $\eta$-admissible cluster pair $(\tau,\sigma) \in P$, 
the block ${\mathbf W}_{\mathcal{H}}|_{\tau \times \sigma}$ is a rank-$r$ matrix, i.e., it has the form 
${\mathbf W}_{\mathcal{H}}|_{\tau \times \sigma} = {\mathbf X}_{\tau \sigma} {\mathbf Y}^T_{\tau \sigma}$ with 
$\mathbf{X}_{\tau\sigma} \in \mathbb{R}^{\abs{\tau}\times r}$ 
and $\mathbf{Y}_{\tau\sigma} \in \mathbb{R}^{\abs{\sigma}\times r}$.
Here and below, $\abs{\sigma}$ denotes the cardinality of a finite set $\sigma$.
\end{definition}

\begin{definition}[cluster tree]
A \emph{cluster tree} with \emph{leaf size} $n_{\rm leaf} \in \mathbb{N}$ is a binary tree $\mathbb{T}_{\mathcal{I}}$ with root $\mathcal{I}$  
such that for each cluster $\tau \in \mathbb{T}_{\mathcal{I}}$ the following dichotomy holds: either $\tau$ is a leaf of the tree and 
$\abs{\tau} \leq n_{\rm leaf}$, or there exist sons $\tau'$, $\tau'' \in \mathbb{T}_{\mathcal{I}}$, which are disjoint subsets of $\tau$ with 
$\tau = \tau' \cup \tau''$. The \emph{level function} ${\rm level}: \mathbb{T}_{\mathcal{I}} \rightarrow \mathbb{N}_0$ is inductively defined by 
${\rm level}(\mathcal{I}) = 0$ and ${\rm level}(\tau') := {\rm level}(\tau) + 1$ for $\tau'$ a son of $\tau$. The \emph{depth} of a cluster tree
is ${\rm depth}(\mathbb{T}_{\mathcal{I}}) := \max_{\tau \in \mathbb{T}_{\mathcal{I}}}{\rm level}(\tau)$.  
\end{definition}

\begin{definition}[far field, near field, and sparsity constant]
 A partition $P$ of $\mathcal{I} \times \mathcal{I}$ is said to be based on the cluster tree $\mathbb{T}_{\mathcal{I}}$, 
if $P \subset \mathbb{T}_{\mathcal{I}}\times\mathbb{T}_{\mathcal{I}}$. For such a partition $P$ 
and a fixed admissibility parameter $\eta > 0$, we define the \emph{far field} and the \emph{near field} 
as 
\begin{equation}\label{eq:farfield}
P_{\rm far} := \{(\tau,\sigma) \in P \; : \; (\tau,\sigma) \; \text{is $\eta$-admissible}\}, \quad P_{\rm near} := P\setminus P_{\rm far}.
\end{equation}
The \emph{sparsity constant} $C_{\rm sp}$ of such a partition 
was introduced in \cite{GrasedyckDissertation} as 
\begin{equation}\label{eq:sparsityConstant}
C_{\rm sp} := \max\left\{\max_{\tau \in \mathbb{T}_{\mathcal{I}}}\abs{\{\sigma \in \mathbb{T}_{\mathcal{I}} \, : \, \tau \times \sigma \in P_{\rm far}\}},\max_{\sigma \in \mathbb{T}_{\mathcal{I}}}\abs{\{\tau \in \mathbb{T}_{\mathcal{I}} \, : \, \tau \times \sigma \in P_{\rm far}\}}\right\}.
\end{equation}
\end{definition}

The following theorem is the main result of this paper. It states that
the inverse matrix $\boldsymbol{\mathcal{W}}^{-1}$ can be approximated by an $\H$-matrix, 
where the approximation error in the spectral norm converges exponentially in the block rank.

\bthe\label{th:Happrox}
Fix an admissibility parameter $\eta >0$. Let a partition $P$ of $\mathcal{I}\times\mathcal{I}$ be based 
on the cluster tree $\mathbb{T}_{\mathcal{I}}$.
Then, there exists a blockwise rank-$r$ matrix $\mathbf{V}_{\H}$ such that

\bee
\norm{\boldsymbol{\mathcal{W}}^{-1}|_{N\times N} - \mathbf{V}_{\H}}_2 \leq C_{\rm apx}C_{\rm sp} 
{\rm depth}(\mathbb{T}_{\mathcal{I}}) 
N^{(2d-1)/(2d-2)} e^{-br^{1/(d+1)}}.
\ee

The constant $C_{\rm apx}$ depends only on $\Omega$, $d$, $p$, and the $\gamma$-shape 
regularity of the quasiuniform triangulation $\T_h$, while
the constant $b>0$ additionally depends on $\eta$.
\ethe

\begin{remark}[approximation of inverse of full system]
\label{rem:saddle-point}
{\rm 
The previous theorem provides an approximation $\mathbf{V}_{\H}$ to the first 
$N\times N$-subblock $\mathbf{V}$ of the matrix 
$\boldsymbol{\mathcal{W}}^{-1} = \bpm \mathbf{V} & \mathbf{P} \\ \mathbf{P}^T & 0\epm$. 
Since $\mathbf{P} \in \R^{N\times 1}$ is a vector, the matrix 
$\widehat{\mathbf{V}}_{\H} = \bpm \mathbf{V}_{\H}  & \mathbf{P} \\ \mathbf{P}^T & 0\epm$ is a 
blockwise rank-$r$ approximation to the matrix $\boldsymbol{\mathcal{W}}^{-1}$ satisfying

\bee
\norm{\boldsymbol{\mathcal{W}}^{-1} - \widehat{\mathbf{V}}_{\H}}_2 \leq 
C_{\rm apx}C_{\rm sp} {\rm depth}(\mathbb{T}_{\mathcal{I}}) 
N^{(2d-1)/(2d-2)} e^{-br^{1/(d+1)}}.
\ee
}\eremk
\end{remark}

\begin{remark}[relative errors]
{\rm 
In order to derive a bound for the relative error, we need an estimate on 
$\norm{\boldsymbol{\mathcal{W}}}_2$, since 
$\frac{1}{\norm{{\boldsymbol{\mathcal{W}}}^{-1}}_2} \leq \norm{\boldsymbol{\mathcal{W}}}_2$.  
Since $\mathbf{W}$ is symmetric it suffices to estimate the Rayleigh quotient.
The continuity of the hyper-singular integral operator as well as an inverse inequality, see 
Lemma~\ref{lem:inverseinequality} below, and \eqref{eq:basisisomorphism} imply
\begin{eqnarray*}
\skp{\boldsymbol{\mathcal{W}}\bpm \mathbf{v} \\ \lambda \epm,\bpm\mathbf{v} \\ \lambda \epm} 
&\lesssim& \norm{v}_{H^{1/2}(\Gamma)}^2 + \abs{\lambda \skp{v,1}} \\
&\lesssim& h^{-1}\norm{v}_{L^{2}(\Gamma)}^2 + \abs{\lambda}\norm{v}_{L^2(\Gamma)}
\lesssim h^{d-2}\norm{\bpm \mathbf{v} \\ \lambda \epm}_2^2. 
\end{eqnarray*}
Using $h \simeq N^{-1/(d-1)}$, we get a bound for the relative error

\begin{equation}
\frac{\norm{{\boldsymbol{\mathcal{W}}}^{-1} - \widehat{\mathbf{V}}_{\H}}_2}{\norm{\boldsymbol{\mathcal{W}}^{-1}}_2 }
\lesssim C_{\rm apx} C_{\rm sp} N^{(d+1)/(2d-2)} {\rm depth}(\mathbb{T}_{\mathcal{I}}) e^{-br^{1/(d+1)}}.
\end{equation}
}\eremk
\end{remark}

\section{Approximation of the potential}
\label{sec:Approximation-solution}
In order to approximate the inverse matrix $\boldsymbol{\mathcal{W}}^{-1}$ by a blockwise low-rank matrix, 
we will analyze how well the solution of \eqref{eq:modelHS} can be approximated from low dimensional spaces.

Solving the problem \eqref{eq:modelHS} is equivalent to solving the linear system
\been\label{eq:linearsystem}
\bpm \mathbf{W} & \mathbf{B} \\ \mathbf{B}^T & 0 \epm \bpm\mathbf{x} \\ \lambda \epm = \bpm \mathbf{b} \\0\epm
\een
with $\mathbf{W}$, $\mathbf{B}$ from \eqref{eq:matrixHypsing} and $\mathbf{b} \in \R^N$ 
defined by $\mathbf b_j = \skp{f,\psi_j}$.

The solution vector $\mathbf{x}$ is linked to the Galerkin solution $\phi_h$ from \eqref{eq:modelHS} via 
$\phi_h = \sum_{j=1}^N \mathbf{x}_j\psi_j$.

In this section, we will repeatedly use the $L^2(\Gamma)$-orthogonal projection
$\Pi^{L^2}:L^2(\Gamma)\ra S^{p,1}(\T_h)$ onto $S^{p,1}(\T_h)$, which, we recall, is defined by 
\been\label{L2projection}
\skp{\Pi^{L^2}v,\psi_h} = \skp{v,\psi_h} \quad \forall \psi_h \in S^{p,1}(\T_h).
\een

The following theorem is the main result of this section; it states that
for an admissible block $(\tau,\sigma)$, there exists a low dimensional approximation space such that the 
restriction to $B_{R_{\tau}}\cap\Gamma$ of the Galerkin solution $\phi_h$ can be approximated well  
from it as soon as the right-hand side $f$ has support in $B_{R_{\sigma}}\cap\Gamma$.

\begin{theorem}\label{thm:function-approximationHypSing}
Let $(\tau,\sigma)$ be a cluster pair with bounding boxes $B_{R_\tau}$, $B_{R_\sigma}$
(cf. Definition~\ref{def:admissibility}).
Assume $ \eta\, {\rm dist}(B_{R_\tau},B_{R_\sigma}) \geq {\rm diam}(B_{R_\tau})$ 
for some admissibility parameter $\eta > 0$. 
Fix $q \in (0,1)$. 
Then, for each $k\in\mathbb{N}$ there exists a space
$W_k\subset S^{p,1}({\mathcal T}_h)$ with $\dim W_k\leq C_{\rm dim} (2+\eta)^d q^{-d}k^{d+1}$ 
such that for arbitrary $f \in L^2(\Gamma)$ with
$\supp  f  \subset B_{R_\sigma}\cap\Gamma$, the solution $\phi_h$ of \eqref{eq:modelHS} 
satisfies 
\begin{equation}
\label{eq:thm:function-approximation-1HS}
\min_{w \in W_k} \|\phi_h - w\|_{L^2(B_{R_{\tau}}\cap\Gamma)} 
\leq C_{\rm box}  h^{-1/2} q^k \|\Pi^{L^2} f\|_{L^2(\Gamma)}
\leq C_{\rm box}  h^{-1/2} q^k \|f\|_{L^2(\Gamma)}.
\end{equation}
The constants $C_{\rm dim}$,  $C_{\rm box}>0$ depend only on $\Omega$, $d$, $p$, 
and the $\gamma$-shape regularity of the quasiuniform triangulation $\mathcal{T}_h$.
\end{theorem}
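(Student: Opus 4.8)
The strategy is to approximate $\phi_h$ on $B_{R_\tau}\cap\Gamma$ by exploiting that $\phi_h$, viewed through the potential $\widetilde K$, is (the trace of) a function which is harmonic away from $\supp f \subset B_{R_\sigma}\cap\Gamma$. Concretely, I would first recall that the Galerkin solution $\phi_h$ can be characterized via orthogonality: writing $u := \widetilde K \phi_h$ for the double-layer potential, $u$ solves a transmission problem and is harmonic in $\R^d\setminus\supp f$, in particular on a neighbourhood of $B_{R_\tau}$ since $\operatorname{dist}(B_{R_\tau},B_{R_\sigma})>0$. This is the point where the $\eta$-admissibility hypothesis enters: it gives us a concentric enlargement $\widehat B$ of $B_{R_\tau}$, with $\operatorname{diam}(\widehat B)\lesssim (1+\eta)\operatorname{diam}(B_{R_\tau})$, on which $u$ is still harmonic, with a fixed fraction of ``room'' between $\widehat B\cap\Gamma$ and $B_{R_\sigma}$. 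I expect this step to mirror closely the corresponding construction in \cite{FMPBEM} for the single-layer operator, with the conormal derivative replacing the trace.

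The core of the argument is then a Caccioppoli-type interior regularity estimate combined with an iterated low-dimensional approximation, exactly in the spirit of the single-layer case \cite{FMPBEM} and the FEM case \cite{FMPFEM}. The plan is: partition the enlarged box $\widehat B$ into $O(k^d)$ congruent subboxes of side length $\sim R_\tau/k$; on each subbox use that $u$ is harmonic and apply a Caccioppoli inequality to bound higher Sobolev norms of $u$ by $L^2$-norms on slightly larger subboxes, which yields that the restriction of $\nabla u\cdot n$ (hence of $\phi_h$, up to the jump relations and an $L^2$-projection onto $S^{p,1}$) lies within $q^k$ of a space of dimension $O(k^{d+1})$ — the extra power of $k$ comes from the polynomial degree of the local approximation needed to gain one factor of $q$ per step. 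One subtlety compared with \cite{FMPBEM} is that here we discretize with continuous piecewise polynomials of arbitrary fixed degree $p\ge 1$ rather than just $p=1$; so the local approximation spaces $W_k$ must be taken as $\Pi^{L^2}$ of piecewise polynomials on the subboxes, and one must check that $\dim W_k$ still scales like $k^{d+1}$ with the constant absorbing $p$ and the shape-regularity. The factor $h^{-1/2}$ in \eqref{eq:thm:function-approximation-1HS} arises from converting the $H^{1/2}(\Gamma)$-type stability bound \eqref{eq:discrete-stability} into an $L^2$-bound via an inverse inequality (Lemma~\ref{lem:inverseinequality}), together with the discrete stability estimate that controls $\|\phi_h\|$ by $\|\Pi^{L^2}f\|_{L^2(\Gamma)}$.

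In more detail, I would structure the proof as: (i) set $u=\widetilde K\phi_h$ and record that $Wu$'s Galerkin orthogonality plus the saddle-point constraint give control of $u$ in $H^1$ on $\R^d\setminus B_{R_\sigma}$, with norm $\lesssim h^{-1/2}\|\Pi^{L^2}f\|_{L^2(\Gamma)}$ using \eqref{eq:discrete-stability} and \eqref{eq:basisisomorphism}; (ii) establish the one-step approximation lemma: if $v$ is harmonic on a box $Q$ then for any $\delta\in(0,1)$ there is a subspace of piecewise polynomials on a $\delta$-shrunk box approximating $v$ to accuracy $Cq\|v\|_{L^2(Q)}$ with dimension bounded independently of the box, using Caccioppoli plus a Bramble–Hilbert / polynomial approximation argument; (iii) iterate this $k$ times on the nested boxes between $B_{R_\tau}$ and $\widehat B$, each step contracting by $q$ and multiplying the dimension by a fixed factor depending on $q$ and $\eta$, to reach accuracy $q^k$ with total dimension $O((2+\eta)^d q^{-d} k^{d+1})$; (iv) translate the approximation of $u$ near $B_{R_\tau}$ into an approximation of $\phi_h = [\partial_n u]$ restricted to $B_{R_\tau}\cap\Gamma$, apply $\Pi^{L^2}$ to land in $S^{p,1}(\mathcal T_h)$, and collect constants. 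The main obstacle I anticipate is step (iv) together with the trace bookkeeping in step (i): unlike the single-layer operator where one works directly with the solution density, here the natural ``harmonic object'' is the potential $u$ and one recovers $\phi_h$ only through the normal-derivative jump, so some care is needed to ensure that restricting, enlarging boxes, and projecting onto $S^{p,1}(\mathcal T_h)$ are all compatible and that no uncontrolled constants (in particular no hidden $h$-dependence beyond the stated $h^{-1/2}$) creep in. A secondary technical point is handling the saddle-point term $b(\psi,\lambda)$: one must verify that the Lagrange multiplier $\lambda$ does not spoil the harmonicity/regularity argument, which should follow because $b(\psi,\lambda)$ only affects $\phi_h$ through a rank-one (constant) correction that is trivially in any reasonable approximation space.
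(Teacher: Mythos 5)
Your overall architecture (potential $u=\widetilde K\phi_h$, enlarged concentric box from admissibility, Caccioppoli plus iterated low-dimensional coarse approximation, $O(k^{d+1})$ dimension count) matches the paper's, but two of your load-bearing claims are wrong, and the second one is exactly where the real work of the proof lies. First, the jump relations for the double layer potential are $[\gamma_0 u]=\phi_h$ and $[\partial_n u]=0$; you assert $\phi_h=[\partial_n u]$, which is the single-layer relation. The density is recovered from the jump of the \emph{Dirichlet} trace, and the $h^{-1/2}$ in \eqref{eq:thm:function-approximation-1HS} is produced at that recovery step by a multiplicative trace inequality bounding $\|[\gamma_0 z]\|_{L^2(B_R\cap\Gamma)}$ by $R\,h^{-1/2}\triplenorm{z}_{h,R}$ (end of the proof of Lemma~\ref{cor:lowdimappHS}), not by converting the $H^{1/2}$-stability bound via an inverse inequality.

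Second, and more seriously, $u=\widetilde K\phi_h$ is \emph{not} harmonic on a neighbourhood of $B_{R_\tau}$: the density $\phi_h$ is supported on all of $\Gamma$, so $u$ has the jump $[\gamma_0 u]=\phi_h$ across every part of $\Gamma$, including $\Gamma\cap B_{R_\tau}$, and is only piecewise harmonic on $B\setminus\Gamma$. (Harmonicity away from $\supp f$ would hold for a Newton potential of $f$, which is not the object here.) Consequently your step (ii) — a plain Caccioppoli inequality for functions harmonic on a box — does not apply to the boxes that matter, since $B_{R_\tau}$ straddles $\Gamma$. What is available near $B_{R_\tau}$ is only the \emph{discrete} orthogonality \eqref{eq:orthogonalityHS}, and the paper's Lemma~\ref{lem:CaccioppoliHS} derives a Caccioppoli-type bound from it by testing with $\eta^2[\gamma_0 v]-I_h(\eta^2[\gamma_0 v])$, using the nodal interpolant, inverse estimates, a super-approximation property of the cut-off, and Lemma~\ref{lem:estimateun} to make sense of $\partial_n v$ in $L^2$ on the smaller box. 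None of this is replaceable by interior regularity for harmonic functions. Relatedly, the Lagrange multiplier is not ``a rank-one correction trivially contained in the approximation space'': $\lambda$ enters the orthogonality itself and propagates through the Caccioppoli estimate as an additive term $((1+\delta)R)^{(d-1)/2}|\mu|$, which must be carried through the first step of the iteration (and is killed afterwards because the increments lie in $\H_{h,0}(\cdot,\cdot,0)$). Finally, your plan omits the degenerate case in which $h/R_\tau$ is too large for the iteration hypothesis \eqref{eq:cor:lowdimapp-1HS}; the paper handles it by taking $W_k$ to be the full restricted space $S^{p,1}(\T_h)|_{B_{R_\tau}\cap\Gamma}$ and checking that its dimension still fits the claimed bound.
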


The proof of Theorem~\ref{thm:function-approximationHypSing} will be given at the end 
of this section. Its main ingredients can be summarized as follows:
First, the double-layer potential
$$
u(x) := \widetilde{K}\phi_h(x) = \int_{\Gamma}\gamma_{1,y}^{\text{int}} G(x-y)\phi_h(y)ds_y, 
\qquad x \in \R^d \setminus \Gamma,
$$ 
generated by the solution $\phi_h$ of \eqref{eq:modelHS} 
is harmonic on $\Omega$ as well as on $\Omega^c := \R^d \setminus \overline{\Omega}$ and 
satisfies the jump conditions 
\bean\label{eq:jumpconditions}
[\gamma_0 u] &:=& \gamma_0^{\text{ext}}u - \gamma_0^{\text{int}}u = \phi_h \in H^{1/2}(\Gamma),\nonumber \\
\unjump &:=& \gamma_1^{\text{ext}}u - \gamma_1^{\text{int}}u = 0 \in H^{-1/2}(\Gamma).
\eean
Here, $\gamma_0^{\text{ext}},\gamma_0^{\text{int}}$ denote the exterior and interior trace operator and 
$\gamma_1^{\text{ext}},\gamma_1^{\text{int}}$ the exterior and interior conormal derivative, 
see, e.g., \cite{SauterSchwab}.
Hence, the potential $u$ is in a space of piecewise harmonic functions, where 
the jump across the boundary is a continuous piecewise polynomial of degree $p$,
and the jump of the normal derivative vanishes. 
These properties will characterize the spaces ${\mathcal H}_h(D)$ to be introduced below. 
The second observation is an orthogonality condition on admissible blocks $(\tau,\sigma)$. 
For right-hand sides $f$ with $\supp f  \subset B_{R_{\sigma}}\cap \Gamma$, 
equation \eqref{eq:modelHS}, the admissibility condition, and $W = - \gamma_1^{\text{int}}\widetilde{K}$ imply 
\begin{equation}
\label{eq:orthogonalityHS}
-\skp{\gamma_1^{\text{int}} u,\psi_{h}} + \lambda\skp{\psi_{h},1} = 0
\quad \forall \psi_{h} \in S^{p,1}({\mathcal T}_{h}) \, \text{with} \, \supp\psi_{h} \subset B_{R_{\tau}}\cap\Gamma. 
\end{equation}
For a cluster $\rho \subset \mathcal{I}$, we define $\Gamma_{\rho} \subset \Gamma$ as an open polygonal
manifold given by 
\been\label{eq:screen}\Gamma_{\rho} := {\rm interior} \left(\bigcup_{j\in\rho}\supp \psi_{j}\right).\een

Let $D$ be an open set and $D^- := D\cap\Omega$, $D^+ := D\cap\overline{\Omega}^c$. A function
$v \in H^1(D\setminus\Gamma)$
is called piecewise harmonic, if
$$
\int_{D\setminus\Gamma}\nabla v \cdot \nabla \varphi\, dx = 0 \quad \forall\varphi \in C_0^{\infty}(D^{\pm}).
$$

\begin{definition}

Let $D \subset \R^{d}$ be open. The restrictions 
of the interior and exterior trace operators $\gamma_0^{\text{int}}$, $\gamma_0^{\text{ext}}$ to 
$D \cap \Gamma$ are operators $\gamma_0^{\text{int}}|_{D \cap \Gamma}:H^1(D^-) \rightarrow L^2_{loc} (D \cap \Gamma)$ 
and $\gamma_0^{\text{ext}}|_{D \cap \Gamma}:H^1(D^+) \rightarrow L^2_{loc} (D \cap \Gamma)$ 
defined in the following way: 
For any (relative) compact $U \subset D \cap \Gamma$, one selects a cut-off function $\eta \in C^\infty_0(D)$ 
with $\eta \equiv 1$ on $U$. 
Since $u \in H^1(D^-)$ implies $\eta u \in H^1(\Omega)$, we have 
$\gamma_0^{\text{int}} \eta u \in H^{1/2}(\Gamma)$ and thus its restriction to $U$ is a well-defined 
function in $L^2(U)$. It is easy to see that the values on $U$ do not depend on the choice of $\eta$. 
The operator $\gamma_0^{\text{ext}} |_{D\cap\Gamma}$ is defined 
completely analogously. 

%In the same way, the restriction of the exterior trace operator $\gamma_0^{\rm ext}|_{D\cap\Gamma}$ can be defined
%and we set $[\gamma_0 v]|_{D\cap\Gamma}:= \gamma_0^{\text{ext}}|_{D\cap\Gamma} -\gamma_0^{\text{int}}|_{D\cap\Gamma}$
%as a function in $L^2_{\rm loc}(D\cap\Gamma)$. 

In order to define the restriction of the normal derivative of a piecewise harmonic function
$v \in H^1(D\setminus\Gamma)$, let $\eta \in C^{\infty}(\R^d)$
with $\supp \eta \subset D$ and $\eta \equiv 1$ on a compact set
$U\subset D$. Then, the exterior normal derivative $\partial_n(\eta v)$ is well defined
as a functional in $H^{-1/2}(\Gamma)$, and 
we define $\partial_n v|_{U}$ as the functional
\bee
\skp{\partial_n v|_{U},\varphi} = \skp{\partial_n(\eta v),\varphi}, \quad \forall \varphi \in H^{1/2}(\Gamma), 
\supp \varphi \subset U.
\ee
Again, this definition does not depend on the choice of $\eta$ as long as $\eta \equiv 1$ on $U$. 
\end{definition}

\begin{definition}
For a piecewise harmonic function $v \in H^1(D\setminus\Gamma)$, 
we define the jump of the normal derivative $\vnjump|_{D\cap\Gamma}$ on $D\cap\Gamma$ as the functional
\been\label{eq:unjumpdef}
\skp{\vnjump|_{D\cap\Gamma},\varphi} := \int_{D^+\cup D^-}\nabla v \cdot \nabla \varphi\, dx \quad \forall \varphi \in H^1_0(D). 
\een
\end{definition}
We note that the value $\skp{\vnjump|_{D\cap\Gamma},\varphi}$ depends only on $\varphi|_{D\cap \Gamma}$ 
in the sense that $\skp{\vnjump|_{D\cap\Gamma},\varphi} = 0$ for all 
$\varphi \in C_0^{\infty}(D)$ with $\varphi|_{D\cap\Gamma} = 0$.
Moreover, if $\vnjump|_{D\cap\Gamma}$ is a function in $L^2(D\cap\Gamma)$, then it is unique.
The definition \eqref{eq:unjumpdef} is consistent with \eqref{eq:jumpconditions} in the following sense:
For a potential $\widetilde{K}\phi_h$ with $\phi_h \in S^{p,1}(\T_h)$, we have the jump condition
$[\partial_n \widetilde{K} \phi_h]|_{D\cap\Gamma} = 0$.
%Moreover, we define the mean of the jump of the normal derivative of a piecewise harmonic function
%$v \in H^1(D\setminus\Gamma)$ as the functional $\{\partial_n v\}|_{D\cap\Gamma}$ given by
%\been\label{eq:meanjumpdef}
%2\skp{\{\partial_n v\}|_{D\cap\Gamma},[\gamma_0\varphi]} := \int_{D^+\cup D^-} \nabla v \cdot \nabla \varphi dx \quad \forall \varphi \in 
%C^{\infty}(\overline{\Omega})\cap C^{\infty}(\overline{\Omega^c}), \; \supp \varphi \subset \overline{D}.
%\een
%For piecewise harmonic functions $v$ with $\vnjump|_{D\cap\Gamma} = 0$, we get that
%$\{\partial_n v\}|_{D\cap\Gamma} = \partial_n v|_{D\cap\Gamma}$.

%For a piecewise harmonic function $v$, we can define the outer normal derivative $\partial_n v \in H^{-1/2}(\Gamma)$ 
%as the functional
%$$
%\skp{\partial_n v,\gamma_0 \varphi} := \int_{D^+}\nabla v \cdot \nabla \varphi dx 
%\quad \forall\varphi \in H^1(D^+), \supp \varphi \subset D^+.
%$$
With these observations, we can define the space
\begin{eqnarray*}
\mathcal{H}_h(D) &:=& \{v\in H^1(D\setminus\Gamma) \colon 
\text{$v$ is piecewise harmonic}, [\partial_n v]|_{D\cap\Gamma} = 0, \\
& &\phantom{v\in H^1(D\setminus\Gamma) \colon \;\,} \exists \widetilde{v} \in S^{p,1}({\mathcal T}_h) \;
\mbox{s.t.} \ 
\vjump|_{D\cap\Gamma} = \widetilde{v}|_{D\cap\Gamma}\}.
\end{eqnarray*}
The potential $u = \widetilde{K}\phi_h$ indeed satisfies $u \in \H_h(D)$ for any domain $D$; 
we will later take $D$ to be a box $B_R$. 

%In the following, we will employ the $L^2(\Gamma)$-orthogonal projection
%$\Pi^{L^2}: L^2(\Gamma)\ra S^{p,1}(\T_h)$ defined by 
%\been\label{eq:L2projection}
%\skp{\Pi^{L^2}v,\psi_h} = \skp{v,\psi_h} \quad \forall \psi_h \in S^{p,1}(\T_h).
%\een

For a box $B_R$ with side length $R$, we introduce the following norm on $H^1(B_R\setminus\Gamma)$
\begin{equation*}
\triplenorm{v}_{h,R}^2 := \left(\frac{h}{R}\right)^2 \norm{\nabla v}^2_{L^2(B_R\setminus\Gamma)} + 
\frac{1}{R^2}\norm{v}_{L^2(B_R\setminus\Gamma)}^2,
\end{equation*}
which is, for fixed $h$, equivalent to the $H^1(B_R\setminus\Gamma)$-norm. \\

A main tool in our proofs is the nodal interpolation operator 
$I_h: C(\Gamma) \rightarrow S^{p,1}({\mathcal T}_h)$. 
Since $p+1>\frac{d-1}{2}$ (recall: $d \in \{2,3\}$), the interpolation operator $I_h$ has
the following local approximation property for continuous, $\mathcal{T}_h$-piecewise $H^{p+1}$-functions 
$u \in C(\Gamma)\cap H^{p+1}_{\text{pw}}(\mathcal{T}_h) := 
\{u\in C(\Gamma) : u|_{T} \in H^{p+1}(T)\, \forall \, T \in \mathcal{T}_h\}$:
\begin{equation}\label{eq:NIapprox}
\norm{u-I_h u}_{H^m(T)}^2 \leq C h^{2(p+1-m)} \left\vert{u}\right\vert_{H^{p+1}(T)}^2, 
\; 0 \leq m \leq p+1.
\end{equation}
The constant $C>0$ depends only on $\gamma$-shape regularity of the quasiuniform triangulation $\mathcal{T}_h$, 
the dimension $d$, and the polynomial degree $p$. 

%---------------------------------
%---------------------------------
\begin{comment}
In the following, we will repeatedly employ the Scott-Zhang projection 
\begin{equation*} I_h: H^1(\Gamma) \rightarrow S^{p,1}({\mathcal T}_h) \end{equation*}
introduced in \cite{ScottZhang}.  
By $\omega_T := \bigcup\left\{T' \in \mathcal{T}_h \;:\; T \cap T' \neq \emptyset\right\}$, we denote the element patch of $T$, 
which contains $T$ and all elements $T' \in \mathcal{T}_h$ that share a node with $T$.
For $\mathcal{T}_h$-piecewise $H^{\ell}$-functions 
$v \in H^{\ell}_{\text{pw}}(\Gamma) := \left\{u \in L^2(\Gamma): u|_T \in H^{\ell}(T)\, \forall T\in \T_h  \right\}$, 
the operator $I_h$ has the well-known 
local approximation properties
\begin{equation}\label{eq:SZapprox}
\norm{v-I_h v}_{H^m(T)}^2 \leq C h^{2(\ell-m)}\! \sum_{T' \subset \omega_T}\abs{v}_{H^{\ell}(T')}^2, 
\quad 0 \leq m \leq 1, \ m \leq \ell \leq p+1.
\end{equation}
The constant $C>0$ depends only on the $\gamma$-shape regularity of the quasiuniform triangulation $\mathcal{T}_h$, the dimension $d$,
and the polynomial degree $p$. %Moreover, the Scott-Zhang projection preserves zero boundary conditions,
Moreover, the approximation property from above together with the triangle inequality imply the $L^2$-stability 
estimate 
\been\label{eq:L2stabilitySZ}
\norm{I_h v}_{L^2(T)} \leq C \norm{v}_{L^2(\omega_T)} \quad \forall v \in H^1(\Gamma).
\een
\end{comment}
%---------------------------------
%---------------------------------

In the following, we will approximate the Galerkin solution on 
certain nested boxes, which are concentric according to the following definition.
\begin{definition}
Two (open) boxes $B_R$, $B_{R^\prime}$
are said to be concentric boxes with side lengths $R$ and $R^\prime$, if 
they have the same barycenter and $B_R$ can be obtained by a stretching
of $B_{R^\prime}$ by the factor $R/R^\prime$ taking their common barycenter
as the origin. 
\end{definition}

The following lemma states two classical inverse inequalities for functions in $\mathcal{S}^{p,1}(\T_h)$,
which will repeatedly be used in this section. For a proof we refer to {\cite[Theorem 3.2]{GHS}}
and \cite[Theorem 4.4.2]{SauterSchwab}.

%\blem[{\cite[Theorem 4.1]{GHS}}]\label{lem:inverseinequality}
%Let $0\leq s < p+\frac{1}{2}$. Then, there is a constant depending only on $d$, the $\gamma$-shape 
%regularity of $\T_h$, and $p$ such that 
%\been\label{eq:inverse}
%\norm{v}_{H^{s}(T)}\leq C h^{-s}\norm{v}_{L^2(T)} \quad \forall v \in \mathcal{P}^{p}(T).
%\een
%\elem
\blem\label{lem:inverseinequality}
There is a constant $C > 0$ depending only on $\Omega,d,p$, and the $\gamma$-shape 
regularity of the quasiuniform triangulation $\T_h$ such that for all $s \in [0,1]$ the inverse inequality
\been\label{eq:inverse}
\norm{v}_{H^{s}(\Gamma)}\leq C h^{-1/2}\norm{v}_{L^2(\Gamma)} \quad \forall v \in S^{p,1}(\T_h) 
\een
holds.
Furthermore, for $0\leq m\leq \ell$ the inverse estimate 
\been\label{eq:inverse2}
\norm{v}_{H^{\ell}(T)} \leq C h^{m-\ell} \norm{v}_{H^{m}(T)}, \quad \forall v \in \mathcal{P}^{p}(T)
\een
holds for all $T \in \T_h$, where the constant
$C>0$ depends only on $\Omega,p,\ell$ and the $\gamma$-shape 
regularity of the quasiuniform triangulation $\T_h$.
\elem
The following lemma shows that for piecewise harmonic functions, the restriction of the 
normal derivative is a function in $L^2$ on a smaller box, and provides an estimate 
of the $L^2$-norm of the normal derivative.
\blem 
\label{lem:estimateun}
Let $\delta \in (0,1)$, $R \in(0,2\operatorname{diam}(\Omega))$ be such that 
$\frac{h}{R}\leq \frac{\delta}{4}$, and let $\mu \in \R$.
%and let $\Gamma_{\rho}\subset \Gamma$ be of the form \eqref{eq:screen}. 
Let $B_R$, $B_{(1+\delta)R}$ be two concentric boxes of 
side lengths $R$ and $(1+\delta)R$. 
Then, there exists a constant $C> 0$ depending only on $\Omega$, $d,p$, and the $\gamma$-shape regularity
of the quasiuniform triangulation $\T_h$, such that for 
all $v \in \H_{h}(B_{(1+\delta)R})$ we have
\begin{eqnarray}
\label{eq:lem:estimateunjump-ii}
%\norm{\unjump}_{L^2(\Gamma\cap (B_{(1+\delta/2)R}\setminus B_{R}))} \
\norm{\partial_n v}_{L^2(B_{R}\cap \Gamma)} \
\leq C h^{-1/2}\left(\norm{\nabla v}_{L^2(B_{(1+\delta)R}\setminus\Gamma)}+
\frac{1}{\delta R}\norm{v}_{L^2(B_{(1+\delta)R}\setminus\Gamma)}\right).
\end{eqnarray}
\elem
\bpro 

{\em 1.~step:} 
Let $\eta\in W^{1,\infty}({\mathbb R}^d)$ satisfy $0\leq\eta\leq 1$, $\eta \equiv 1$ on 
$B_{(1+\delta/2)R}$, $\supp \eta \subset B_{(1+\delta)R}$,
and $\norm{\nabla \eta}_{L^{\infty}(B_{(1+\delta)R})} \lesssim \frac{1}{\delta R}$.
In order to shorten the proof, we assume
$\gamma_0^{\rm int} \eta = \gamma_0^{\rm ext} \eta \in S^{1,1}(\mathcal{T}_h)$ so that inverse inequalities
are applicable. We mention in passing that this simplification could be avoided by using 
``super-approximation'', a technique that goes back to \cite{nitsche-schatz74} 
(cf., e.g., \cite[Assumption~{7.1}]{wahlbin91}). Let us briefly indicate, how the assumption 
$\eta \in S^{1,1}(\T_h)$ can be ensured: Start from a smooth cut-off function 
$\widetilde \eta \in C^\infty_0({\mathbb R}^d)$ with the desired support properties. Then, the 
piecewise linear interpolant $I^1_h \widetilde\eta \in S^{1,1}(\T_h)$ has the 
desired properties on $\Gamma$. It therefore suffices to construct a suitable lifting. 
This is achieved with the lifting operator described in \cite[Chap.~{VI}, Thm.~3]{stein70} and afterwards
a multiplication by a suitable cut-off function again.

{\em 2.~step:}
Let $z := \widetilde{K}(\gamma_0^{\rm int}\eta [v])$. Then with the jump conditions
\bee
[\partial_n z] = 0, \quad [z] = \gamma_0^{\rm int}\eta [v]
\ee
and the fact that $v$ is piecewise harmonic, we get that the function $v-z$ is harmonic 
in the box $B_{(1+\delta/2)R}$. Thus, the function $w:= \nabla(v-z)$ is harmonic in $B_{(1+\delta/2)R}$ as well.
It therefore satisfies the interior regularity (Caccioppoli) estimate
\been\label{eq:intreg}
\norm{\nabla w}_{L^2(B_{(1+\delta/4)R}\setminus\Gamma)} \lesssim \frac{1}{\delta R} 
\norm{w}_{L^2(B_{(1+\delta/2)R}\setminus\Gamma)};
\een
a short proof of this Caccioppoli inequality can be found, for example, in \cite{Bebendorf}.

We will need a second smooth cut-off function $\widetilde{\eta}$ with $0\leq\widetilde{\eta}\leq 1$,
$\widetilde{\eta} \equiv 1$ on 
$B_{R}$, and $\supp \widetilde{\eta}   \subset B_{(1+\delta/4)R}$ and 
$\norm{\nabla \widetilde{\eta}}_{L^{\infty}(B_{(1+\delta/4)R})} \lesssim \frac{1}{\delta R}$.
The multiplicative trace inequality, see, e.g., \cite{BrennerScott}, 
implies together with \eqref{eq:intreg}
and $\delta R \leq 2 \diam(\Omega)$ due to the assumptions on $\delta,R$ that
\bea
\norm{\widetilde{\eta}w}_{L^2(B_R \cap \Gamma)}^2 &\lesssim& 
\norm{\widetilde{\eta}w}_{L^2(B_{(1+\delta/4)R}\setminus\Gamma)}^2
+ \norm{\widetilde{\eta}w}_{L^2(B_{(1+\delta/4)R}\setminus\Gamma)}
\norm{\nabla(\widetilde{\eta}w)}_{L^2(B_{(1+\delta/4)R}\setminus\Gamma)} \\
&\lesssim& \norm{\widetilde{\eta}w}_{L^2(B_{(1+\delta/4)R}\setminus\Gamma)}^2
+ \norm{\widetilde{\eta}w}_{L^2(B_{(1+\delta/4)R}\setminus\Gamma)}
\left(\frac{1}{\delta R}\norm{w}_{L^2(B_{(1+\delta/4)R}\setminus\Gamma)}
+\norm{\nabla w}_{L^2(B_{(1+\delta/4)R}\setminus\Gamma)}\right) \\
&\lesssim& \frac{1}{\delta R}\norm{w}_{L^2(B_{(1+\delta/2)R}\setminus\Gamma)}^2.
\eea
Therefore and with $\partial_n z = \partial_n\widetilde{K}(\gamma_0^{\rm int}\eta [v])
= -W(\gamma_0^{\rm int}\eta [v])$, we can estimate the normal derivative of $v$ by
\bea
\norm{\partial_n v}_{L^2(B_R\cap\Gamma)} &\leq& \norm{w\cdot n}_{L^2(B_R\cap\Gamma)} + 
\norm{\partial_n z}_{L^2(B_R\cap\Gamma)} \\
&\lesssim& \frac{1}{\sqrt{\delta R}}\norm{w}_{L^2(B_{(1+\delta/2)R}\setminus\Gamma)} + 
\norm{W(\gamma_0^{\rm int}\eta[v])}_{L^2(B_R\cap\Gamma)}.
\eea
Since the hyper-singular integral operator is a continuous mapping from $H^{1}(\Gamma)$ to $L^2(\Gamma)$ and the
double layer potential is continuous from $H^{1/2}(\Gamma)$ to $H^{1}(\Omega)$ (see, e.g., 
\cite[Remark 3.1.18.]{SauterSchwab}), we get
with $h<\delta R$, the inverse inequality \eqref{eq:inverse} (note that 
$(\gamma_0^{\rm int}\eta) [v]$ is a piecewise polynomial), and the trace inequality 
\bea
\norm{\partial_n v}_{L^2(B_R\cap\Gamma)} &\lesssim& 
\frac{1}{\sqrt{\delta R}}\norm{w}_{L^2(B_{(1+\delta/2)R}\setminus\Gamma)} + 
\norm{(\gamma_0^{\rm int}\eta)[v]}_{H^1(\Gamma)} \\
&\lesssim& \frac{1}{\sqrt{\delta R}}\left(\norm{\nabla v}_{L^2(B_{(1+\delta/2)R}\setminus\Gamma)}+
\norm{\nabla z}_{L^2(B_{(1+\delta/2)R}\setminus\Gamma)}\right)
 +h^{-1/2}\norm{(\gamma_0^{\rm int}\eta)[v]}_{H^{1/2}(\Gamma)} \\
&\lesssim& h^{-1/2}\left(\norm{\nabla v}_{L^2(B_{(1+\delta/2)R}\setminus\Gamma)}+
\norm{(\gamma_0^{\rm int}\eta)[v]}_{H^{1/2}(\Gamma)}\right) \\
&\lesssim& h^{-1/2}\left(\norm{\nabla v}_{L^2(B_{(1+\delta/2)R}\setminus\Gamma)}+
\norm{\eta v}_{H^{1}(B_{(1+\delta)R}\setminus\Gamma)}\right) \\
&\lesssim& h^{-1/2}\left(\norm{\nabla v}_{L^2(B_{(1+\delta)R}\setminus\Gamma)}+
\frac{1}{\delta R}\norm{v}_{L^{2}(B_{(1+\delta)R}\setminus\Gamma)}\right), \\
\eea
which finishes the proof.
\epro

The previous lemma implies that for functions in $\H_h(B_{(1+\delta)R})$, the normal derivative 
is a function in $L^2(B_R\cap\Gamma)$. 
Together with the orthogonality properties that we have identified 
in \eqref{eq:orthogonalityHS}, this is captured by the following affine space 
$\H_{h,0}(D,\Gamma_{\rho},\mu)$: 
\begin{eqnarray}
\H_{h,0}(D,\Gamma_{\rho},\mu)&:=& {\mathcal H}_h(D) \cap 
\{v \in H^1(D\setminus\Gamma)\colon \supp\vjump|_{D\cap\Gamma} \subset \overline{\Gamma_{\rho}}, \\
& & \phantom{{\mathcal H}^1_h(D) \cap \{} \langle \partial_n v|_{D\cap\Gamma},\psi_h\rangle -
\mu\skp{\psi_h,1} = 0
\, \forall \psi_h \in S^{p,1}({\mathcal T}_h) \, \text{with} \, 
\supp \psi_h \subset D\cap \overline{\Gamma_{\rho}}\}. \nonumber
%\nonumber \\
%& & \phantom{{\mathcal H}^1_h(D)} \cap \{v\in H^1(D\setminus\Gamma)\,:\, \skp{\vjump,1}=0
\end{eqnarray}

% Lemma: Local harmonic functions are closed subspace
\blem\label{lem:closedsubspaceHS}
The spaces $\H_h(D)$ and $\H_{h,0}(D,\Gamma_{\rho},\mu)$ are closed subspaces of $H^1(D\setminus \Gamma)$.
\elem
\bpro
Let $(v^j)_{j\in \N} \subset \H_h(D)$ be a sequence converging to $v \in H^1(D\setminus \Gamma)$. 
With the definition of the jump $[\gamma_0 v^j]|_{D\cap\Gamma}$ and the continuity of the trace operator from
$H^1(\Omega)$ to $L^2(\Gamma)$, we get that the sequence $[\gamma_0 v^j]|_{D\cap\Gamma}$ converges in 
$L^2_{\rm loc}(D\cap\Gamma)$ to $[\gamma_0 v]|_{D\cap\Gamma}$, 
and since $S^{p,1}(\T_h)$ is finite dimensional, we get that 
$[\gamma_0 v]|_{D\cap\Gamma} = \widetilde{v}|_{D\cap\Gamma}$ with a function $\widetilde{v} \in S^{p,1}(\T_h)$.

Moreover, for $\varphi \in C_0^{\infty}(D^{\pm})$ we have
\bee
\skp{\nabla v,\nabla \varphi}_{L^2(D\setminus\Gamma)} = 
\lim_{j\ra \infty} \skp{\nabla v^j, \nabla \varphi}_{L^2(D\setminus\Gamma)} = 0,
%\lim_{j\ra \infty} \skp{\partial_n v^j,\varphi}_{L^2(D\cap\Gamma)}=\skp{\partial_n v,\varphi}_{L^2(D\cap\Gamma)},
\ee
so $v$ is piecewise harmonic on $D\setminus\Gamma$. By definition \eqref{eq:unjumpdef} and the same argument,
we get
$[\partial_n v]|_{D\cap\Gamma} = 0$, and therefore $\H_h(D)$ is closed.
The space $\H_{h,0}(D,\Gamma_{\rho},\mu)$ is closed, since the intersection of closed
spaces is closed.
\epro

A key ingredient of the proof of Theorem~\ref{thm:function-approximationHypSing} is
a Caccioppoli-type interior regularity estimate, which is proved by use of the orthogonality 
property \eqref{eq:orthogonalityHS}. 

% Lemma: Caccioppoli type estimate
\blem\label{lem:CaccioppoliHS}
Let $\delta \in (0,1)$, $R\in(0,2\diam(\Omega))$ such that 
$\frac{h}{R} \leq \frac{\delta}{8}$ 
and let $\Gamma_{\rho}\subset \Gamma$ be of the form \eqref{eq:screen}.
Let $B_R$, $B_{(1+\delta)R}$ be two concentric boxes and let $\mu \in \R$.
Then, there exists a constant $C > 0$ depending only on 
$\Omega$, $d$, $p$, and the $\gamma$-shape regularity of the quasiuniform triangulation $\T_h$ such that for all 
$v \in \H_{h,0}(B_{(1+\delta)R},\Gamma_{\rho},\mu)$ 
\been\label{eq:caccioppoliHS}
\norm{\nabla v}_{L^2(B_{R}\setminus \Gamma)} \leq C\left(\frac{1+\delta}{\delta}  \triplenorm{v}_{h,(1+\delta)R}
+((1+\delta)R)^{(d-1)/2}\abs{\mu}\right).
\een
\elem

\bpro
Let $\eta \in H^1(\R^d)$ be a cut-off function with $\supp \eta \subset B_{(1+\delta/2)R}$, 
$\eta \equiv 1$ on $B_{R}$, and $\norm{\nabla \eta}_{L^{\infty}(B_{(1+\delta)R})} \lesssim \frac{1}{\delta R}$. 
As in the proof of Lemma~\ref{lem:estimateun}, we may additionally assume that
$\gamma_0^{\rm int} \eta = \gamma_0^{\rm ext} \eta$ is a piecewise polynomial of degree 1 on 
each connected component of $\Gamma\cap B_{(1+\delta)R}$.
%We will need a second cut-off function $\widetilde{\eta} \in C^{\infty}(\R^d)$ 
%with $\supp \widetilde{\eta} \subset B_{(1+\delta)R}$, 
%$\widetilde{\eta} \equiv 1$ on $B_{(1+\delta/2)R}$ and $\norm{\nabla \widetilde{\eta}}_{L^{\infty}(B_{(1+\delta)R})} \lesssim \frac{1}{\delta R}$.
Since $h$ is the maximal element diameter, $8h \leq \delta R$ implies $T \subset B_{(1+\delta)R}$ 
for all $T \in \T_h$ with $T \cap \supp \eta  \neq \emptyset$.
Because $v$ is piecewise harmonic and $[\partial_n v]|_{B_{(1+\delta)R}\cap\Gamma}=0$, we get
\bean\label{eq:CaccHS1}
\norm{\nabla(\eta v)}_{L^2(B_{(1+\delta)R}\setminus \Gamma)}^2 &=& 
\int_{B_{(1+\delta)R}\setminus\Gamma}\nabla v \cdot \nabla(\eta^2 v)+v^2 \abs{\nabla \eta}^2 dx \nonumber  \\
&=&\langle\partial_n v, \eta^2 \vjump\rangle + \int_{B_{(1+\delta)R}\setminus\Gamma}{v^2\abs{\nabla \eta}^2  dx}. 
\eean
We first focus on the surface integral. 
%With the Scott-Zhang projection $I_h$ from \eqref{eq:SZapprox} 
With the nodal interpolation operator $I_h$ from \eqref{eq:NIapprox} 
and the orthogonality \eqref{eq:orthogonalityHS},
we get
\begin{eqnarray}
\langle\partial_n v, \eta^2 \vjump\rangle &=& 
\langle \partial_n v, \eta^2 \vjump - I_h (\eta^2 \vjump)\rangle + 
\mu\skp{I_h(\eta^2 \vjump),1}.
\label{eq:lem:Caccioppoli-10HS}
%&=& \langle \eta^2 \vjump - I_h (\eta^2 \vjump),
%\widetilde \eta^2 \partial_n v - \Pi^{L^2} (\widetilde \eta^2 \partial_n v)\rangle_{L^2(\Gamma)},
\end{eqnarray}
The approximation property \eqref{eq:NIapprox} leads to
\been\label{eq:cacctemp1}
\norm{\eta^2\vjump - I_h(\eta^2\vjump)}_{L^2(\Gamma)}^2 \lesssim h^{2(p+1)}
\sum_{T \in \T_h}\abs{\eta^2\vjump}_{H^{p+1}(T)}^2.
\een
Since for each $T \in \T_h$ we have  $\vjump|_T \in {\mathcal P}_p$,  
we get $D^{k}\vjump|_T = 0$ for all multiindices 
$k \in \N_0^{d}$ with $\abs{k}:=\sum_{i=1}^d k_i = p+1$ and 
$\eta|_T \in \mathcal{P}_1$ implies $D^j \eta|_T = 0$ for $j \in \N_0^{d}$ with $\abs{j} \geq 2$.
With the Leibniz product rule, a direct calculation (see \cite[Lemma 2]{FMPFEM} for details) leads to
\begin{eqnarray*}
\abs{\eta^2 \vjump}^2_{H^{p+1}(T)}  &\lesssim& 
\frac{1}{(\delta R)^2} \abs{\eta \vjump}_{H^{p}(T)}^2+\frac{1}{(\delta R)^4} \abs{\vjump}_{H^{p-1}(T)}^2,
\end{eqnarray*} 
where the suppressed constant depends on $p$. 
The inverse inequalities \eqref{eq:inverse2} given in 
Lemma~\ref{lem:inverseinequality} imply
\bean\label{eq:NIjumpest}
\norm{\eta^2\vjump - I_h(\eta^2\vjump)}_{L^2(\Gamma)}^2 
&\lesssim& h^{2(p+1)}\sum_{T \in \T_h}\left(
\frac{1}{(\delta R)^2} \abs{\eta \vjump}_{H^{p}(T)}^2+\frac{1}{(\delta R)^4} \abs{\vjump}_{H^{p-1}(T)}^2\right)\nonumber\\
&\lesssim& \frac{h^{3}}{(\delta R)^2} 
%\norm{\eta\vjump}_{H^{1/2}(B_{(1+\delta)R}\cap\Gamma)}^2
\norm{\eta\vjump}_{H^{1/2}(\Gamma)}^2 
+ \frac{h^{4}}{(\delta R)^4} \norm{\eta\vjump}_{L^{2}(B_{(1+\delta)R}\cap\Gamma)}^2.
%&\lesssim& \frac{h^{2}}{(\delta R)^{5/2}}\norm{v}_{L^2(B_{(1+\delta)R}\setminus\Gamma)} 
% + \frac{h^{2}}{(\delta R)^2}\norm{\nabla v}_{L^2(B_{(1+\delta)R}\setminus \Gamma)}.
\eean
With the trace inequality, we obtain
\bean\label{eq:traceestimate}
\norm{\eta \vjump}_{H^{1/2}(\Gamma)}^2 &=& 
\norm{\gamma_0^{\text{ext}}(\eta v) - \gamma_0^{\text{int}}(\eta v)}_{H^{1/2}(\Gamma)}^2 \nonumber \\
 &\lesssim& \norm{\eta v}_{L^2(\Omega)}^2 + \norm{\nabla(\eta v)}_{L^2(\Omega)}^2 + 
\norm{\eta v}_{L^2(\Omega^c)}^2 +\norm{\nabla(\eta v)}_{L^2(\Omega^c)}^2 \nonumber \\
&\leq&\norm{v}_{L^2(B_{(1+\delta)R}\setminus\Gamma)}^2 + 
\norm{\nabla(\eta v)}_{L^2(B_{(1+\delta)R}\setminus\Gamma)}^2.
\eean
In the same way, the multiplicative trace inequality implies
\bean\label{eq:tempmultrace}
\norm{\eta \vjump}_{L^{2}(\Gamma)}^2 \lesssim \frac{1}{\delta R}\norm{\eta v}_{L^2(B_{(1+\delta)R}\setminus\Gamma)}^2 + 
\norm{\eta v}_{L^2(B_{(1+\delta)R}\setminus\Gamma)}\norm{\eta \nabla v}_{L^2(B_{(1+\delta)R}\setminus\Gamma)}.
\eean
We apply Lemma~\ref{lem:estimateun} with $\widetilde{R}=(1+\delta/2)R$ and 
$\widetilde{\delta}=\frac{\delta}{2+\delta}$ such that $(1+\widetilde{\delta})\widetilde{R}=(1+\delta)R$.
Together with \eqref{eq:NIjumpest} -- \eqref{eq:tempmultrace}, we get 
\begin{align*}
&\abs{\langle \partial_n v, \eta^2 \vjump - I_h (\eta^2 \vjump)\rangle} \leq 
\norm{\partial_n v}_{L^{2}(B_{(1+\delta/2)R}\cap\Gamma)}
\norm{\eta^2\vjump - I_h(\eta^2\vjump)}_{L^2(\Gamma)} \\
&\qquad\leq C \left(\norm{\nabla v}_{L^2(B_{(1+\delta) R}\setminus\Gamma)}+
\frac{1}{\delta R}\norm{v}_{L^2(B_{(1+\delta) R}\setminus\Gamma)}\right)
\bigg\{ \frac{h}{\delta R}\left(\norm{v}_{L^2(B_{(1+\delta)R}\setminus\Gamma)}+ 
\norm{\nabla(\eta v)}_{L^2(B_{(1+\delta)R}\setminus\Gamma)}\right) \\
&\qquad\qquad +
 \frac{h^{3/2}}{(\delta R)^2}\left(\frac{1}{(\delta R)^{1/2}}\norm{v}_{L^2(B_{(1+\delta)R}\setminus\Gamma)}+ 
\norm{\eta v}^{1/2}_{L^2(B_{(1+\delta)R}\setminus\Gamma)}\norm{\eta\nabla v}^{1/2}_{L^2(B_{(1+\delta)R}\setminus\Gamma)}\right)
\bigg\} \\
&\qquad\leq C\frac{h^2}{(\delta R)^2}\norm{\nabla v}_{L^2(B_{(1+\delta)R}\setminus \Gamma)}^2 + 
C\frac{1}{(\delta R)^2}\norm{v}_{L^2(B_{(1+\delta)R}\setminus \Gamma)}^2+ 
\frac{1}{4} \norm{\nabla(\eta v)}_{L^2(B_{(1+\delta)R}\setminus\Gamma)}^2,
\end{align*}
where, in the last step, we applied Young's inequality
as well as the assumptions $\frac{h}{R} \leq \frac{\delta}{8}$ and $\delta R \leq 2 \diam (\Omega)$  multiple times.
The last term in \eqref{eq:lem:Caccioppoli-10HS} can
be estimated with \eqref{eq:NIapprox}, $\eta \leq 1$,
the previous estimates \eqref{eq:NIjumpest} -- \eqref{eq:traceestimate}, 
and the assumption 
$\frac{h}{R} \leq \frac{\delta}{8}$, as well as $\delta R \leq 2 \diam (\Omega)$ by
\bea
\abs{\mu\skp{I_h(\eta^2 \vjump),1}}&\lesssim& \abs{\mu\skp{\eta^2 \vjump,1}} +
\abs{\mu\skp{\eta^2\vjump - I_h(\eta^2 \vjump),1}}  \\
&\lesssim&\abs{\mu}\abs{B_{(1+\delta)R}\cap\Gamma}^{1/2}
\left(\norm{\eta^2\vjump}_{L^2(\Gamma)}+\norm{\eta^2\vjump - I_h(\eta^2\vjump)}_{L^2(\Gamma)}\right) \\
&\lesssim&
\abs{\mu}((1+\delta)R)^{(d-1)/2}\left(\norm{\eta^2\vjump}_{L^{2}(\Gamma)}+
h^{1/2}\norm{\eta^2\vjump}_{H^{1/2}(\Gamma)}\right) \\
 &\lesssim&
\abs{\mu}((1+\delta)R)^{(d-1)/2}\left(\norm{v}_{L^2(B_{(1+\delta)R}\setminus\Gamma)} + 
\norm{\nabla(\eta v)}_{L^2(B_{(1+\delta)R}\setminus\Gamma)}\right).
\eea
Applying Young's inequality, we obtain
\bee
\abs{\mu\skp{I_h(\eta^2 \vjump),1}}\leq 
C((1+\delta)R)^{d-1}\abs{\mu}^2 + C\frac{1}{(\delta R)^2}\norm{v}_{L^2(B_{(1+\delta)R}\setminus\Gamma)}^2
+\frac{1}{4}\norm{\nabla(\eta v)}_{L^2(B_{(1+\delta)R}\setminus\Gamma)}^2.
\ee
Inserting the previous estimates in \eqref{eq:lem:Caccioppoli-10HS}, 
Lemma~\ref{lem:estimateun},
Young's inequality, and the assumption $\frac{h}{R} \leq \frac{\delta}{8}$ lead to
\bea
\abs{\langle \partial_n v, \eta^2 \vjump\rangle} &\leq& 
\abs{\langle \partial_n v, \eta^2 \vjump - I_h (\eta^2 \vjump)\rangle}+\abs{\mu\skp{I_h(\eta^2 \vjump),1}}\\
&\leq&C\frac{h^2}{(\delta R)^2}\norm{\nabla v}_{L^2(B_{(1+\delta)R}\setminus \Gamma)}^2 + 
C\frac{1}{(\delta R)^2}\norm{v}_{L^2(B_{(1+\delta)R}\setminus \Gamma)}^2 +C((1+\delta)R)^{d-1}\abs{\mu}^2 \\
& & + \frac{1}{2} \norm{\nabla(\eta v)}_{L^2(B_{(1+\delta)R}\setminus\Gamma)}^2.
\eea
Inserting this in \eqref{eq:CaccHS1} and subtracting the term 
$\frac{1}{2} \norm{\nabla(\eta v)}_{L^2(B_{(1+\delta)R}\setminus\Gamma)}^2$ from both sides 
finally leads to
\bea
\norm{\nabla(\eta v)}_{L^2(B_{(1+\delta)R}\setminus \Gamma)}^2 \lesssim \frac{h^2}{(\delta R)^2} \norm{\nabla v}^2_{L^2(B_{(1+\delta)R} \setminus \Gamma)} + 
\frac{1}{(\delta R)^2}\norm{v}_{L^2(B_{(1+\delta)R} \setminus \Gamma)}^2+((1+\delta)R)^{d-1}\abs{\mu}^2,
\eea
which finishes the proof.
\epro

We consider $\gamma$-shape regular triangulations ${\mathcal{E}_H}$ of $\R^d$ that conform to 
$\Omega$. More precisely, we will assume that every $E \in {\mathcal{E}_H}$ satisfies either 
$E \subset \ol{\Omega}$ or $E\subset \Omega^c$ and that the restrictions 
${\mathcal E}_H|_{\Omega}$ and ${\mathcal E}_H|_{\Omega^c}$ are $\gamma$-shape regular, regular
triangulations of $\Omega$ and $\Omega^c$ of mesh size $H$, respectively. 
On the piecewise regular mesh ${\mathcal E}_H$, we define the Scott-Zhang projection 
$J_H:H^1(\R^d\setminus \Gamma) \rightarrow 
S^{1,1}_{pw}:= \{v\,:\, v|_\Omega \in S^{1,1}({\mathcal E}_H|_{\Omega})\ \mbox{ and } \ 
v|_{\Omega^c} \in S^{1,1}({\mathcal E}_H|_{\Omega^c})\}$ in a piecewise fashion by 
\been\label{eq:pwInterpolation}
J_H v = \left\{
\begin{array}{l}
 \widetilde J_H^{\rm int} v \quad \text{for} \, x \in \overline{\Omega}, \\
 \widetilde J_H^{\rm ext} v \quad \textrm{otherwise};
 \end{array}
 \right.
\een
here, $\widetilde J_H^{\rm int}$, $\widetilde J_H^{\rm ext}$ denote the Scott-Zhang projections 
for the grids $\mathcal{E}_H|_{\Omega}$ and $\mathcal{E}_H|_{{\Omega}^c}$. Since $J_H$ is a piecewise Scott-Zhang projection
the approximation properties proved in \cite{ScottZhang} apply and result in the following estimates: 
\begin{equation}
\label{eq:SZapprox}
\norm{v-J_H v}_{H^m(E)}^2 \leq C H^{2(\ell-m)} 
\begin{cases} 
\abs{v}_{H^{\ell}(\omega_E^\Omega)} &\mbox{ if $E \subset \Omega$} \\
\abs{v}_{H^{\ell}(\omega_E^{\Omega^c})} &\mbox{ if $E \subset \Omega^c$} 
\end{cases}
\quad 0 \leq m \leq \ell \leq 1;
\end{equation}
here, 
\begin{align*}
\omega_E^\Omega = 
\bigcup\left\{E' \in \mathcal{E}_H|_{\Omega} \;:\; E \cap E' \neq \emptyset \right\},  
\qquad 
\omega_E^{\Omega^c} = 
\bigcup\left\{E' \in \mathcal{E}_H|_{\Omega^c} \;:\; E \cap E' \neq \emptyset \right\}. 
\end{align*}
The constant $C>0$ in (\ref{eq:SZapprox}) depends only on the $\gamma$-shape regularity of the 
quasiuniform triangulation $\mathcal{E}_H$ and the dimension $d$.\\

Let $\Pi_{h,R,\mu} : (H^1(B_R\setminus\Gamma),\triplenorm{\cdot}_{h,R}) \rightarrow 
(\H_{h,0}(B_R,\Gamma_{\rho},\mu), 
\triplenorm{\cdot}_{h,R})$ 
be the orthogonal projection, 
which is well-defined since $\H_{h,0}(B_R,\Gamma_{\rho},\mu)\subset H^1(B_R\setminus\Gamma)$ is 
a closed subspace by Lemma~\ref{lem:closedsubspaceHS}. 

% Lemma: Low dimensional approximation of the potential u
\begin{lemma}\label{lem:lowdimappHS}
Let $\delta \in (0,1)$,  $R\in (0,2 \operatorname*{diam} (\Omega))$ 
be such that $\frac{h}{R}\leq \frac{\delta}{8}$. Let 
$B_R$, $B_{(1+\delta)R}$, $B_{(1+2\delta)R}$ be concentric boxes. 
Let $\Gamma_{\rho}\subset \Gamma$ be of the form \eqref{eq:screen}
and $\mu \in\R$.
Let $\mathcal{E}_H $ be an (infinite) $\gamma$-shape regular triangulation of $\mathbb{R}^d$ 
of mesh width $H$ that conforms to $\Omega$ as described above. Assume $\frac{H}{R} \leq \frac{\delta}{4}$. 
Let $J_H: H^1(\mathbb{R}^d\setminus\Gamma) \rightarrow S^{p,1}_{\rm pw}$ be the piecewise Scott-Zhang projection
defined in \eqref{eq:pwInterpolation}. 
Then, there exists a constant $C_{\rm app} > 0$ that depends only on $\Omega$, $d,p$, and $\gamma$, such that for
$v\in\H_{h,0}(B_{(1+2\delta)R},\Gamma_{\rho},\mu)$
\begin{enumerate}[(i)]
\item 
\label{item:lem:lowdimapp-ii}
$\big(v-\Pi_{h,R,\mu}J_H v\big)|_{B_{R}} \in \H_{h,0}(B_{R},\Gamma_{\rho},0)$;  \\[-2mm]
\item 
\label{item:lem:lowdimapp-i}
%$\triplenorm{u-\Pi_{h,R}J_H u}_{h,R} \leq C\frac{1+2\delta}{\delta} \left(\frac{h}{R}+\frac{H}{R}\right)\triplenorm{u}_{h,(1+3\delta)R}$ 
$\triplenorm{v-\Pi_{h,R,\mu}J_H v}_{h,R} \leq C_{\rm app} 
\left(\frac{h}{R}+\frac{H}{R}\right)\left(\frac{1+2\delta}{\delta}\triplenorm{v}_{h,(1+2\delta)R} + 
((1+2\delta)R)^{(d-1)/2}\abs{\mu}\right)$;  
\item 
\label{item:lem:lowdimapp-iii}
$\dim W\leq C_{\rm app}\left(\frac{(1+2\delta)R}{H}\right)^d$, where 
$W:=\Pi_{h,R,\mu}J_H \H_{h,0}(B_{(1+2\delta)R},\Gamma_{\rho},\mu) $. 
\end{enumerate}
\end{lemma}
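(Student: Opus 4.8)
The plan is to deduce all three assertions from a single structural observation about the constrained projection $\Pi_{h,R,\mu}$. The key point is that the restriction $v|_{B_R}$ of $v\in\H_{h,0}(B_{(1+2\delta)R},\Gamma_\rho,\mu)$ already belongs to $\H_{h,0}(B_R,\Gamma_\rho,\mu)$: piecewise harmonicity, the identity $[\partial_n v]|_{B_R\cap\Gamma}=0$, the membership of $[\gamma_0 v]|_{B_R\cap\Gamma}$ in $S^{p,1}(\mathcal T_h)$ with support in $\overline{\Gamma_\rho}$, and the orthogonality relation tested against $\psi_h$ with $\supp\psi_h\subset B_R\cap\overline{\Gamma_\rho}$ all pass from the larger box to the smaller one; for the distributional objects $[\partial_n\,\cdot\,]$ and $\partial_n\,\cdot\,|_{\cdot\cap\Gamma}$ this is just the observation that a test function supported in $B_R$ extends by zero to a test function on $B_{(1+2\delta)R}$. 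Hence $\Pi_{h,R,\mu}$ leaves $v|_{B_R}$ invariant. Moreover, $\H_{h,0}(B_R,\Gamma_\rho,\mu)$ is a nonempty closed affine subspace of $(H^1(B_R\setminus\Gamma),\triplenorm{\cdot}_{h,R})$ (Lemma~\ref{lem:closedsubspaceHS} and the norm equivalence) whose direction space is exactly the linear space $\H_{h,0}(B_R,\Gamma_\rho,0)$, because the difference of two of its elements annihilates the constraint functional $\psi_h\mapsto\mu\langle\psi_h,1\rangle$. Consequently the orthogonal projection satisfies $\Pi_{h,R,\mu}x-\Pi_{h,R,\mu}y=P_0(x-y)$, where $P_0$ denotes the $\triplenorm{\cdot}_{h,R}$-orthogonal projection onto $\H_{h,0}(B_R,\Gamma_\rho,0)$. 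Taking $x=v$ and $y=J_Hv$ and using $\Pi_{h,R,\mu}v=v|_{B_R}$ gives
\[
v-\Pi_{h,R,\mu}J_Hv \;=\; P_0\big((v-J_Hv)|_{B_R}\big),
\]
and since the right-hand side lies in $\H_{h,0}(B_R,\Gamma_\rho,0)$ by definition of $P_0$, this is exactly assertion~(\ref{item:lem:lowdimapp-ii}).

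For assertion~(\ref{item:lem:lowdimapp-i}) I would use that the orthogonal projection $P_0$ is a $\triplenorm{\cdot}_{h,R}$-contraction, so it suffices to estimate $\triplenorm{v-J_Hv}_{h,R}$. The assumption $H/R\le\delta/4$ ensures that the Scott--Zhang patches $\omega_E$ of the elements $E\in\mathcal E_H$ meeting $B_R$ are contained in $B_{(1+\delta)R}$; the approximation estimate \eqref{eq:SZapprox} (with $\ell=m=1$ for the gradient and $\ell=1$, $m=0$ for the $L^2$-term), summed over these elements with the usual finite-overlap argument, then yields
\[
\triplenorm{v-J_Hv}_{h,R}\;\lesssim\;\Big(\frac hR+\frac HR\Big)\,\norm{\nabla v}_{L^2(B_{(1+\delta)R}\setminus\Gamma)}.
\]
It then remains to control $\norm{\nabla v}_{L^2(B_{(1+\delta)R}\setminus\Gamma)}$ by the norms appearing on the right of~(\ref{item:lem:lowdimapp-i}); this is precisely Lemma~\ref{lem:CaccioppoliHS} applied to $v\in\H_{h,0}(B_{(1+2\delta)R},\Gamma_\rho,\mu)$ with the concentric pair $B_{(1+\delta)R}\subset B_{(1+2\delta)R}$, i.e.\ with $\widetilde R=(1+\delta)R$ and $\widetilde\delta=\delta/(1+\delta)$, for which $\tfrac{1+\widetilde\delta}{\widetilde\delta}=\tfrac{1+2\delta}{\delta}$ and the present hypothesis $h/R\le\delta/8$ is equivalent to the required $h/\widetilde R\le\widetilde\delta/8$. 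Combining the last display with the Caccioppoli bound produces the claimed estimate.

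For the dimension bound~(\ref{item:lem:lowdimapp-iii}), observe that $(J_Hv)|_{B_R}$ depends on $v$ only through its values on $\bigcup\{\omega_E:E\cap B_R\neq\emptyset\}\subset B_{(1+\delta)R}$ and lies in the span of those Scott--Zhang nodal basis functions whose patches meet $B_R$; by $\gamma$-shape regularity and $H$-quasiuniformity of $\mathcal E_H$ there are at most $C\,((1+2\delta)R/H)^d$ of these. Since $J_H$ is linear and $\Pi_{h,R,\mu}$ is affine, $W=\Pi_{h,R,\mu}J_H\,\H_{h,0}(B_{(1+2\delta)R},\Gamma_\rho,\mu)$ is an affine subspace with direction space $P_0\big(J_H\,\H_{h,0}(B_{(1+2\delta)R},\Gamma_\rho,0)\big)$, and neither restriction of $J_H$ to $B_R$ nor $P_0$ can raise the dimension; hence $\dim W\le C\,((1+2\delta)R/H)^d$.

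The only genuinely non-routine step is the structural observation of the first paragraph: recognizing that $v$ restricted to $B_R$ is already fixed by the \emph{constrained} projection $\Pi_{h,R,\mu}$, so that the approximation error collapses to $P_0$ applied to the plain Scott--Zhang error $v-J_Hv$. Once this is established, (\ref{item:lem:lowdimapp-ii}) is immediate, (\ref{item:lem:lowdimapp-i}) is a mechanical combination of the Scott--Zhang estimate \eqref{eq:SZapprox} with the Caccioppoli inequality of Lemma~\ref{lem:CaccioppoliHS} (the only care being the choice of concentric boxes and the bookkeeping of the constants $\delta$), and (\ref{item:lem:lowdimapp-iii}) is a counting argument. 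The localization of the distributional normal-derivative conditions from $B_{(1+2\delta)R}$ to $B_R$ needs a word of justification but is entirely standard.
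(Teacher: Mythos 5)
Your proposal is correct and follows essentially the same route as the paper: part~(\ref{item:lem:lowdimapp-ii}) from the observation that $v|_{B_R}$ is already fixed by $\Pi_{h,R,\mu}$, part~(\ref{item:lem:lowdimapp-i}) by combining the Scott--Zhang estimate \eqref{eq:SZapprox} on $B_R$ with Lemma~\ref{lem:CaccioppoliHS} applied to the pair $\widetilde R=(1+\delta)R$, $\widetilde\delta=\delta/(1+\delta)$, and part~(\ref{item:lem:lowdimapp-iii}) by counting Scott--Zhang degrees of freedom near $B_R$. Your explicit handling of the affine structure of $\H_{h,0}(B_R,\Gamma_\rho,\mu)$ via the linear projection $P_0$ onto the direction space $\H_{h,0}(B_R,\Gamma_\rho,0)$ is in fact slightly more careful than the paper's notation, which writes $\Pi_{h,R,\mu}(u-J_Hu)$ for the same object.
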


\bpro
For $u \in \mathcal{H}_{h,0}(B_{(1+2\delta)R},\Gamma_{\rho},\mu)$, we have 
$u \in \mathcal{H}_{h,0}(B_{R},\Gamma_{\rho},\mu)$ as well and hence 
$\Pi_{h,R,\mu}\left(u|_{B_{R}}\right) = u|_{B_{R}}$, which gives (\ref{item:lem:lowdimapp-ii}).

The assumption $\frac{H}{R} \leq \frac{\delta}{4}$ implies 
$\bigcup\{E \in \mathcal{E}_H \;:\; \omega_E \cap B_{R} \neq \emptyset\} \subseteq B_{(1+\delta)R}$.
The locality and the approximation properties \eqref{eq:SZapprox} of $J_H$ yield
\begin{eqnarray*}
\frac{1}{H} \norm{u - J_Hu}_{L^2(B_{R}\setminus\Gamma)} + 
\norm{\nabla(u - J_Hu)}_{L^2(B_{R}\setminus\Gamma)} &\lesssim&  
\norm{\nabla u}_{L^2(B_{(1+\delta)R}\setminus\Gamma)}.
\end{eqnarray*}
We apply Lemma~\ref{lem:CaccioppoliHS} with
$\widetilde{R} = (1+\delta)R$ and $\widetilde{\delta} = \frac{\delta}{1+\delta}$. 
Note that $(1+\widetilde{\delta})\widetilde{R} = (1+2\delta)R$, and
 $\frac{h}{\widetilde{R}}\leq \frac{\widetilde{\delta}}{8}$
follows from $8h \leq \delta R = \widetilde{\delta}\widetilde{R}$. Hence, we obtain
\begin{align*}
&\triplenorm{u-\Pi_{h,R,\mu}J_H u}_{h,R}^2 =\triplenorm{\Pi_{h,R,\mu}\left(u-J_H u\right)}^2_{h,R} \leq 
\triplenorm{u-J_H u}_{h,R}^2 \\
& \qquad= \left(\frac{h}{R}\right)^{2}\norm{\nabla (u-J_H u)}_{L^2(B_{R}\setminus\Gamma)}^2   + 
\frac{1}{R^2} \norm{u-J_H u}_{L^2(B_{R}\setminus\Gamma)}^2\\
&\qquad\lesssim\frac{h^2}{R^2}\norm{\nabla u}_{L^{2}(B_{(1+\delta) R}\setminus\Gamma)}^2 + 
\frac{H^2}{R^2}\norm{\nabla u}_{L^2(B_{(1+\delta)R}\setminus\Gamma)}^2\\
  %&\qquad\leq  \left(C\frac{1+2\delta}{\delta}\left(\frac{h}{R}+\frac{H}{R}\right)\right)^2\triplenorm{u}^2_{h,(1+2\delta)R},
  &\qquad\lesssim \left(\frac{h}{R}+\frac{H}{R}\right)^2
\left(\frac{(1+2\delta)^2}{\delta^2}\triplenorm{u}^2_{h,(1+2\delta)R}+((1+2\delta)R)^{d-1}\abs{\mu}^2\right),
\end{align*}
which concludes the proof (\ref{item:lem:lowdimapp-i}).
The statement (\ref{item:lem:lowdimapp-iii}) follows from the fact that 
$\dim J_H\mathcal{H}_{h,0}(B_{(1+2\delta)R},\Gamma_{\rho},\mu) \lesssim ((1+2\delta)R/H)^d$. 
\epro

\begin{lemma}\label{cor:lowdimappHS}
Let $C_{\rm app}$ be the constant of Lemma~\ref{lem:lowdimappHS}.
Let $q,\kappa \in (0,1)$,  $R \in (0,2\operatorname*{diam}(\Omega))$, $k \in \mathbb{N}$, 
and $\Gamma_{\rho}\subset \Gamma$ be of the form \eqref{eq:screen}. 
Assume 
\begin{equation}
\label{eq:cor:lowdimapp-1HS}
\frac{h}{R} \leq \frac{\kappa q} {32 k \max\{C_{\rm app},1\}}.
\end{equation}
Then, there exists a finite dimensional subspace $\widehat{W}_k$ of 
$\mathcal{H}_{h,0}(B_{(1+\kappa)R},\Gamma_{\rho},\mu)$ 
with dimension 
$$
\dim \widehat{W}_k \leq C_{\rm dim} \left(\frac{1 + \kappa^{-1}}{q}\right)^dk^{d+1},$$
such that for every $v \in \mathcal{H}_{h,0}(B_{(1+\kappa)R},\Gamma_{\rho},\mu)$ it holds 

\begin{align}
\label{eq:lowdimappHS}
&\min_{\widehat{w} \in \widehat{W}_k} \norm{[\gamma_0 v]- [\gamma_0\widehat{w}]}_{L^2(B_R\cap\Gamma_{\rho})}  \\
&
\qquad \leq  C_{\rm low}R(1+\kappa) h^{-1/2} \min_{\widehat{w}\in \widehat{W}_k} 
\triplenorm{v-\widehat{w}}_{h,(1+\kappa/2)R} \nonumber\\
&
\qquad \leq 
C_{\rm low}R(1+\kappa) h^{-1/2} q^{k} \left(\triplenorm{v}_{h,(1+\kappa)R}+
((1+\kappa)R)^{(d-1)/2}\abs{\mu}\right). 
\nonumber 
\end{align}

The constants $C_{\rm dim}$, $C_{\rm low}>0$ depends only on $\Omega$, $d$, $p$, and the 
$\gamma$-shape regularity of the quasiuniform triangulation $\mathcal{T}_h$.
\end{lemma}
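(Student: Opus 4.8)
The plan is to prove the lemma by iterating Lemma~\ref{lem:lowdimappHS} across a telescoping family of concentric boxes, and then to convert the resulting $\triplenorm{\cdot}_{h,(1+\kappa/2)R}$-estimate into the claimed $L^2$-bound on the jump by a (multiplicative) trace inequality. For $j=0,1,\dots,k$ I would set $R^{(j)}:=\big(1+\tfrac{\kappa}{2}+\tfrac{j\kappa}{2k}\big)R$, so that $R^{(0)}=(1+\kappa/2)R$, $R^{(k)}=(1+\kappa)R$, and consecutive radii differ by $\kappa R/(2k)$, and write $B^{(j)}:=B_{R^{(j)}}$. For the $j$-th step ($j$ running from $k$ down to $1$) I apply Lemma~\ref{lem:lowdimappHS} with center box $B^{(j-1)}$ and outer box $B^{(j)}$, i.e.\ with $\delta=\delta_j$ determined by $(1+2\delta_j)R^{(j-1)}=R^{(j)}$ (hence $\delta_jR^{(j-1)}=\kappa R/(4k)$ and $\delta_j\in[\kappa/(8k),\kappa/(4k)]$), and with the auxiliary mesh $\mathcal E_H$ of width $H:=q\kappa R/(16k\max\{C_{\rm app},1\})$. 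The smallness hypothesis \eqref{eq:cor:lowdimapp-1HS} is precisely what yields $h/R^{(j-1)}\le\delta_j/8$ and $H/R^{(j-1)}\le\delta_j/4$, so the lemma is applicable at every step.

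Next I run the iteration. Starting from $v_0:=v\in\mathcal H_{h,0}(B^{(k)},\Gamma_\rho,\mu)$, set $w_1:=\Pi_{h,R^{(k-1)},\mu}J_Hv_0$ and, for $j\ge2$, $w_j:=\Pi_{h,R^{(k-j)},0}J_Hv_{j-1}$, with $v_j:=(v_{j-1}-w_j)|_{B^{(k-j)}}$. By Lemma~\ref{lem:lowdimappHS}(\ref{item:lem:lowdimapp-ii}) one has $v_1\in\mathcal H_{h,0}(B^{(k-1)},\Gamma_\rho,0)$ and $v_j\in\mathcal H_{h,0}(B^{(k-j)},\Gamma_\rho,0)$ for all $j$; in particular the parameter $\mu$ is ``seen'' only in the first step, which is the structural point that prevents an accumulation of $\mu$-terms later. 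I then set $\widehat w:=\sum_{j=1}^kw_j$ (a function on $B^{(0)}=B_{(1+\kappa/2)R}$) and let $\widehat W_k:=\sum_{j=1}^kW_j$, with $W_j$ the image space of the $j$-th projection. The dimension bound is then immediate from Lemma~\ref{lem:lowdimappHS}(\ref{item:lem:lowdimapp-iii}): using $R^{(j)}\le(1+\kappa)R$ and $(1+\kappa)/\kappa=1+\kappa^{-1}$ one gets $\dim W_j\le C_{\rm app}(R^{(j)}/H)^d\lesssim\big((1+\kappa^{-1})k/q\big)^d$, and summing over the $k$ steps (plus one for the affine offset) yields $\dim\widehat W_k\lesssim\big((1+\kappa^{-1})/q\big)^dk^{d+1}$.

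For the error I track the per-step factor $\theta_j:=C_{\rm app}\big(h/R^{(j-1)}+H/R^{(j-1)}\big)\tfrac{1+2\delta_j}{\delta_j}$. Since $\tfrac{1+2\delta_j}{\delta_j}\le 10k/\kappa$ and, by \eqref{eq:cor:lowdimapp-1HS} and the choice of $H$, $C_{\rm app}h/R^{(j-1)}\le \kappa q/(32k)$ and $C_{\rm app}H/R^{(j-1)}\le q\kappa/(16k)$, one checks $\theta_j\le q$ for every $j$; moreover $C_{\rm app}(h/R^{(j-1)}+H/R^{(j-1)})\le\theta_j\le q$. Lemma~\ref{lem:lowdimappHS}(\ref{item:lem:lowdimapp-i}) then gives $\triplenorm{v_1}_{h,R^{(k-1)}}\le q\big(\triplenorm{v}_{h,(1+\kappa)R}+((1+\kappa)R)^{(d-1)/2}|\mu|\big)$ and, for $j\ge2$ (where $\mu$ has dropped out), $\triplenorm{v_j}_{h,R^{(k-j)}}\le q\,\triplenorm{v_{j-1}}_{h,R^{(k-j+1)}}$. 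Multiplying these $k$ estimates yields
\[
\triplenorm{v-\widehat w}_{h,(1+\kappa/2)R}=\triplenorm{v_k}_{h,R^{(0)}}\le q^k\big(\triplenorm{v}_{h,(1+\kappa)R}+((1+\kappa)R)^{(d-1)/2}|\mu|\big).
\]
To finish, I bound the jump: with a cut-off $\chi$ satisfying $\chi\equiv1$ on $B_R$, $\supp\chi\subset B_{(1+\kappa/2)R}$, $\|\nabla\chi\|_\infty\lesssim(\kappa R)^{-1}$, applying the multiplicative trace inequality (as in the proofs of Lemmas~\ref{lem:estimateun} and \ref{lem:CaccioppoliHS}) to $\chi(v-\widehat w)$ on $\Omega$ and on $\Omega^c$, and using $h\le R$, $\kappa<1$, gives
\[
\|[\gamma_0 v]-[\gamma_0\widehat w]\|_{L^2(B_R\cap\Gamma_\rho)}\le\|[\gamma_0(\chi(v-\widehat w))]\|_{L^2(\Gamma)}\lesssim h^{-1/2}R(1+\kappa)\,\triplenorm{v-\widehat w}_{h,(1+\kappa/2)R}.
\]
Since $\widehat w$ is a competitor in both minima of \eqref{eq:lowdimappHS}, combining the last two displays proves both inequalities.

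The main obstacle is the bookkeeping in the first paragraph: the radii $R^{(j)}$, the overlaps $\delta_j$, and the auxiliary mesh size $H$ must be chosen so that, out of the single smallness hypothesis \eqref{eq:cor:lowdimapp-1HS}, Lemma~\ref{lem:lowdimappHS} is applicable at each of the $k$ steps, the per-step contraction is $\le q$, \emph{and} the per-step dimension is $\lesssim\big((1+\kappa^{-1})k/q\big)^d$ — all at once; this is where the constant $32k$ and the $\kappa^{-1}$-dependence in the statement originate. The one genuinely structural input that makes everything line up is Lemma~\ref{lem:lowdimappHS}(\ref{item:lem:lowdimapp-ii}): because the residual after the first step already lies in the space with parameter $0$, the $\mu$-contribution is created only once and is merely carried — not amplified — through the remaining $k-1$ steps, which is what produces the clean factor $q^k$ in front of $((1+\kappa)R)^{(d-1)/2}|\mu|$. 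The trace step, the dimension count, and the telescoping itself are then routine.
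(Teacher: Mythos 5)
Your proposal is correct and follows essentially the same route as the paper's proof: the same telescoping family of concentric boxes shrinking from $(1+\kappa)R$ to $(1+\kappa/2)R$, the same choice of auxiliary mesh width $H\simeq \kappa qR/(k\max\{C_{\rm app},1\})$, the same iteration of Lemma~\ref{lem:lowdimappHS} in which part~(\ref{item:lem:lowdimapp-ii}) forces $\mu=0$ after the first step, and the same concluding multiplicative trace inequality. The only cosmetic deviations are the reversed indexing of the boxes and the constant $16k$ instead of $32k$ in $H$ (both satisfy the required constraints); note only that the absorption of the term $(\kappa R)^{-1}\|z\|^2$ in the trace step uses $h\leq\kappa R$, which follows from \eqref{eq:cor:lowdimapp-1HS} rather than from $h\leq R$ and $\kappa<1$ alone.
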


\bpro
Let $B_R$ and $B_{(1+\delta_j)R}$ 
with $\delta_j := \kappa(1-\frac{j}{2k})$ for $j=0,\dots,k$ be concentric boxes. 
We note that $\kappa = \delta_0>\delta_1 > \dots >\delta_k = \frac{\kappa}{2}$.
We choose $H = \frac{\kappa q R}{32k\max\{C_{\rm app},1\}}$, where $C_{\rm app}$ is the constant 
in Lemma~\ref{lem:lowdimappHS}.
By the choice of $H$, we have $h \leq H$. We apply Lemma~\ref{lem:lowdimappHS} with 
$\widetilde{R}\! =\! (1+\delta_j)R$ and 
$\widetilde{\delta}_j\! =\! \frac{\kappa}{4k(1+\delta_j)}\!<\!\frac{1}{4}$.
Note that $\delta_{j-1} = \delta_j +\frac{\kappa}{2k}$ gives 
$(1+\delta_{j-1})R=(1+2\widetilde{\delta}_j)\widetilde{R}$. 
Our choice of $H$ implies 
$\frac{H}{\widetilde{R}}\leq \frac{\widetilde{\delta}_j}{4}$. 
Hence, for $j=1$, Lemma~\ref{lem:lowdimappHS} provides a 
subspace $W_1$ of $\mathcal{H}_{h,0}(B_{(1+\delta_1)R},\Gamma_{\rho},\mu)$ 
with $\dim W_1 \leq C\left(\frac{(1+\kappa)R}{H}\right)^d$ 
and a $w_1 \in W_1$ such that
\begin{eqnarray*}
\triplenorm{v-w_1}_{h,(1+\delta_1)R} &\leq& 
2C_{\rm app}\frac{H}{(1+\delta_1)R}\left({\frac{1+2\widetilde{\delta}_1}{\widetilde{\delta}_1}} 
\triplenorm{v}_{h,(1+\delta_0)R}+((1+\delta_0)R)^{(d-1)/2}\abs{\mu}\right)\\
 &=& 8C_{\rm app}\frac{k H}{\kappa R}(1+2\widetilde{\delta}_1)\left(\triplenorm{v}_{h,(1+\kappa)R} + 
\frac{\widetilde{\delta}_1}{1+2\widetilde{\delta}_1}(1+\delta_0)^{(d-1)/2}\abs{\mu}\right) \\
&\leq& q\left(\triplenorm{v}_{h,(1+\kappa)R}+((1+\kappa)R)^{(d-1)/2}\abs{\mu}\right).
\end{eqnarray*}
Since $v-w_1 \in \mathcal{H}_{h,0}(B_{(1+\delta_1)R},\Gamma_{\rho},0)$, we can use Lemma~\ref{lem:lowdimappHS} again
(this time with $\mu = 0$) 
and get an approximation $w_2$ of $v-w_1$ in a subspace $W_2$ 
of $\mathcal{H}_{h,0}(B_{(1+\delta_1)R},\Gamma_{\rho},0)$ with
$\dim W_2\leq C\left(\frac{(1+\kappa)R}{H}\right)^d$. Arguing as for $j=1$, we get
\begin{equation*}
\triplenorm{v-w_1-w_2}_{h,(1+\delta_2)R} \leq q \triplenorm{v-w_1}_{h,(1+\delta_1)R} \leq  
q^2 \left(\triplenorm{v}_{h,(1+\kappa)R}+((1+\kappa)R)^{(d-1)/2}\abs{\mu}\right).
\end{equation*}
Continuing this process $k-2$ times leads to an approximation $\widehat{w} := \sum_{j=1}^kw_i$ 
in the space $\widehat{W}_k := \sum_{j=1}^{k}W_j$ 
of dimension $\dim \widehat{W}_k \leq Ck\left(\frac{(1+\kappa)R}{H}\right)^d=
C_{\rm dim} ((1+\kappa^{-1}) q^{-1})^dk^{d+1}$ such that
\begin{equation} \label{eq:tmp:iterationargument}
\triplenorm{v-\widehat{w}}_{h,(1+\kappa/2)R}=\triplenorm{v-\widehat{w}}_{h,(1+\delta_k)R} 
\leq q^k\left(\triplenorm{v}_{h,(1+\kappa)R}+((1+\kappa)R)^{(d-1)/2}\abs{\mu}\right).
\end{equation}

%Finally, we use 
%the trace inequality and $h/R \leq \kappa$ to estimate
%\bee 
%\|\vjump - [\gamma_0\widehat w]\|_{H^{1/2}(B_R \cap \Gamma)} \lesssim 
%\frac{1}{\kappa R}\norm{v}_{L^2(B_{(1+\kappa/2)R})}+\norm{\nabla v}_{L^2(B_{(1+\kappa/2)R})}
%\leq C \frac{R(1+\kappa)}{h} \triplenorm{v - \widehat w}_{h,(1+\kappa/2)R}
%\ee
% to conclude the argument. 

The last step of the argument is to use the multiplicative trace inequality. 
With a suitable cut-off function $\eta$ supported by $B_{(1+\kappa/2)R}$ and 
$\|\nabla \eta\|_{L^\infty} \lesssim (\kappa R)^{-1}$ as well as $\eta \equiv 1$ on $B_R$, we get 
for $z \in H^1(B_{(1+\kappa/2)R}\setminus\Gamma)$ 
\begin{align*}
\|[\gamma_0 z]\|^2_{L^2(B_R\cap\Gamma)} &\leq  \|[\gamma_0 (\eta z)]\|^2_{L^2(\Gamma)} 
\lesssim \|\eta z\|_{L^2(\R^d\setminus\Gamma)}
\|\eta z\|_{H^1(\R^d\setminus\Gamma)} \\
& \lesssim \frac{1}{\kappa R}\|z\|^2_{L^2(B_{(1+\kappa/2)R})} + 
 \|z\|_{L^2(B_{(1+\kappa/2)R})} \|\nabla z\|_{L^2(B_{(1+\kappa/2)R}\setminus\Gamma)} \\
& \lesssim \frac{1}{\kappa R}\|z\|^2_{L^2(B_{(1+\kappa/2)R})} + 
 h^{-1}\|z\|_{L^2(B_{(1+\kappa/2)R})}^2+ h\|\nabla z\|_{L^2(B_{(1+\kappa/2)R}\setminus\Gamma)}^2 \\
&\lesssim \left((1+\kappa/2)R\right)^2h^{-1} \triplenorm{z}^2_{h,(1+\kappa/2)R},
\end{align*}
where the last step follows from the assumption $\frac{h}{\kappa R}\leq 1$.
Using this estimate for $z=v-\widehat{w}$ together with \eqref{eq:tmp:iterationargument} gives 
\begin{align*}
\min_{\widehat w \in \widehat W_k} 
\|[\gamma_0 v] - [\gamma_0 \widehat w]\|_{L^2(B_R \cap \Gamma)} 
&\leq C_{\rm low} 
(1 + \kappa)R \, h^{-1/2} q^k \left[
\triplenorm{v}_{h,(1+\kappa)R} + \left((1+\kappa)R\right)^{(d-1)/2}|\mu| 
\right].
\end{align*}
This concludes the proof.
\epro

\begin{remark}
{\rm 
The proof of Lemma~\ref{cor:lowdimappHS} shows that approximation results in $H^{1/2}$ can be achieved at
the expense of an additional factor $h^{-1/2}$: With the cut-off function $\eta$ that is used at the end of the 
proof of Lemma~\ref{cor:lowdimappHS}, we can can bound for $z \in H^1(B_{(1+\kappa/2)R}\setminus\Gamma)$
$$
\|[\gamma_0 (\eta z)]\|_{H^{1/2}(\Gamma)} \lesssim 
\|\eta z\|_{H^{1}(\R^d\setminus\Gamma)} \lesssim 
(1 +\kappa/2)R\,  h^{-1} \triplenorm{z}_{h,B_{(1+\kappa/2)R}}
$$
Hence, with the spaces $\widehat W_k$ of Lemma~\ref{cor:lowdimappHS} one gets 
$$
\min_{\widehat w \in \widehat W_k} 
\|[\gamma_0 (\eta (v - \widehat w))]\|_{H^{1/2}(\Gamma)} 
\lesssim C_{\rm low}^\prime (1+\kappa)R\, h^{-1} q^k 
\left[
\triplenorm{v}_{h,(1+\kappa)R} + \left((1+\kappa)R\right)^{(d-1)/2}|\mu| 
\right]. 
$$
}\eremk
\end{remark}

Now we are able to prove the main result of this section.

\bpro[of Theorem~\ref{thm:function-approximationHypSing}]
Choose $\kappa = \frac{1}{1+\eta}$. By assumption, we have 
$\dist(B_{R_\tau},B_{R_\sigma}) 
\ge \eta^{-1} \diam B_{R_\tau} = \sqrt{d} \eta^{-1} R_{\tau}$. 
In particular, this implies 
\bee\dist(B_{(1+\kappa) R_\tau},B_{R_\sigma}) \geq \dist(B_{R_\tau},B_{R_\sigma}) - 
\kappa R_{\tau} \sqrt{d} \geq \sqrt{d}R_{\tau}(\eta^{-1}-\kappa) = 
\sqrt{d}R_{\tau}\left(\frac{1}{\eta} - \frac{1}{1+\eta}\right) >0.\ee

Let $\phi_h \in S^{p,1}(\T_h)$ solve (\ref{eq:modelHS}). Recall from (\ref{eq:discrete-stability}) that 
\begin{equation*}
\norm{\phi_h}_{H^{1/2}(\Gamma)} + |\lambda| \lesssim \norm{\Pi^{L^2} f}_{L^2(\Gamma)}. 
\end{equation*}

The potential $u = \widetilde{K}\phi_h$ then satisfies 
$u \in \H_{h,0}(B_{(1+\kappa) R_\tau},\Gamma,\lambda)$. 
%Fix $\alpha > 0$ (e.g., $\alpha = 1$). 
%By the Rellich compactness theorem, $\left(\skp{W\phi_h,\phi_h} + \alpha\abs{\skp{\phi_h,1}}^2\right)^{1/2}$
%defines an equivalent norm on $H^{1/2}(\Gamma)$. 
%Together with the discrete boundary integral equation \eqref{eq:modelHS} this implies
%\bea
%\norm{\phi_h}_{H^{1/2}(\Gamma)}^2 &\lesssim&  \skp{W\phi_h,\phi_h} + \alpha\abs{\skp{\phi_h,1}}^2 =  
%\skp{\Pi^{L^2}f,\phi_h} \lesssim \norm{\Pi^{L^2}f}_{L^{2}(\Gamma)}\norm{\phi_h}_{H^{1/2}(\Gamma)}. 
%\eea
Furthermore, the boundedness of $\widetilde{K}: H^{1/2}(\Gamma) \ra H^1_{\text{loc}}(\R^d)$ 
and $\frac{h}{R_{\tau}}<1$ lead to
\bea
\triplenorm{\widetilde{K} \phi_h}_{h,R_{\tau}(1+\kappa)} &\leq& 
2\left(1+\frac{1}{R_\tau}\right)\norm{\widetilde{K} \phi_h}_{H^1(B_{2R_{\tau}})} \\&\lesssim& 
\left(1+\frac{1}{R_\tau}\right) \norm{\phi_h}_{H^{1/2}(\Gamma)} 
\lesssim \left(1+\frac{1}{R_\tau}\right)\norm{\Pi^{L^2}f}_{L^{2}(\Gamma)}.
\eea
%Moreover, using $\psi_h \equiv 1$ as a test function in \eqref{eq:modelHS}, we get that 
%$\lambda=\skp{f,1}=\skp{\Pi^{L^2}f,1}$, which implies that $\abs{\lambda} \lesssim \norm{\Pi^{L^2}f}_{L^{2}(\Gamma)}$,
%and therefore we have a bound for the right-hand side in \eqref{eq:lowdimappHS}.
We are now in position to define the space $W_k$, for which we distinguish two cases. 
\newline
{\bf Case 1:} The condition \eqref{eq:cor:lowdimapp-1HS} is satisfied with $R = R_{\tau}$. 
With the space $\widehat{W}_k$ provided by Lemma~\ref{cor:lowdimappHS} we set 
$W_k := \{[\gamma_0 \widehat{w}] : \widehat{w} \in \widehat{W}_k\}$. 
Then, Lemma~\ref{cor:lowdimappHS} and $R_{\tau}\leq 2\diam(\Omega)$ as well as 
$\kappa \leq 1$ lead to

 \begin{eqnarray*}
\min_{w\in W_k}\norm{\phi_h-w}_{L^2(B_{R_{\tau}}\cap \Gamma)} &\lesssim&  
(1+\kappa)R_{\tau}\, h^{-1/2} q^k 
\left(\triplenorm{\widetilde{K}\phi_h}_{h,(1+\kappa)R_{\tau}}+\abs{\lambda} \right) \\
 &\lesssim& 
(1+\kappa)(R_{\tau}+1)h^{-1/2}q^k\norm{\Pi^{L^2}f}_{L^{2}(\Gamma)} 
 \lesssim h^{-1/2} q^k \norm{\Pi^{L^2}f}_{L^{2}(\Gamma)} ,
\end{eqnarray*}
and the dimension of $W_k$ is bounded by 
$$
\dim W_k \leq C_{\rm dim} \left(\frac{1+\kappa^{-1}}{q}\right)^d k^{d+1} = C_{\rm dim} (2 + \eta)^d q^{-d} k^{d+1}. 
$$
{\bf Case 2:} The condition \eqref{eq:cor:lowdimapp-1HS} is not satisfied with $R = R_{\tau}$. 
Then, we select 
$W_k:= \left\{w|_{B_{R_\tau}\cap \Gamma} : w \in S^{p,1}({\mathcal T}_h)\right\}$ and 
the minimum in \eqref{eq:thm:function-approximation-1HS} is obviously zero. 
By the choice of $\kappa$ and $\frac{h}{R_{\tau}}>\frac{\kappa q}{32k\max\{C_{\rm app},1\}}$, 
the dimension of $W_k$ is bounded by 
$$
\dim W_k \lesssim \left(\frac{R_{\tau}}{h}\right)^{d-1} 
\lesssim \left(\frac{32 k \max\{C_{\rm app},1\}}{\kappa q}\right)^{d-1} 
\simeq \left((1+\eta) q^{-1} k \right)^{d-1} 
\lesssim (2+\eta)^d q^{-d} k^{d+1}. 
$$
This concludes the proof.
 of the first inequality in \eqref{eq:thm:function-approximation-1HS}.  
The second inequality in \eqref{eq:thm:function-approximation-1HS} follows 
from the $L^2(\Gamma)$-stability of the $L^2(\Gamma)$-orthogonal projection.
\epro

\section{$\H$-matrix approximation}
\label{sec:H-matrix-approximation}
In order to obtain an $\H$-matrix approximating $\Win$ (cf. (\ref{eq:Wtilde}))
we start with the construction of a low-rank approximation of an admissible matrix block.

% Theorem: Low rank approximation of an admissible matrix block
\begin{theorem}\label{th:blockapprox}
Fix an admissibility parameter $\eta > 0$ and $q\in (0,1)$. 
Let the cluster pair $(\tau,\sigma)$ be $\eta$-admissible. 
Then, for every $k \in \mathbb{N}$, there are matrices
$\mathbf{X}_{\tau\sigma} \in \mathbb{R}^{\abs{\tau}\times r}$, $\mathbf{Y}_{\tau\sigma} \in \mathbb{R}^{\abs{\sigma}\times r}$ of rank $r \leq C_{\rm dim} (2+\eta)^d q^{-d}k^{d+1}$
such that
\begin{equation}
\norm{\Win|_{\tau \times \sigma} - \mathbf{X}_{\tau\sigma}\mathbf{Y}_{\tau\sigma}^T}_2 
\leq C_{\rm apx}   N^{(2d-1)/(2d-2)}  q^k.
\end{equation}
The constants $C_{\rm apx}$, $C_{\rm dim}>0$ depend only on $\Omega$, $d$, 
the $\gamma$-shape regularity of the quasiuniform triangulation $\mathcal{T}_h$, and $p$.
\end{theorem}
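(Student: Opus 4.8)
The plan is to transfer the function--space estimate of Theorem~\ref{thm:function-approximationHypSing} to the matrix level by means of the basis isometry \eqref{eq:basisisomorphism} and the locality of $\mathcal B_h$. By symmetry of $\mathbf V = \Win|_{N\times N}$ --- which for the block $\sigma\times\tau$ merely transposes the roles and the rank-$r$ factors --- I may assume that the bounding box of $\tau$ realises the admissibility, i.e.\ $\diam(B_{R_\tau})\le\eta\,\dist(B_{R_\tau},B_{R_\sigma})$, so that the hypotheses of Theorem~\ref{thm:function-approximationHypSing} are in force.

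First I would fix $\mathbf b\in\R^N$ with $\supp\mathbf b\subset\sigma$, let $(\mathbf x,\lambda) = \Win\,(\mathbf b,0)^T$ (so $\mathbf x = \mathbf V\mathbf b$), and observe that $\phi_h := \Phi(\mathbf x)$ is the Galerkin solution of \eqref{eq:modelHS} with the \emph{discrete} right--hand side $\ell_{\mathbf b}\colon\psi_h\mapsto\sum_j\mathbf b_j(\Phi^{-1}\psi_h)_j$. The key point is that $\ell_{\mathbf b}$ is a functional on $S^{p,1}(\T_h)$, not an $L^2$--function: if one instead took $f=\Phi(\mathbf b)\in L^2(\Gamma)$, the corresponding discrete right--hand side would be $\mathbf M\mathbf b$ with $\mathbf M$ the (for $p\ge1$ non-diagonal) mass matrix $\mathbf M_{jk}=\skp{\psi_k,\psi_j}$, so one would only approximate $(\mathbf V\mathbf M)|_{\tau\times\sigma}$, which cannot be cured since $\mathbf M^{-1}$ is not data--sparse. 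Working with $\ell_{\mathbf b}$ directly one checks, exactly as for \eqref{eq:orthogonalityHS} (using $\supp\psi_j\subset B_{R_\sigma}$ for $j\in\sigma$ and the separation of the boxes), that $\ell_{\mathbf b}(\psi_h)=0$ for every $\psi_h\in S^{p,1}(\T_h)$ supported in $B_{(1+\kappa)R_\tau}$, $\kappa=(1+\eta)^{-1}$; hence $u:=\widetilde K\phi_h\in\H_{h,0}(B_{(1+\kappa)R_\tau},\Gamma,\lambda)$ as in the proof of Theorem~\ref{thm:function-approximationHypSing}. Moreover, \eqref{eq:discrete-stability-1}, Lemma~\ref{lem:inverseinequality} and \eqref{eq:basisisomorphism} give $\norm{\phi_h}_{H^{1/2}(\Gamma)}+|\lambda|\lesssim\sup_{\psi_h}\ell_{\mathbf b}(\psi_h)/\norm{\psi_h}_{H^{1/2}(\Gamma)}\lesssim h^{-(d-1)/2}\norm{\mathbf b}_2$, and the mapping properties of $\widetilde K$ then yield $\triplenorm{u}_{h,(1+\kappa)R_\tau}+|\lambda|\lesssim h^{-(d-1)/2}\norm{\mathbf b}_2$.

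Next I would follow the argument in the proof of Theorem~\ref{thm:function-approximationHypSing}, with $\ell_{\mathbf b}$ in place of $f$ --- only the two properties above are used there. In the ``large box'' case (condition \eqref{eq:cor:lowdimapp-1HS} with $R=R_\tau$), Lemma~\ref{cor:lowdimappHS} supplies a space $W_k\subset S^{p,1}(\T_h)$ with $\dim W_k\le C_{\rm dim}(2+\eta)^d q^{-d}k^{d+1}=:r$, independent of $\mathbf b$ up to one extra dimension coming from the affine parameter $\lambda=\lambda(\mathbf b)$ (a fixed linear functional of $\mathbf b$), for which the minimiser $w^\ast(\mathbf b)\in W_k$ depends linearly on $\mathbf b$ and $\norm{\phi_h-w^\ast(\mathbf b)}_{L^2(B\cap\Gamma)}\lesssim h^{-1/2}q^k\bigl(\triplenorm{u}_{h,(1+\kappa)R_\tau}+|\lambda|\bigr)\lesssim h^{-d/2}q^k\norm{\mathbf b}_2$ on a concentric box $B$ slightly larger than $B_{R_\tau}$ (still admissible w.r.t.\ $B_{R_\sigma}$). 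I then let $\mathbf X_{\tau\sigma}\in\R^{|\tau|\times r}$ have as its columns the $\tau$--restrictions of the coefficient vectors of a basis of $W_k$, and pick $\mathbf Y_{\tau\sigma}\in\R^{|\sigma|\times r}$ so that $\mathbf X_{\tau\sigma}\mathbf Y_{\tau\sigma}^T\mathbf b|_\sigma=(\Phi^{-1}w^\ast(\mathbf b))|_\tau$ for all admissible $\mathbf b$; this is possible precisely because $w^\ast$ is linear in $\mathbf b$ and takes values in the fixed space $W_k$. Then $(\mathbf V|_{\tau\times\sigma}-\mathbf X_{\tau\sigma}\mathbf Y_{\tau\sigma}^T)\mathbf b|_\sigma=\bigl(\Phi^{-1}(\phi_h-w^\ast(\mathbf b))\bigr)|_\tau$, and the local form of \eqref{eq:basisisomorphism} --- using that each $\psi_j$, $j\in\tau$, is supported in $B_{R_\tau}$, together with an element--wise inverse estimate (Lemma~\ref{lem:inverseinequality}) for the $O(1)$ basis functions straddling $\partial B_{R_\tau}$ --- gives $\norm{(\Phi^{-1}(\phi_h-w^\ast))|_\tau}_2\lesssim h^{-(d-1)/2}\norm{\phi_h-w^\ast}_{L^2(B\cap\Gamma)}\lesssim h^{-(2d-1)/2}q^k\norm{\mathbf b}_2$. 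Taking the supremum over $\norm{\mathbf b}_2=1$ and using $h\simeq N^{-1/(d-1)}$ turns this into $\norm{\mathbf V|_{\tau\times\sigma}-\mathbf X_{\tau\sigma}\mathbf Y_{\tau\sigma}^T}_2\lesssim N^{(2d-1)/(2d-2)}q^k$. In the remaining ``small box'' case one has $|\tau|\lesssim(R_\tau/h)^{d-1}\lesssim\bigl((1+\eta)q^{-1}k\bigr)^{d-1}\le r$, and one simply takes $\mathbf X_{\tau\sigma}\mathbf Y_{\tau\sigma}^T=\mathbf V|_{\tau\times\sigma}$ with zero error and rank $\le|\tau|\le r$.

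I expect the main obstacle to be exactly this passage from function--space to matrix norms: identifying the correct discrete (non-$L^2$) right--hand side, which forces the additional factor $h^{-(d-1)/2}\simeq N^{1/2}$ responsible for the exponent $(2d-1)/(2d-2)$, and localising the global norm equivalence \eqref{eq:basisisomorphism} to the cluster $\tau$ and a mild enlargement of its bounding box. The remaining bookkeeping --- constants of the form $\Omega,d,p,\gamma$, the harmless $+1$ in the rank, and checking $\bigl((1+\eta)q^{-1}k\bigr)^{d-1}\le C_{\rm dim}(2+\eta)^dq^{-d}k^{d+1}$ --- is routine.
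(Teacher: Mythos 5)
Your overall architecture matches the paper's: handle small blocks by taking the exact block, and otherwise transfer Theorem~\ref{thm:function-approximationHypSing} to the matrix level via the basis isomorphism \eqref{eq:basisisomorphism}, with the factor $h^{-(d-1)/2}\simeq N^{1/2}$ from the vector-to-function conversion combining with the $h^{-1/2}$ of \eqref{eq:thm:function-approximation-1HS} to give the exponent $(2d-1)/(2d-2)$. The one substantive place where you diverge is the treatment of the datum $\mathbf b$, and there your central claim --- that one is forced to work with the discrete functional $\ell_{\mathbf b}$ because any $L^2$ right-hand side drags in the mass matrix --- is mistaken. It is true for the naive choice $f=\Phi(\mathbf b)$, but the paper sidesteps it by introducing a \emph{local dual system}: functionals $\lambda_i$ with $\lambda_i(\psi_j)=\delta_{ij}$ and $\norm{\lambda_i(w)\psi_i}_{L^2(\Gamma)}\lesssim\norm{w}_{L^2(\supp\psi_i)}$ (as in \cite{ScottZhang}), and then sets $f:=\Lambda_{\mathcal I}^*\mathbf b|_\sigma=\sum_{i\in\sigma}b_i\lambda_i$. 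This $f$ is a genuine $L^2(\Gamma)$ function with $\skp{f,\psi_j}=b_j$ for $j\in\sigma$, $\supp f\subset B_{R_\sigma}\cap\Gamma$ and $\norm{f}_{L^2(\Gamma)}\lesssim h^{-(d-1)/2}\norm{\mathbf b}_2$, so Theorem~\ref{thm:function-approximationHypSing} applies verbatim as a black box. Your route instead requires re-running the interior machinery (stability bound \eqref{eq:discrete-stability-1} and orthogonality \eqref{eq:orthogonalityHS}) for a functional right-hand side; that can be made to work, but it duplicates the proof of Theorem~\ref{thm:function-approximationHypSing} rather than invoking it.

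Two steps in your version need more than you give them, and both are cured by the same dual system. First, your assertion that $\ell_{\mathbf b}(\psi_h)=\sum_{j\in\sigma}b_j(\Phi^{-1}\psi_h)_j$ vanishes for all $\psi_h$ supported in (an enlargement of) $B_{R_\tau}$ requires that the coefficient functionals $\psi_h\mapsto(\Phi^{-1}\psi_h)_j$ be \emph{local} (determined by $\psi_h$ near $\supp\psi_j$); this is not a consequence of \eqref{eq:basisisomorphism} alone, though it holds for the standard bases. Second, the bound $\norm{(\Phi^{-1}v)|_\tau}_2\lesssim h^{-(d-1)/2}\norm{v}_{L^2(B\cap\Gamma)}$ is a \emph{localized} lower bound in \eqref{eq:basisisomorphism}, again needing locality of the coefficient functionals; the paper gets it for free from $\norm{\Lambda_\tau v}_2\lesssim h^{-(d-1)/2}\norm{\Phi_\tau\Lambda_\tau v}_{L^2(\Gamma)}\lesssim h^{-(d-1)/2}\norm{v}_{L^2(\Gamma_\tau)}$ with $\Gamma_\tau\subset B_{R_\tau}$. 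Finally, your construction of $\mathbf Y_{\tau\sigma}$ (``pick $\mathbf Y$ so that $\mathbf X\mathbf Y^T\mathbf b|_\sigma=(\Phi^{-1}w^*(\mathbf b))|_\tau$'') is legitimate since $\mathbf b\mapsto w^*(\mathbf b)$ is linear with range in the fixed space $W_k$, but the paper's choice --- $\mathbf X_{\tau\sigma}$ an orthonormal basis of $\Lambda_\tau W_k$ and $\mathbf Y_{\tau\sigma}:=(\Win|_{\tau\times\sigma})^T\mathbf X_{\tau\sigma}$, so that $\mathbf X_{\tau\sigma}\mathbf X_{\tau\sigma}^T$ is the orthogonal projection and the error is automatically bounded by the error to any competitor --- avoids having to discuss the minimizer's linearity at all. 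With these repairs your argument closes; as written, it rests on an unstated locality assumption on the basis and on a claimed impossibility that the paper's own proof refutes.
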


\bpro
If $C_{\rm dim} (2+\eta)^d q^{-d} k^{d+1} \geq \min (\abs{\tau},\abs{\sigma})$, we use the exact matrix block 
$\mathbf{X}_{\tau\sigma}=\Win|_{\tau \times \sigma}$ and 
$\mathbf{Y}_{\tau\sigma} = I \in \mathbb{R}^{\abs{\sigma}\times\abs{\sigma}}$. 

If $C_{\rm dim}(2+\eta)^d q^{-d} k^{d+1}  < \min (\abs{\tau},\abs{\sigma})$, we employ the approximation
result of Theorem~\ref{thm:function-approximationHypSing} in the following way.
Let  $\lambda_i:L^2(\Gamma) \rightarrow \mathbb{R}$ be continuous linear functionals on $L^2(\Gamma)$
 satisfying $\lambda_i(\psi_j) = \delta_{ij}$, as well as the stability estimate 
$\norm{\lambda_i(w)\psi_i}_{L^2(\Gamma)} \lesssim \norm{w}_{L^2(\supp \psi_i)}$ for $w \in L^2(\Gamma)$,
where the suppressed constant depends only on the shape-regularity of the quasiuniform mesh $\mathcal{T}_h$.
For the existence of such functionals, we refer to \cite{ScottZhang}.
 We define $\R^{\tau}:= \{\mathbf{x} \in \R^N \; : \; x_i = 0 \; \forall \; i \notin \tau \}$ and the mappings 
 \begin{equation*}
\Lambda_{\tau} : L^2(\Gamma) \rightarrow \mathbb{R}^{\tau}, v \mapsto (\lambda_i(v))_{i \in \tau} \;\text{and} 
\; \Phi_{\tau}: \mathbb{R}^{\tau} \rightarrow S^{p,1}({\mathcal T}_h),\; \mathbf{x} \mapsto \sum_{j\in\tau} x_j \psi_j.
\end{equation*}
The interpolation operator
$\Phi_{\tau}\Lambda_{\tau}$ is, due to our assumptions on the functionals $\lambda_i$, 
stable in $L^2$ and for a piecewise polynomial function $\widetilde{\phi}\in S^{p,1}(\T_h)$ we get 
$\Phi_{\tau}(\Lambda_{\tau} \widetilde{\phi}) = \widetilde{\phi}|_{\Gamma_{\tau}}$
with $\Gamma_{\tau}:=\bigcup_{i \in \tau} \supp \psi_i \subset B_{R_{\tau}}$. 
For $\mathbf{x} \in \R^{\tau}$, \eqref{eq:basisisomorphism} implies
\bee
Ch^{(d-1)/2}\norm{\mathbf{x}}_{2} \leq \norm{\Phi_\tau(\mathbf{x})}_{L^2(\Gamma)} \leq 
\widetilde{C} h^{(d-1)/2}\norm{\mathbf{x}}_{2}, \quad \forall\mathbf{x} \in \R^{{\tau}}. 
\ee
The adjoint $\Lambda_{\mathcal{I}}^* : \R^N \ra L^2(\Gamma)'\simeq L^2(\Gamma), 
\mathbf{b}\mapsto\sum_{i\in \mathcal{I}}b_i\lambda_i$ 
of $\Lambda_{\mathcal{I}}$ satisfies, 
because of \eqref{eq:basisisomorphism} and the $L^2$-stability of $\Phi_{\mathcal{I}} \Lambda_{\mathcal{I}}$,
\bee
\norm{\Lambda_{\mathcal{I}}^* \mathbf{b}}_{L^2(\Gamma)} = \sup_{w \in L^2(\Gamma)}
\frac{\skp{\mathbf{b},\Lambda_{\mathcal{I}} w}_2}{\norm{w}_{L^2(\Gamma)}} 
  \lesssim \norm{\mathbf{b}}_2 \sup_{w \in L^2(\Gamma)}
\frac{h^{-(d-1)/2}\norm{\Phi_{\mathcal{I}}\Lambda_{\mathcal{I}} w}_{L^2(\Gamma)}}{\norm{w}_{L^2(\Gamma)}} 
\leq h^{-(d-1)/2}\norm{\mathbf{b}}_2. 
\ee
Let $\mathbf{b} \in \R^N$. Defining $f := \Lambda_{\mathcal{I}}^*\mathbf{b}|_{\sigma}$,
we get $b_i =\skp{f,\psi_i}$ for $i \in \sigma$ and $\supp f \subset B_{R_{\sigma}}\cap\Gamma$.
%Consider an $f \in L^2(\Gamma)$ with $\supp f \subset B_{R_\sigma}$. If we set
%$\mathbf{b} = (\skp{f,\chi_i})_{i\in {\mathcal{I}}}$, then 
%we have $(\Lambda_{\mathcal{I}}^* \mathbf{b})(\chi_i) = b_i = \skp{f,\chi_i} $. 
%Therefore, $f$ and $\Lambda_{\mathcal{I}}^* \mathbf{b} = \Pi^{L^2} f$ lead to the same 
%Galerkin approximation $\phi_h$ of (\ref{eq:model}). 
Theorem~\ref{thm:function-approximationHypSing} provides a 
finite dimensional space $W_k$ and an element $w \in W_k$ 
that is a good approximation to the Galerkin solution $\phi_h|_{B_{R_{\tau}}\cap \Gamma}$.
It is important to note that the space $W_k$ is constructed independently of the function $f$; 
it depends only on the cluster pair $(\tau,\sigma)$. 
The estimate \eqref{eq:basisisomorphism}, the approximation result from 
Theorem~\ref{thm:function-approximationHypSing},
and $\norm{\Pi^{L^2}f}_{L^2(\Gamma)} \leq \norm{f}_{L^2(\Gamma)}
\leq \norm{\Lambda_{\mathcal{I}}^*\mathbf{b}}_{L^2(\Gamma)}
\lesssim h^{-(d-1)/2}\norm{\mathbf{b}}_2$ imply
\bea
\norm{\Lambda_{\tau} \phi_h - \Lambda_{\tau} w}_2 &\lesssim& 
h^{-(d-1)/2}\norm{\Phi_{\tau}(\Lambda_{\tau} \phi_h-\Lambda_{\tau} w)}_{L^2(\Gamma)} 
\leq h^{-(d-1)/2}\norm{\phi_h-w}_{L^2(B_{R_{\tau}}\cap \Gamma)} \\
  &\lesssim&  h^{-(d-1)/2-1/2} \, 
             q^k\norm{\Pi^{L^2}f}_{L^2(\Gamma)}  \lesssim 
 h^{-(2d-1)/2} \,q^k\norm{\mathbf{b}}_{2}.
\eea
In order to translate this approximation result to the matrix level, let 
%Let $V_k$ be the finite dimensional subspace constructed in Lemma \ref{lem:lowdimapp}. 
%We define 
\bee
\mathcal{W} := \{\Lambda_{\tau} w \; : \; w \in W_k \}.
\ee
Let the columns of $\mathbf{X}_{\tau\sigma}$ be an orthogonal basis of the space $\mathcal{W}$. 
Then, the rank of $\mathbf{X}_{\tau\sigma}$ is bounded by $\dim W_k \leq C_{\rm dim}  (2+\eta)^d q^{-d}k^{d+1} $. 
Since $\mathbf{X}_{\tau\sigma} \mathbf{X}_{\tau\sigma}^T$ is the orthogonal projection from 
$\R^N$ onto $\mathcal{W}$, 
we get that $z:=\mathbf{X}_{\tau\sigma} \mathbf{X}_{\tau\sigma}^T \Lambda_{\tau} \phi_h$ 
is the best approximation of $\Lambda_{\tau} \phi_h$ in $\mathcal{W}$ and arrive at 
\been 
\label{eq:matrix-level-estimate-function}
\norm{\Lambda_{\tau} \phi_h-z}_2 \leq \norm{\Lambda_{\tau} \phi_h-\Lambda_{\tau} w}_2 \lesssim 
 h^{-(2d-1)/2} \, q^k\norm{\mathbf{b}}_2
\simeq  \, N^{(2d-1)/(2d-2)}  q^k\norm{\mathbf{b}}_2.
\een
Note that $\Lambda_{\tau} \phi_h = \Win|_{\tau\times \sigma} \mathbf{b}|_\sigma$.
If we define $\mathbf{Y}_{\tau,\sigma} := \Win|_{\tau\times \sigma}^{T}\mathbf{X}_{\tau\sigma}$, 
we thus get $z = \mathbf{X}_{\tau\sigma} \mathbf{Y}_{\tau\sigma}^T \mathbf{b}|_\sigma$. The bound 
\eqref{eq:matrix-level-estimate-function} expresses 
\begin{equation}
\label{eq:matrix-level-estimate-function-1}
\norm{\left(\Win|_{\tau\times \sigma} - \mathbf{X}_{\tau\sigma} \mathbf{Y}_{\tau\sigma}^T \right)\mathbf{b}|_\sigma}_2 
= \norm{\Lambda_{\tau} \phi_h-z}_2
\lesssim  \, N^{(2d-1)/(2d-2)}  q^k\norm{\mathbf{b}}_2.
\end{equation}
The space $W_k$ depends only on the cluster pair $(\tau,\sigma)$, and the estimate 
\eqref{eq:matrix-level-estimate-function-1} is valid for any $\mathbf b$. 
This concludes the proof. 
\epro

The following lemma gives an estimate for the global spectral norm by the local spectral norms, 
which we will use in combination with Theorem \ref{th:blockapprox} to derive our main result, 
Theorem \ref{th:Happrox}. \\

% Lemma: Estimate on the global spectral norm by the local spectral norms
\blem[{\cite{GrasedyckDissertation},\,\cite[Lemma 6.5.8]{HackbuschBuch},\,\cite{BoermBuch}}]\label{lem:spectralnorm}
Let $\mathbf{M} \in \R^{N\times N}$ and $P$ be a partitioning of ${\mathcal{I}}\times {\mathcal{I}}$. Then,
\bee
\norm{\mathbf{M}}_2 \leq C_{\rm sp} \left(\sum_{\ell=0}^{\infty}\max\{\norm{\mathbf{M}|_{\tau\times \sigma}}_2 : (\tau,\sigma) \in P, \level(\tau) = \ell\}\right),
\ee
where the sparsity constant $C_{\rm sp}$ is defined in \eqref{eq:sparsityConstant}.
\elem

%Combining the blockwise estimate with this lemma we can derive an $\H$-matrix approximation of $\Vin$. \\
Now we are able to prove our main result, Theorem \ref{th:Happrox}.

% Theorem: Main Result, H-matrix approximation of inverse BEM Matrix

\bpro[of Theorem \ref{th:Happrox}]
Theorem \ref{th:blockapprox} provides matrices $\mathbf{X}_{\tau\sigma} \in \R^{\abs{\tau}\times r}$, $\mathbf{Y}_{\tau\sigma} \in \R^{\abs{\sigma}\times r}$, 
so we can define the $\H$-matrix $\mathbf{W}_{\H}$ by 
\bee
\mathbf{W}_{\H} = \left\{
\begin{array}{l}
 \mathbf{X}_{\tau\sigma}\mathbf{Y}_{\tau\sigma}^T \quad \;\textrm{if}\hspace{2mm} (\tau,\sigma) \in P_{\text{far}}, \\
 \Win|_{\tau \times \sigma} \quad \textrm{otherwise}.
 \end{array}
 \right.
\ee
On each admissible block $(\tau,\sigma) \in \Pfar$ we can use the blockwise estimate of Theorem \ref{th:blockapprox} 
and get
\bee
\norm{(\Win - \mathbf{W}_{\H})|_{\tau \times \sigma}}_2 \leq 
C_{\rm apx}  N^{(2d-1)/(2d-2)} q^k.
\ee
On inadmissible blocks, the error is zero by definition. 
Therefore, Lemma \ref{lem:spectralnorm} leads to
\bea
\norm{\Win - \mathbf{W}_{\H}}_2 &\leq& C_{\rm sp} 
\left(\sum_{\ell=0}^{\infty}\text{max}\{\norm{(\Win - \mathbf{W}_{\H})|_{\tau \times \sigma}}_2 : (\tau,\sigma) \in P, \level(\tau) = \ell\}\right) \\
 &\leq& C_{\rm apx}C_{\rm sp}  N^{(2d-1)/(2d-2)} q^k {\rm depth}(\mathbb{T}_{\mathcal{I}}).
\eea
With $r = C_{\rm dim}(2+\eta)^dq^{-d}k^{d+1}$,
the definition $b = -\frac{\ln(q)}{C_{\rm dim}^{1/(d+1)}}q^{d/(d+1)}(2+\eta)^{-d/(1+d)} > 0$ 
leads to $q^k = e^{-br^{1/(d+1)}}$, and hence
\begin{equation*}
\norm{\Win - \mathbf{W}_{\mathcal{H}}}_2 \leq C_{\rm apx}C_{\rm sp} 
 N^{(2d-1)/(2d-2)}  {\rm depth}(\mathbb{T}_{\mathcal{I}})e^{-br^{1/(d+1)}},
\end{equation*}
which concludes the proof.
\epro

\section{Stabilized Galerkin discretization}
\label{sec:stabGalerkin}
In the previous section, we studied a saddle point formulation of the hyper-singular integral operator. 
It is possible to reformulate the hyper-singular integral equation as a positive definite system by 
a rank-one correction that does not alter the solution. In numerical computations, this reformulation is
often preferred, and we therefore study it. Furthermore, it will be the starting point for the $\H$-matrix
Cholesky factorization studied in Section~\ref{sec:LU-decomposition} below. 

The {\em stabilized Galerkin matrix} $\mathbf W^{\text{st}}\in\mathbb R^{N\times N}$ 
is obtained from the matrix $\mathbf W \in \mathbb R^{N \times N}$ as follows: 
\been\label{eq:MatrixHypsing} 
\mathbf W^{\text{st}}_{jk} = \langle W\psi_k,\psi_j\rangle + \alpha\skp{\psi_k,1}\skp{\psi_j,1} = 
\mathbf{W}_{jk}+ \alpha \mathbf{B}_k\mathbf{B}_j, 
\quad \forall j,k = 1,\dots,N. \een
Here, $\alpha > 0$ is a fixed stabilization parameter. 
The matrix $\mathbf W^{\text{st}}$ is symmetric and positive definite.
With the notation from \eqref{eq:matrixHypsing} the stabilized matrix $\mathbf W^{\text{st}}$ can be written as
\bee
\mathbf W^{\text{st}} = \mathbf{W} + \alpha \mathbf{B}\mathbf{B}^T.
\ee
The interest in the stabilized matrix $\mathbf W^{\text{st}}$ arises from the fact that 
solving the linear system 
\bee
\boldsymbol{\mathcal{W}}\bpm \mathbf{x} \\ \lambda \epm := 
\bpm \mathbf{W}  & \mathbf{B} \\ \mathbf{B}^T & 0 \epm \bpm\mathbf{x} \\ \lambda \epm 
= \bpm \mathbf{b} \\0\epm
\ee
is equivalent to solving the symmetric positive definite system 
\been\label{eq:stabGalerkinSystem}
\widehat{\boldsymbol{\mathcal W}}\bpm \mathbf{x} \\ \lambda \epm:=
\bpm \mathbf{W} +\alpha \mathbf{B}\mathbf{B}^T & \mathbf{B} \\ \mathbf{B}^T & 0 \epm \bpm\mathbf{x} \\ \lambda \epm 
= \bpm \mathbf{b} \\0\epm.
\een
For more details about this stabilization, we refer to \cite[Ch. 6.6/12.2]{Steinbach}.

In order to see that the question of approximating $(\mathbf W^{\text{st}})^{-1}$ 
in the $\H$-matrix format is closely related to approximating ${\boldsymbol{\mathcal W}}^{-1}$
in the $\H$-matrix format, we partition 
$$
\Win = \bpm \mathbf{G} & \mathbf{P} \\ \mathbf{P}^T & z \epm
$$
and observe that the inverse $\left(\mathbf W^{\text{st}}\right)^{-1}$ can be computed explicitly: 
\bee
\left(\mathbf W^{\text{st}}\right)^{-1} = 
\mathbf{G} + \left(\mathbf W^{\text{st}}\right)^{-1}\mathbf{B}\mathbf{P}^T.
\ee
%\bee
%\left(\mathbf W^{\text{st}}\right)^{-1} = 
%\mathbf{G} + \left(\mathbf W^{\text{st}}\right)^{-1}
%\frac{\mathbf{B}\mathbf{B}^T}{\mathbf{B}^T\mathbf{B}}.
%\ee
Hence, the inverse $\left(\mathbf W^{\text{st}}\right)^{-1}$ can be 
computed just from a rank one update from $\mathbf{G}$, i.e., a subblock of $\Win$. We immediately
get the following corollary to Theorem~\ref{th:Happrox}:  

\begin{corollary}
\label{cor:stabGalerkin}
%Let ${\mathbf W}_\H$ be a blockwise rank-$r$ approximation to ${\boldsymbol{\mathcal W}}^{-1}|_{N \times N}$. 
There exists a blockwise rank-$(r+1)$ approximation 
$\mathbf W^{\text{st}}_\H$ to $({\mathbf W}^{\text{st}})^{-1}$ with 
$$
\|(\mathbf W^{\text{st}})^{-1} - \mathbf W^{\text{st}}_\H\|_2 \leq 
%\|{\boldsymbol{\mathcal W}}^{-1}|_{N \times N} - \mathbf W|_\H\|_2. 
C_{\rm apx} C_{\rm sp} \operatorname*{depth}(\mathbb{T}_{\mathcal I}) 
N^{(2d-1)/(2d-2)} e^{-br^{1/(d+1)}}.
$$
\end{corollary}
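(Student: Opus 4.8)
The plan is to leverage the explicit formula
$\left(\mathbf W^{\text{st}}\right)^{-1} = \mathbf{G} + \left(\mathbf W^{\text{st}}\right)^{-1}\mathbf{B}\mathbf{P}^T$
derived just above the statement, so that the task reduces to approximating the subblock $\mathbf{G}$ of $\Win$ and then adding a rank-one term. First I would observe that $\mathbf{G} = \boldsymbol{\mathcal W}^{-1}|_{N\times N} = \mathbf V$ in the notation of Remark~\ref{rem:saddle-point}, so Theorem~\ref{th:Happrox} directly supplies a blockwise rank-$r$ matrix $\mathbf V_\H$ with
$$
\|\mathbf{G} - \mathbf V_\H\|_2 \leq C_{\rm apx} C_{\rm sp} \operatorname*{depth}(\mathbb{T}_{\mathcal I}) N^{(2d-1)/(2d-2)} e^{-br^{1/(d+1)}}.
$$

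Next I would set $\mathbf W^{\text{st}}_\H := \mathbf V_\H + \left(\mathbf W^{\text{st}}\right)^{-1}\mathbf{B}\mathbf{P}^T$ and check that this is a blockwise rank-$(r+1)$ matrix: the added term $\left(\mathbf W^{\text{st}}\right)^{-1}\mathbf{B}\mathbf{P}^T$ is a global rank-one matrix (it is an outer product of the vector $\left(\mathbf W^{\text{st}}\right)^{-1}\mathbf{B}\in\R^{N\times 1}$ with the vector $\mathbf{P}\in\R^{N\times 1}$), so on every admissible block $(\tau,\sigma)\in P_{\rm far}$ the restriction of $\mathbf W^{\text{st}}_\H$ is the sum of a rank-$r$ block and a rank-$1$ block, hence has rank at most $r+1$. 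The error estimate is then immediate, since
$$
\|(\mathbf W^{\text{st}})^{-1} - \mathbf W^{\text{st}}_\H\|_2 = \|\mathbf{G} - \mathbf V_\H\|_2,
$$
the rank-one correction cancelling exactly. This already yields the claimed bound.

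The only genuine point requiring care — and what I would flag as the main (minor) obstacle — is the identification of $\mathbf{G}$ with the block $\mathbf V$ that Theorem~\ref{th:Happrox} actually approximates, i.e.\ confirming that the $N\times N$ leading subblock of $\boldsymbol{\mathcal W}^{-1}$ is the same object in both the saddle-point discussion (Remark~\ref{rem:saddle-point}) and the partition $\Win = \bigl(\begin{smallmatrix}\mathbf{G} & \mathbf{P}\\ \mathbf{P}^T & z\end{smallmatrix}\bigr)$ used in Section~\ref{sec:stabGalerkin}; these are literally the same matrix $\boldsymbol{\mathcal W}^{-1} = \Win$ partitioned the same way, so $\mathbf{G} = \mathbf V$. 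Everything else is bookkeeping: the constants $C_{\rm apx}$, $C_{\rm sp}$, $b$ and the exponent are inherited verbatim from Theorem~\ref{th:Happrox}, and the rank increases by exactly one because of the single rank-one update. Hence the corollary follows.
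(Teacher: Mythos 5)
Your proposal is correct and follows exactly the route the paper intends: approximate the subblock $\mathbf{G}$ by the blockwise rank-$r$ matrix from Theorem~\ref{th:Happrox} and add the exact rank-one correction $(\mathbf W^{\text{st}})^{-1}\mathbf{B}\mathbf{P}^T$, so the error is unchanged and each admissible block gains at most one unit of rank. The identification $\mathbf{G}=\mathbf{V}=\boldsymbol{\mathcal W}^{-1}|_{N\times N}$ that you flag is indeed the only point to check, and it holds since both are the leading $N\times N$ subblock of the same matrix.
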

%
%we immediately obtain
%an $\H$-matrix approximation (with a rank increased by 1) to $\left(\mathbf W^{\text{st}}\right)^{-1}$
%from the $\H$-matrix approximation to $\Win$ provided by Theorem~\ref{th:Happrox}. 

\section{$\H$-Cholesky decomposition}
\label{sec:LU-decomposition}
% ------------------------------------------------------------
%Numerical linear algebra provides very powerful tools in the form of 
%$LU$- and Cholesky decompositions, and analogous factorizations have been 
%proposed in the literature for matrices with hierarchical structures;  we mention
%\cite{xia13,xia-chandrasekaran-gu-li09,li-gu-wu-xia12} for the class of 
%``hierarchically semiseparable matrices'' and 
%``recursive skeletonization'',
%\cite{ho-greengard12,greengard-gueyffier-martinsson-rokhlin09,ho-ying13}. 
%In the context of $\H$-matrices, the idea to employ such factorizations 
%can be found in \cite{Lintner,Bebendorf05}. Although $\H$-matrix factorizations are 
%only approximate, numerical evidence indicates their great usefulness for example, for 
%black box preconditioning in iterative solvers, 
%\cite{Bebendorf05,Grasedyck05,Grasedyck08,leborne-grasedyck06,grasedyck-kriemann-leborne08}. 
%In the present section, we will show for the matrix ${\mathbf V}$ the existence 
%of a block lower triangular matrix ${\mathbf C_\H}$ such that ${\mathbf V} - {\mathbf C_\H} {\mathbf C_\H}^T$
%is exponentially small in the block rank. 
In this section we are concerned with proving the existence of a hierarchical Cholesky-decomposition
of the form $\mathbf{W}^{\rm st}\approx\mathbf{C}_{\H}\mathbf{C}_{\H}^T$, where $\mathbf{C}_{\H}$ 
is a lower triangular $\H$-matrix. 
The main results are summarized in Theorem~\ref{th:HLU}. It is shown by 
approximating off-diagonal block of certain Schur complements by low-rank matrices.
Therefore, the main contribution is done in Section~\ref{sec:schur-complements}, 
the remaining steps follow the lines of \cite{Bebendorf07,GrasedyckKriemannLeBorne,FMPFEM}.  \\

The advantage of studying the second system \eqref{eq:stabGalerkinSystem} is that the submatrix 
$\mathbf{W}^{\rm st}=\mathbf{W} +\alpha \mathbf{B}\mathbf{B}^T$ is symmetric and positive definite and therefore has
a Cholesky-decomposition, which can be used to derive a $LU$-decomposition for the whole matrix.
Moreover, the existence of the Cholesky decomposition does not depend on the numbering of the degrees of freedom, 
i.e., for every other numbering of the basis functions there is a Cholesky decomposition as well 
(see, e.g., \cite[Cor.~{3.5.6}]{horn-johnson13}). 
The existence of the Cholesky decomposition implies
the invertibility of the matrix $\mathbf{W}^{\rm st}|_{\rho \times \rho}$ for any $n \leq N$ and index set 
$\rho := \{1,\ldots,n\}$ (see, e.g., \cite[Cor.~{3.5.6}]{horn-johnson13}). 
For the $\H$-Cholesky decomposition of Theorem~\ref{th:HLU} below we assume that 
the unknowns are organized in a binary cluster tree ${\mathbb T}_{\mathcal I}$. This induces 
an ordering of the unknowns by requiring that the unknowns of one of the sons be numbered first and
those of the other son later; the precise numbering for the leaves is immaterial for our purposes. 
This induced ordering of the unknowns allows us to speak of {\em block lower triangular} matrices, if the  
block partition $P$ is based on the cluster tree ${\mathbb T}_{\mathcal I}$. 

The following theorem states that the Cholesky factor $\mathbf{C}$ for the stabilized matrix
can be approximated by a block lower triangular $\H$-matrix and, as a consequence, there exists a
hierarchical $LU$-factorization of $\widehat{\boldsymbol{\mathcal W}}$.
\begin{theorem}\label{th:HLU}
Let $\mathbf{W}^{\rm st} = \mathbf{C}\mathbf{C}^T$ be the Cholesky decomposition.
Let a partition $P$ of $\mathcal{I}\times \mathcal{I}$ be based on a cluster tree
$\mathbb{T}_{\mathcal{I}}$.
Then for every $r\geq 3$, there exist block lower triangular, blockwise rank-$r$ matrices $\mathbf{C_{\H}},\mathbf{L_{\H}}$ 
and a block upper triangular, blockwise rank-$r$ matrix $\mathbf{U_{\H}}$
such that
\begin{enumerate}[(i)]
\item 
\label{item:th:HLU-i}
$\displaystyle \frac{\norm{\mathbf{C}-\mathbf{C_{\H}}}_2}{\norm{\mathbf{C}}_2} \leq 
C_{\rm chol}   N^{\frac{2}{d-1}}  {\rm depth}(\mathbb{T}_{\mathcal{I}})
e^{-br^{1/(d+1)}}$
\item
\label{item:th:HLU-ii}
$\displaystyle\frac{\norm{\mathbf{W}^{\rm st}-\mathbf{C_{\H}}\mathbf{C_{\H}}}_2}
{\norm{\mathbf{W}^{\rm st}}_2}  \leq 
2C_{\rm chol}  N^{\frac{2}{d-1}}  {\rm depth}(\mathbb{T}_{\mathcal{I}}) e^{-br^{1/(d+1)}} 
\!+\! C_{\rm chol}^2  N^{\frac{4}{d-1}} {\rm depth}(\mathbb{T}_{\mathcal{I}})^2 e^{-2br^{1/(d+1)}}$,
\item
\label{item:th:HLU-iii}
$\displaystyle\frac{\norm{\widehat{\boldsymbol{\mathcal W}}-\mathbf{L_{\H}}\mathbf{U_{\H}}}_2}
{\norm{\widehat{\boldsymbol{\mathcal W}}}_2}  \leq 
2C_{\rm chol}  N^{\frac{2}{d-1}}  {\rm depth}(\mathbb{T}_{\mathcal{I}}) e^{-br^{1/(d+1)}} 
\!+\! C_{\rm chol}^2  N^{\frac{4}{d-1}}  {\rm depth}(\mathbb{T}_{\mathcal{I}})^2 e^{-2br^{1/(d+1)}}$,
\end{enumerate}
where $C_{\rm chol} = C_{\rm sp}C_{\rm sc}\sqrt{\kappa_2(\mathbf{W}^{\rm st})}$, 
with the sparsity constant $C_{\rm sp}$ of (\ref{eq:sparsityConstant}), 
the spectral condition number $\kappa_2(\mathbf{W}^{\rm st}) := \norm{\mathbf{W}^{\rm st}}_2 
\norm{{\mathbf{W}^{\rm st}}^{-1}}_2$, 
and a constant $C_{\rm sc}$
depending only on $\Omega$, $d$, $p$, the $\gamma$-shape regularity of the quasiuniform triangulation $\T_h$, 
the admissibility parameter $\eta$  and the stabilization parameter $\alpha$. 
\end{theorem}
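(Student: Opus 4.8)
The plan is to follow the now-standard strategy for $\mathcal{H}$-matrix Cholesky factorizations (as in \cite{Bebendorf07,GrasedyckKriemannLeBorne,FMPFEM}), the new ingredient being supplied by the Schur-complement analysis of Section~\ref{sec:schur-complements}. First, I would recall that for a cluster $\rho = \{1,\dots,n\}$ (a left-to-right initial segment induced by the cluster tree), the block $\mathbf{W}^{\rm st}|_{\rho\times\rho}$ is symmetric positive definite, so the Schur complement $\mathbf{S}_\rho := \mathbf{W}^{\rm st}|_{\rho^c\times\rho^c} - \mathbf{W}^{\rm st}|_{\rho^c\times\rho}(\mathbf{W}^{\rm st}|_{\rho\times\rho})^{-1}\mathbf{W}^{\rm st}|_{\rho\times\sigma}$ is well-defined. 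The key lemma from Section~\ref{sec:schur-complements} (which I am allowed to assume) is that for an $\eta$-admissible cluster pair $(\tau,\sigma)$ with $\tau,\sigma\subset\rho^c$, the off-diagonal block $\mathbf{S}_\rho|_{\tau\times\sigma}$ admits a rank-$r$ approximation with error $\lesssim C_{\rm sc}\sqrt{\kappa_2(\mathbf{W}^{\rm st})}\,N^{2/(d-1)}\,e^{-br^{1/(d+1)}}$; this is the analogue of Theorem~\ref{th:blockapprox} for Schur complements and rests on the same potential-approximation result Theorem~\ref{thm:function-approximationHypSing}.

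The second step is the recursion. The Cholesky factor $\mathbf{C}$ is built block-recursively: writing the cluster tree's root split as $\mathcal{I} = \rho_1\cup\rho_2$, one has
\begin{equation*}
\mathbf{W}^{\rm st} = \bpm \mathbf{W}^{\rm st}|_{\rho_1\times\rho_1} & \ast \\ \mathbf{W}^{\rm st}|_{\rho_2\times\rho_1} & \mathbf{W}^{\rm st}|_{\rho_2\times\rho_2}\epm, \qquad
\mathbf{C} = \bpm \mathbf{C}_{11} & 0 \\ \mathbf{C}_{21} & \mathbf{C}_{22}\epm,
\end{equation*}
where $\mathbf{C}_{11}$ is the Cholesky factor of $\mathbf{W}^{\rm st}|_{\rho_1\times\rho_1}$, $\mathbf{C}_{21} = \mathbf{W}^{\rm st}|_{\rho_2\times\rho_1}\mathbf{C}_{11}^{-T}$, and $\mathbf{C}_{22}$ is the Cholesky factor of the Schur complement $\mathbf{S}_{\rho_1}$. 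Recursing into the diagonal blocks and using the Schur-complement low-rank bound for the off-diagonal blocks $\mathbf{C}_{21}$ at every level, one obtains a block lower triangular blockwise rank-$r$ matrix $\mathbf{C}_{\H}$. The per-block error (the difference between the exact block of $\mathbf{C}$ and its best rank-$r$ approximant) is controlled by the Schur-complement lemma; one then assembles the blockwise bounds into a global spectral-norm bound exactly as in the proof of Theorem~\ref{th:Happrox}, picking up the factor $C_{\rm sp}$ from Lemma~\ref{lem:spectralnorm} and a factor $\operatorname{depth}(\mathbb{T}_{\mathcal{I}})$ from summing over levels. Choosing $r = C_{\rm dim}(2+\eta)^d q^{-d}k^{d+1}$ and the same $b$ as before converts $q^k$ into $e^{-br^{1/(d+1)}}$, yielding (\ref{item:th:HLU-i}) with $C_{\rm chol} = C_{\rm sp}C_{\rm sc}\sqrt{\kappa_2(\mathbf{W}^{\rm st})}$.

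The third step is routine bookkeeping: (\ref{item:th:HLU-ii}) follows from (\ref{item:th:HLU-i}) by writing $\mathbf{W}^{\rm st} - \mathbf{C}_{\H}\mathbf{C}_{\H}^T = (\mathbf{C}-\mathbf{C}_{\H})\mathbf{C}^T + \mathbf{C}_{\H}(\mathbf{C}-\mathbf{C}_{\H})^T$, using $\norm{\mathbf{C}}_2 = \norm{\mathbf{C}^T}_2 = \sqrt{\norm{\mathbf{W}^{\rm st}}_2}$, $\norm{\mathbf{C}_{\H}}_2 \leq \norm{\mathbf{C}}_2 + \norm{\mathbf{C}-\mathbf{C}_{\H}}_2$, and the triangle inequality; the quadratic term in the bound is exactly the $\norm{\mathbf{C}-\mathbf{C}_{\H}}_2^2$ contribution. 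For (\ref{item:th:HLU-iii}), one extends the factorization to the full saddle-point matrix $\widehat{\boldsymbol{\mathcal W}}$: since $\mathbf{W}^{\rm st}$ is the leading block and $\mathbf{B}\in\R^{N\times 1}$, the block $LU$-factorization $\widehat{\boldsymbol{\mathcal W}} = \mathbf{L}\mathbf{U}$ with $\mathbf{L} = \bpm \mathbf{C} & 0 \\ \mathbf{B}^T\mathbf{C}^{-T} & 1\epm$, $\mathbf{U} = \bpm \mathbf{C}^T & \mathbf{C}^{-1}\mathbf{B} \\ 0 & -\mathbf{B}^T(\mathbf{W}^{\rm st})^{-1}\mathbf{B}\epm$ only adds one row/column, hence preserves blockwise rank-$r$ structure up to the trivial extra block, and the error estimate transfers with the same constants.

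The main obstacle is the Schur-complement low-rank estimate itself — that is, establishing that $\mathbf{S}_\rho|_{\tau\times\sigma}$ inherits exponential-in-$r$ approximability from Theorem~\ref{thm:function-approximationHypSing}. The difficulty is that the Schur complement $\mathbf{S}_\rho = \mathbf{W}^{\rm st}|_{\rho^c\times\rho^c} - \mathbf{W}^{\rm st}|_{\rho^c\times\rho}(\mathbf{W}^{\rm st}|_{\rho\times\rho})^{-1}\mathbf{W}^{\rm st}|_{\rho\times\sigma}$ equals the matrix of a \emph{discrete} Dirichlet-to-Neumann-type operator associated with the subdomain problem on $\Gamma_\rho$ — one must reinterpret $(\mathbf{W}^{\rm st}|_{\rho\times\rho})^{-1}$ as (a discretization of) solving a hyper-singular equation on a screen, show the associated potential lies in the same class $\mathcal{H}_{h,0}$ of piecewise-harmonic functions with vanishing normal-derivative jump, and re-run the Caccioppoli/low-dimensional-approximation machinery of Section~\ref{sec:Approximation-solution} in this screen setting. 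The condition number $\kappa_2(\mathbf{W}^{\rm st})$ enters precisely because the stability of the discrete subproblem on $\rho$ is quantified through it, which is why it appears in $C_{\rm chol}$; keeping track of where $N^{2/(d-1)}$ (rather than $N^{(2d-1)/(2d-2)}$) arises — namely from the different scaling of the relevant norm equivalences on the Schur complement — is the delicate part of the accounting.
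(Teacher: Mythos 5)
Your proposal is correct and follows essentially the same route as the paper: approximate the admissible off-diagonal blocks $\mathbf{C}|_{\tau\times\sigma}=\mathbf{S}(\tau,\sigma)(\mathbf{C}(\sigma)^T)^{-1}$ via the Schur-complement low-rank result (Theorem~\ref{lem:Schur}), assemble with Lemma~\ref{lem:spectralnorm}, and obtain (\ref{item:th:HLU-ii}) and (\ref{item:th:HLU-iii}) by the triangle inequality and a bordered $LU$-factorization. The only bookkeeping slip is that $\sqrt{\kappa_2(\mathbf{W}^{\rm st})}$ does not sit in the Schur-complement lemma (which carries the factor $\norm{\mathbf{W}^{\rm st}}_2$ instead); it enters when multiplying by $(\mathbf{C}(\sigma)^T)^{-1}$, whose norm must be bounded uniformly over clusters by $\norm{\mathbf{C}^{-1}}_2$ — the paper's Lemma~\ref{lem:LUnorm}, a small step your sketch omits.
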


%----------------------------
\subsection{Schur complements}
\label{sec:schur-complements}
%----------------------------
For a cluster pair $(\tau,\sigma)$ and $\rho := \{i\in \mathcal{I} : i < \min(\tau\cup\sigma)\}$,
we define the Schur complement
\been\label{eq:defSchur}
\mathbf{S}(\tau,\sigma) = \mathbf{W}^{\rm st}|_{\tau\times\sigma} - \mathbf{W}^{\rm st}|_{\tau\times \rho}
 (\mathbf{W}^{\rm st}|_{\rho\times \rho})^{-1}\mathbf{W}^{\rm st}|_{\rho\times\sigma}.
\een
As mentioned in \cite{FMPBEM} such a Schur complement can be approximated by using $\H$-arithmetic,
but leads to worse estimates with respect to the rank needed for the approximation than the 
procedure here.
Therefore, we revisit our approach from \cite{FMPBEM} that is based on interpreting Schur complements 
as BEM matrices obtained from certain constrained spaces.

The main result in this section is Theorem~\ref{lem:Schur} below. 
For its proof, we need a degenerate approximation 
of the kernel function $\kappa(x,y) = G(x,y)$ of the single layer operator $V$ given by 
$V\phi(x) := \int_{\Gamma}G(x,y)\phi(y)ds_y$.
This classical result, stated here as a degenerate approximation by Chebyshev interpolation, 
is formulated in the following lemma. A proof can be found in \cite{FMPBEM}. 

\blem\label{lem:lowrankGreensfunction}
Let $\widetilde \eta>0$ and fix $\eta^\prime \in (0,2 \widetilde \eta)$. Then, for every hyper cube 
$B_Y \subset \R^d$, $d \in \{2,3\}$ and closed $D_X \subset \R^d$ with 
$\dist(B_Y,D_X)\geq \widetilde\eta \diam(B_Y)$ the following is true: For every 
$r \in \N$ there exist functions $g_{1,i}$, $g_{2,i}$, $i=1,\ldots,r$ such that 
\been
\norm{\kappa(x,\cdot)-\sum_{i=1}^r g_{1,i}(x) g_{2,i}(\cdot)}_{L^{\infty}(B_{Y})}
\leq C \frac{(1+1/\widetilde \eta)}{\dist(\{x\},B_Y)^{d-2}} (1 + \eta^\prime)^{-r^{1/d}}
\qquad \forall x \in D_X, 
\een
for a constant $C$ that depends solely on the choice of $\eta^\prime  \in (0,2\widetilde\eta)$. 
\elem

The following lemma gives a representation for the Schur complement by
interpreting it as a BEM matrix from a certain constrained space.
A main message of the following lemma is that by slightly modifying the Schur complement
$\mathbf{S}(\tau,\sigma)$, we can use an orthogonality without the stabilization term.

\begin{lemma}[Schur complement and orthogonality]
\label{rem:SchurRepresentation}
Let $(\tau,\sigma)$ be an admissible cluster pair,
$\rho := \{i\in \mathcal{I} : i < \min(\tau\cup\sigma)\}$, 
and the Schur complement $\mathbf{S}(\tau,\sigma)$ defined by \eqref{eq:defSchur}.
Let the function
 $\widetilde{\phi} \in S^{p,1}(\T_h)$
with $\widetilde{\phi} = \phi + \phi_{\rho}$, where
$\phi \in S^{p,1}(\T_h), \supp \phi \subset \overline{\Gamma_{\tau}}$ and
$\phi_{\rho} \in S^{p,1}(\T_h), \supp \phi_{\rho} \subset \overline{\Gamma_{\rho}}$, with 
$\Gamma_{\tau},\Gamma_{\rho}$ of the form \eqref{eq:screen},
 satisfy the orthogonality
\been\label{eq:SchurOrthoNoStab}
\skp{W\widetilde{\phi},\widehat{\psi}}_{L^2(\Gamma)} = 0 \quad 
\forall \widehat{\psi} \in S^{p,1}(\T_h) \; \text{with} \; \supp \widehat{\psi} \subset \overline{\Gamma_{\rho}}.
\een
Then, there exists a matrix $\mathbf{D}$ of rank $2$, which is independent of $\phi$ and $\psi$, such that  
\bee
\skp{W\widetilde{\phi},\psi}+\alpha\skp{\widetilde{\phi},1}\skp{\psi,1} = 
\boldsymbol{\phi}^T\left(\mathbf{S}(\tau,\sigma)+\mathbf{D}\right)\boldsymbol{\psi}.
\ee
%where $\mathbf{D}$ is a matrix of rank 2.
\end{lemma}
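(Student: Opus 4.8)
The plan is to write the bilinear form $\skp{W\widetilde{\phi},\psi}+\alpha\skp{\widetilde{\phi},1}\skp{\psi,1}$ in terms of the stabilized Galerkin matrix $\mathbf W^{\rm st}$ and then recognize the Schur complement structure explicitly. First I would introduce coefficient vectors: since $\widetilde\phi = \phi + \phi_\rho$ with $\supp\phi\subset\overline{\Gamma_\tau}$ and $\supp\phi_\rho\subset\overline{\Gamma_\rho}$, write $\widetilde\phi = \sum_{j\in\tau}\boldsymbol\phi_j\psi_j + \sum_{j\in\rho}(\boldsymbol\phi_\rho)_j\psi_j$ and $\psi = \sum_{j\in\sigma}\boldsymbol\psi_j\psi_j$ with $\supp\psi\subset\overline{\Gamma_\sigma}$ (the natural assumption, even if only implicit in the statement). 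Then $\skp{W\widetilde\phi,\psi} + \alpha\skp{\widetilde\phi,1}\skp{\psi,1} = \boldsymbol\phi^T\mathbf W^{\rm st}|_{\tau\times\sigma}\boldsymbol\psi + \boldsymbol\phi_\rho^T\mathbf W^{\rm st}|_{\rho\times\sigma}\boldsymbol\psi$ directly from the definition \eqref{eq:MatrixHypsing} of $\mathbf W^{\rm st}$.

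The key step is to pin down $\boldsymbol\phi_\rho$ using the orthogonality \eqref{eq:SchurOrthoNoStab}. Testing $\skp{W\widetilde\phi,\widehat\psi} = 0$ against all $\widehat\psi$ with $\supp\widehat\psi\subset\overline{\Gamma_\rho}$ gives $\mathbf W|_{\rho\times\tau}\boldsymbol\phi + \mathbf W|_{\rho\times\rho}\boldsymbol\phi_\rho = 0$. This is \emph{almost} the relation defining the Schur complement, except that the Schur complement \eqref{eq:defSchur} is built from $\mathbf W^{\rm st}$, not $\mathbf W$, and $\mathbf W^{\rm st}|_{\rho\times\rho} = \mathbf W|_{\rho\times\rho} + \alpha\mathbf B|_\rho\mathbf B|_\rho^T$ differs by a rank-one term. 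I would therefore solve the \eqref{eq:SchurOrthoNoStab}-relation as $\boldsymbol\phi_\rho = -(\mathbf W|_{\rho\times\rho})^{-1}\mathbf W|_{\rho\times\tau}\boldsymbol\phi$ and substitute, obtaining the bilinear form as $\boldsymbol\phi^T\big(\mathbf W^{\rm st}|_{\tau\times\sigma} - \mathbf W|_{\tau\times\rho}(\mathbf W|_{\rho\times\rho})^{-1}\mathbf W^{\rm st}|_{\rho\times\sigma}\big)\boldsymbol\psi$, where I used $\mathbf W^{\rm st}|_{\rho\times\tau}^T = \mathbf W|_{\tau\times\rho} + \alpha\mathbf B|_\tau\mathbf B|_\rho^T$ — wait, one must be careful which blocks carry the $\alpha\mathbf B\mathbf B^T$ correction. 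The clean bookkeeping is: $\mathbf W^{\rm st} = \mathbf W + \alpha\mathbf B\mathbf B^T$ globally, so every restriction $\mathbf W^{\rm st}|_{\cdot\times\cdot}$ equals $\mathbf W|_{\cdot\times\cdot} + \alpha\mathbf B|_\cdot\mathbf B|_\cdot^T$; collecting all the discrepancies between the $\mathbf W$-built expression that comes out of the computation and the $\mathbf W^{\rm st}$-built Schur complement $\mathbf S(\tau,\sigma)$ of \eqref{eq:defSchur} produces a finite sum of outer products of vectors of the form $\mathbf B|_\cdot$ and $(\mathbf W^{\rm st}|_{\rho\times\rho})^{-1}\mathbf B|_\rho$, hence a low-rank matrix $\mathbf D$.

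The main obstacle — really the only subtle point — is verifying that the correction matrix $\mathbf D$ has rank exactly (at most) $2$ and is independent of $\boldsymbol\phi$, $\boldsymbol\psi$. I expect the cleanest route is to compare $(\mathbf W|_{\rho\times\rho})^{-1}$ with $(\mathbf W^{\rm st}|_{\rho\times\rho})^{-1}$ via the Sherman–Morrison formula: since $\mathbf W^{\rm st}|_{\rho\times\rho} = \mathbf W|_{\rho\times\rho} + \alpha\mathbf B|_\rho\mathbf B|_\rho^T$, the two inverses differ by a rank-one matrix $c\,\mathbf w\mathbf w^T$ with $\mathbf w = (\mathbf W^{\rm st}|_{\rho\times\rho})^{-1}\mathbf B|_\rho$ (assuming, as one should note, that $\mathbf W|_{\rho\times\rho}$ is invertible on the relevant subspace — if not, one restricts to the subspace orthogonal to constants, or absorbs the issue by noting only $\mathbf W^{\rm st}|_{\rho\times\rho}$ needs to be invertible and rewriting the whole derivation using $\mathbf W^{\rm st}$-blocks throughout, then showing the orthogonality \eqref{eq:SchurOrthoNoStab} forces $\boldsymbol\phi_\rho$ to differ from $-(\mathbf W^{\rm st}|_{\rho\times\rho})^{-1}\mathbf W^{\rm st}|_{\rho\times\tau}\boldsymbol\phi$ by a correction living in the span of $\mathbf w$). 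Tracking through the substitution, the difference between $\skp{W\widetilde\phi,\psi}+\alpha\skp{\widetilde\phi,1}\skp{\psi,1}$ and $\boldsymbol\phi^T\mathbf S(\tau,\sigma)\boldsymbol\psi$ collapses to $\boldsymbol\phi^T\mathbf D\boldsymbol\psi$ with $\mathbf D$ a combination of the outer products $\mathbf B|_\tau(\cdots)^T$ and $(\cdots)\mathbf B|_\sigma^T$ — two rank-one terms — which is the claimed rank-$2$ matrix, manifestly independent of $\boldsymbol\phi,\boldsymbol\psi$. I would close by remarking that $\mathbf D$ depends only on $\tau,\sigma,\rho$ and the data $\mathbf W,\mathbf B,\alpha$, which is what the application in the next step needs.
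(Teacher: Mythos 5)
Your proposal is correct and follows essentially the same route as the paper: expand the bilinear form as $\boldsymbol{\phi}^T\mathbf{W}^{\rm st}|_{\tau\times\sigma}\boldsymbol{\psi}+\boldsymbol{\phi}_{\rho}^T\mathbf{W}^{\rm st}|_{\rho\times\sigma}\boldsymbol{\psi}$, solve the orthogonality for $\boldsymbol{\phi}_{\rho}=-\mathbf{W}|_{\rho\times\rho}^{-1}\mathbf{W}|_{\rho\times\tau}\boldsymbol{\phi}$, and reconcile the resulting expression with $\mathbf{S}(\tau,\sigma)$ via Sherman--Morrison--Woodbury, collecting the two rank-one discrepancies into $\mathbf{D}$. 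The only point you hedge on, invertibility of $\mathbf{W}|_{\rho\times\rho}$, the paper settles directly: since $\rho\subsetneq\mathcal{I}$, the hyper-singular operator is elliptic on the proper screen $\Gamma_{\rho}$, so $\mathbf{W}|_{\rho\times\rho}$ is symmetric positive definite and no restriction to a subspace is needed.
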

\begin{proof}
Given $\phi$, $\widetilde{\phi}$ is indeed uniquely defined:
By definition of $\widetilde{\phi}$, we get with the matrix $\mathbf{W}$ from \eqref{eq:matrixHypsing}
\bee
0 = \skp{W\widetilde{\phi},\widehat{\psi}}_{L^2(\Gamma)} = 
\skp{W(\phi + \phi_{\rho}),\widehat{\psi}}_{L^2(\Gamma)} = 
(\boldsymbol{\phi}^T\mathbf{W}|_{\tau \times \rho} + \boldsymbol{\phi}_{\rho}^T\mathbf{W}|_{\rho \times \rho})
\widehat{\boldsymbol \psi},
\ee
for $\widehat{\psi} \in S^{p,1}(\T_h)$, $\supp \widehat{\psi} \subset \overline{\Gamma_{\rho}}$ 
and corresponding vector $\widehat{\boldsymbol\psi} \in \R^{\abs{\rho}}$.
Due to $\rho \subsetneq I$, the matrix $\mathbf{W}|_{\rho \times \rho}$ is symmetric and positive definite 
and therefore invertible.
This leads to 
\bee
\boldsymbol{\phi}_{\rho}^T = - \boldsymbol{\phi}^T\mathbf{W}|_{\tau \times \rho}\mathbf{W}|_{\rho \times \rho}^{-1}. 
\ee
Thus, we get for $\psi$ with $\supp \psi \subset \overline{\Gamma_{\sigma}}$
and the vector $\mathbf{B}$ from \eqref{eq:matrixHypsing} that
\bean\label{eq:Schur1}
\skp{W\widetilde{\phi},\psi}+\alpha\skp{\widetilde{\phi},1}\skp{\psi,1} &=& 
\boldsymbol{\phi}^T \left(\mathbf{W}|_{\tau \times \sigma} +
 \alpha\mathbf{B}\mathbf{B}^T|_{\tau \times \sigma} \right)\boldsymbol{\psi}+ 
\boldsymbol{\phi}_{\rho}^T \left(\mathbf{W}|_{\rho \times \sigma}+ 
\alpha\mathbf{B}\mathbf{B}^T|_{\rho \times \sigma}\right) \boldsymbol{\psi} \nonumber\\
&=& \boldsymbol{\phi}^T\left(\mathbf{W}^{\rm st}|_{\tau\times\sigma} - 
\mathbf{W}|_{\tau \times \rho}\mathbf{W}|_{\rho \times \rho}^{-1}\mathbf{W}^{\rm st}|_{\rho\times\sigma} \right)
\boldsymbol{\psi}.
\eean
With the Sherman-Morrison-Woodbury formula (\cite[Ch.~{0.7.4}]{horn-johnson13}), 
the Schur complement $\mathbf{S}(\tau,\sigma)$ can be written as 
\bean\label{eq:Schur2}
\mathbf{S}(\tau,\sigma) &=& \mathbf{W}^{\rm st}|_{\tau\times\sigma} - \mathbf{W}^{\rm st}|_{\tau\times \rho}
 (\mathbf{W}^{\rm st}|_{\rho\times \rho})^{-1}\mathbf{W}^{\rm st}|_{\rho\times\sigma} \nonumber \\
&=& \mathbf{W}^{\rm st}|_{\tau\times\sigma} - \left(\mathbf{W}|_{\tau \times \rho} +
 \alpha\mathbf{B}\mathbf{B}^T|_{\tau \times \rho} \right)\left(\mathbf{W}|_{\rho \times \rho}^{-1} + \mathbf{P}\right)
\mathbf{W}^{\rm st}|_{\rho\times\sigma},
\eean
where $\mathbf{P}$ is a rank one matrix given by 
$\mathbf{P} = \mathbf{W}|_{\rho\times\rho}^{-1}\alpha\mathbf{B}|_{\rho}
\left(1+\alpha\mathbf{B}|_{\rho}^T\mathbf{W}|_{\rho\times\rho}^{-1}\mathbf{B}|_{\rho}\right)
\mathbf{B}|_{\rho}^T\mathbf{W}|_{\rho\times\rho}^{-1}$. 
Thus, comparing the matrices in \eqref{eq:Schur1} and \eqref{eq:Schur2}, we observe that
\bee
\skp{W\widetilde{\phi},\psi}+\alpha\skp{\widetilde{\phi},1}\skp{\psi,1} = 
\boldsymbol{\phi}^T\left(\mathbf{S}(\tau,\sigma)+\mathbf{D}\right)\boldsymbol{\psi},
\ee
with a rank-2 matrix $\mathbf{D}$.
\end{proof}

Now, we are able to prove the main result of this subsection, an approximation result for the 
Schur-complement $\mathbf{S}(\tau,\sigma)$.

\begin{theorem}\label{lem:Schur}
Let $(\tau,\sigma)$ be an $\eta$-admissible cluster pair, 
set $\rho := \{i\in \mathcal{I} : i < \min(\tau\cup\sigma)\}$,

and let the Schur complement $\mathbf{S}(\tau,\sigma)$ be defined in \eqref{eq:defSchur}.
Then for every $r \ge 3$, there exists a rank-$r$ matrix $\mathbf{S}_{r}(\tau,\sigma)$ such that 
\bee
\norm{\mathbf{S}(\tau,\sigma) - \mathbf{S}_{r}(\tau,\sigma)}_2 \leq C_{\rm sc}^\prime h^{d-3}  
e^{-br^{1/(d+1)}},
%\norm{\mathbf{W}^{\rm st}}_2,
\ee
where the constants $C_{\rm sc}^\prime$, $b >0$ depend only on $\Omega$,
$d,p$, the $\gamma$-shape regularity of the quasiuniform triangulation $\mathcal{T}_h$, and $\eta$.
Furthermore, there exists a constant $C_{\rm sc}$ depending additionally on the stabilization parameter $\alpha > 0$
such that 
\bee
\norm{\mathbf{S}(\tau,\sigma) - \mathbf{S}_{r}(\tau,\sigma)}_2 \leq C_{\rm sc}  N^{2/(d-1)}  
e^{-br^{1/(d+1)}} \norm{\mathbf{W}^{\rm st}}_2.
\ee
\end{theorem}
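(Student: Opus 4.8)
The plan is to follow the pattern of the proof of Theorem~\ref{th:blockapprox}, with the Galerkin solution replaced by the constrained density furnished by Lemma~\ref{rem:SchurRepresentation}. First I would use that lemma to write $\mathbf{S}(\tau,\sigma) = \mathbf{M} - \mathbf{D}$ with a rank-$2$ matrix $\mathbf{D}$ and a matrix $\mathbf{M}$ satisfying $\boldsymbol{\phi}^T\mathbf{M}\boldsymbol{\psi} = \skp{W\widetilde{\phi},\psi} + \alpha\skp{\widetilde{\phi},1}\skp{\psi,1}$ for all $\phi\in S^{p,1}(\T_h)$ with $\supp\phi\subset\overline{\Gamma_\tau}$ and $\psi\in S^{p,1}(\T_h)$ with $\supp\psi\subset\overline{\Gamma_\sigma}$, where $\widetilde{\phi} = \phi + \phi_\rho$ is the unique function obeying the \emph{stabilization-free} orthogonality \eqref{eq:SchurOrthoNoStab}. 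Since $\phi\mapsto\widetilde{\phi}$ is linear and $\skp{\widetilde\phi,1}$, $\skp{\psi,1}$ are single functionals, the stabilization term is an exactly representable rank-$1$ matrix; hence it suffices to approximate the bilinear form $(\phi,\psi)\mapsto\skp{W\widetilde{\phi},\psi}$ by a matrix of rank $\leq r-3$. As in Theorem~\ref{th:blockapprox}, if $r$ is too small relative to $\min(\abs\tau,\abs\sigma)$ (or if the box-radius smallness condition of Lemma~\ref{cor:lowdimappHS} fails), I would take the exact block, whose error is zero, together with the trivial bound $\norm{\mathbf{S}(\tau,\sigma)}_2\leq\norm{\mathbf{W}^{\rm st}}_2$, valid because $\mathbf{S}(\tau,\sigma)$ is an off-diagonal sub-block of the positive semidefinite Schur complement $\mathbf{W}^{\rm st}/\mathbf{W}^{\rm st}|_{\rho\times\rho}$. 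In the main regime, using $\mathbf{S}(\tau,\sigma)^T=\mathbf{S}(\sigma,\tau)$ (symmetry of $\mathbf{W}^{\rm st}$) I may assume $\diam B_{R_\sigma}\leq\diam B_{R_\tau}$; with $\kappa=1/(1+\eta)$ the enlarged box $D:=B_{(1+\kappa)R_\sigma}$ still satisfies $\dist(D,B_{R_\tau})>0$ by admissibility, so $D\cap\overline{\Gamma_\tau}=\emptyset$ while $\overline{\Gamma_\sigma}\subset D$.

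Next I would pass to the double-layer potential $u:=\widetilde{K}\widetilde{\phi}$, which is piecewise harmonic with $[\partial_n u]=0$ and $[\gamma_0 u]=\widetilde{\phi}$. Because $\supp\widetilde{\phi}\subset\overline{\Gamma_\tau\cup\Gamma_\rho}$ and $D\cap\overline{\Gamma_\tau}=\emptyset$, the jump $[\gamma_0 u]|_{D\cap\Gamma}$ is supported in $\overline{\Gamma_\rho}$, and \eqref{eq:SchurOrthoNoStab} reads exactly $\skp{\partial_n u|_{D\cap\Gamma},\psi_h}=0$ for all $\psi_h\in S^{p,1}(\T_h)$ with $\supp\psi_h\subset D\cap\overline{\Gamma_\rho}$; hence $u|_D\in\mathcal{H}_{h,0}(D,\Gamma_\rho,0)$, and $\skp{W\widetilde{\phi},\psi}=-\skp{\gamma_1^{\text{int}}u,\psi}$ depends only on $u|_D$ and on $\psi$ (here the point of Lemma~\ref{rem:SchurRepresentation} is precisely that no stabilization enters, i.e.\ $\mu=0$). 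Since $\phi\mapsto\widetilde{\phi}\mapsto u$ is linear, Lemma~\ref{cor:lowdimappHS} with $\mu=0$ provides a fixed subspace $\widehat W_k\subset\mathcal{H}_{h,0}(D,\Gamma_\rho,0)$ with $\dim\widehat W_k\lesssim(2+\eta)^d q^{-d}k^{d+1}$ and the associated $\triplenorm{\cdot}_{h,(1+\kappa/2)R_\sigma}$-orthogonal projection $\Pi$, so that $\widehat w:=\Pi u$ satisfies $\triplenorm{u-\widehat w}_{h,(1+\kappa/2)R_\sigma}\leq q^k\triplenorm{u}_{h,(1+\kappa)R_\sigma}$; replacing $u$ by $\widehat w$ in $-\skp{\gamma_1^{\text{int}}u,\psi}$ gives a bilinear form of rank $\leq\dim\widehat W_k$ (as $\phi\mapsto\widehat w$ is linear with range $\widehat W_k$), with error $\skp{\gamma_1^{\text{int}}(u-\widehat w),\psi}$. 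For the purely far-field part of the interaction $\Gamma_\tau\leftrightarrow\Gamma_\sigma$ one may alternatively use directly the degenerate kernel expansion of $G$ from Lemma~\ref{lem:lowrankGreensfunction}, differentiated once in $x$ and once in $y$ to yield a degenerate approximation of the hyper-singular kernel.

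To estimate the error term I would invoke Lemma~\ref{lem:estimateun} applied to $u-\widehat w\in\mathcal{H}_{h,0}(D,\Gamma_\rho,0)\subset\mathcal{H}_h$, on the box $B_{R_\sigma}\subset B_{(1+\kappa/2)R_\sigma}$ (which still contains $\overline{\Gamma_\sigma}$), obtaining $\norm{\gamma_1^{\text{int}}(u-\widehat w)}_{L^2(B_{R_\sigma}\cap\Gamma)}\lesssim h^{-1/2}R_\sigma^{-1}\,q^k\,\triplenorm{u}_{h,(1+\kappa)R_\sigma}$. Pairing against $\psi$ (supported in $\overline{\Gamma_\sigma}$) and using \eqref{eq:basisisomorphism} for $\norm{\psi}_{L^2(\Gamma)}\simeq h^{(d-1)/2}\norm{\boldsymbol\psi}_2$, together with the mapping property of $\widetilde{K}$, the inverse inequality \eqref{eq:inverse}, and \eqref{eq:basisisomorphism} again to bound $\triplenorm{u}_{h,(1+\kappa)R_\sigma}\lesssim(1+R_\sigma^{-1})h^{-1/2}h^{(d-1)/2}\norm{\boldsymbol{\widetilde\phi}}_2$, and finally the corrector bound $\norm{\boldsymbol\phi_\rho}_2\leq\norm{\mathbf{W}|_{\tau\times\rho}}_2\norm{\mathbf{W}|_{\rho\times\rho}^{-1}}_2\norm{\boldsymbol\phi}_2$ (recall $\mathbf{W}|_{\rho\times\rho}$ is a principal submatrix of a positive definite matrix) to control $\norm{\boldsymbol{\widetilde\phi}}_2$ by $\norm{\boldsymbol\phi}_2$, collecting all powers of $h$ (with $R_\sigma\gtrsim h$) yields $\norm{\mathbf{S}(\tau,\sigma)-\mathbf{S}_r(\tau,\sigma)}_2\lesssim h^{d-3}q^k$. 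Choosing $k$ with $\dim\widehat W_k+3\approx r$ and $b=-\ln q\cdot C_{\rm dim}^{-1/(d+1)}q^{d/(d+1)}(2+\eta)^{-d/(d+1)}$ as in the proof of Theorem~\ref{th:Happrox} turns $q^k$ into $e^{-br^{1/(d+1)}}$, giving the first estimate; the second follows from $h\simeq N^{-1/(d-1)}$ and $\norm{\mathbf{W}^{\rm st}}_2\geq\norm{\mathbf{W}}_2\simeq h^{d-2}$, so that $h^{d-3}\lesssim h^{-1}\norm{\mathbf{W}^{\rm st}}_2\lesssim N^{2/(d-1)}\norm{\mathbf{W}^{\rm st}}_2$.

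The step I expect to be the main obstacle is making the a-priori control of the constrained density $\widetilde{\phi}$ — equivalently, the stability of the Schur corrector $\phi_\rho$ — quantitative, since $\mathbf{W}$ is only positive semidefinite and $\mathbf{W}|_{\rho\times\rho}^{-1}$ may be badly conditioned. This is precisely where the polynomial-in-$N$ factor $N^{2/(d-1)}$ and the $\norm{\mathbf{W}^{\rm st}}_2$- and $\alpha$-dependence of the second bound enter, and I would handle it by exploiting positive definiteness of $\mathbf{W}^{\rm st}$ (passing, if needed, through the $\mathbf{W}^{\rm st}$-based Schur corrector with $\norm{\mathbf{W}^{\rm st}|_{\rho\times\rho}^{-1}}_2\leq\norm{(\mathbf{W}^{\rm st})^{-1}}_2$) and by treating the small-rank and large-$h/R_\sigma$ cases separately via exact blocks, exactly as in the proof of Theorem~\ref{th:blockapprox}.
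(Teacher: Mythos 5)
Your core approximation argument takes a genuinely different route from the paper's. After invoking Lemma~\ref{rem:SchurRepresentation} (as the paper does), you approximate the potential $u=\widetilde K\widetilde\phi$ in a volume box around $\sigma$ via Lemma~\ref{cor:lowdimappHS} with $\mu=0$, and recover the bilinear form through $\skp{W\widetilde\phi,\psi}=-\skp{\gamma_1^{\rm int}u,\psi}$, controlling the normal derivative of the error with Lemma~\ref{lem:estimateun}. The paper instead symmetrizes ($\skp{W\widetilde\phi,\psi}=\skp{\widetilde\phi,W\psi}$), splits into a near part on $B_{(1+\delta)R_\sigma}\cap\Gamma_\rho$ (where the density $\widetilde\phi$ itself is approximated) and a far part on $\Gamma\setminus B_{(1+\delta)R_\sigma}$, and treats the far part with the $\curl_\Gamma$/single-layer representation of $W$ together with the degenerate kernel expansion of Lemma~\ref{lem:lowrankGreensfunction}. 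Your route avoids the near/far splitting and the degenerate kernel entirely, which is attractive, and the powers of $h$ do come out to $h^{d-3}$ in the end (your intermediate claim $\norm{\gamma_1^{\rm int}(u-\widehat w)}_{L^2(B_{R_\sigma}\cap\Gamma)}\lesssim h^{-1/2}R_\sigma^{-1}q^k\triplenorm{u}_{h,(1+\kappa)R_\sigma}$ carries the wrong prefactor --- Lemma~\ref{lem:estimateun} produces $h^{-1/2}(R_\sigma/h)$ in front of the triple norm --- but the final bookkeeping is unaffected). One caution: your parenthetical alternative of differentiating the expansion of Lemma~\ref{lem:lowrankGreensfunction} in $x$ and $y$ to obtain a degenerate hyper-singular kernel is exactly what the paper warns against; on a polyhedral $\Gamma$ that kernel is not asymptotically smooth, which is why the $\curl_\Gamma$ detour is used.

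The genuine gap is the one you flag yourself: a quantitative stability bound for the corrector $\phi_\rho$ that costs no powers of $h$. Neither of your proposed remedies delivers it. The matrix route $\norm{\boldsymbol\phi_\rho}_2\leq\norm{\mathbf W|_{\tau\times\rho}}_2\,\norm{\mathbf W|_{\rho\times\rho}^{-1}}_2\,\norm{\boldsymbol\phi}_2$ loses at least a factor $h^{-1}$ (one has $\norm{\mathbf W|_{\tau\times\rho}}_2\lesssim h^{d-2}$ while $\lambda_{\min}(\mathbf W|_{\rho\times\rho})\gtrsim h^{d-1}$ is the best available), which would degrade $h^{d-3}$ to $h^{d-4}$; and switching to the $\mathbf W^{\rm st}$-based corrector is not an option, because the entire point of Lemma~\ref{rem:SchurRepresentation} is that $\widetilde\phi$ obeys the stabilization-free orthogonality \eqref{eq:SchurOrthoNoStab} --- replacing it by the $\mathbf W^{\rm st}$-orthogonality would destroy the membership $u\in\H_{h,0}(D,\Gamma_\rho,0)$ on which your construction rests. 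The paper's resolution is an energy argument you are missing: since $\supp\phi_\rho\subset\overline{\Gamma_\rho}\subsetneq\Gamma$, the hyper-singular operator is elliptic on this constrained (screen) subspace, and \eqref{eq:SchurOrthoNoStab} yields
\begin{equation*}
\norm{\phi_\rho}_{H^{1/2}(\Gamma)}^2\lesssim\skp{W\phi_\rho,\phi_\rho}=-\skp{W\phi,\phi_\rho}\lesssim\norm{\phi}_{H^{1/2}(\Gamma)}\norm{\phi_\rho}_{H^{1/2}(\Gamma)},
\end{equation*}
hence $\norm{\widetilde\phi}_{H^{1/2}(\Gamma)}\lesssim\norm{\phi}_{H^{1/2}(\Gamma)}\lesssim h^{-1/2}\norm{\phi}_{L^2(\Gamma)}$ with no further loss (this is \eqref{eq:Schurtemp2} in the paper). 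With that estimate inserted, your argument closes. A minor final point: the lower bound $\norm{\mathbf W^{\rm st}}_2\geq\norm{\mathbf W}_2\simeq h^{d-2}$ you invoke is more than needed and its $\gtrsim$ half is unproven; the coercivity bound $\norm{\mathbf W^{\rm st}}_2\gtrsim h^{d-1}$ (the source of the $\alpha$-dependence of $C_{\rm sc}$) already gives $h^{d-3}\lesssim N^{2/(d-1)}\norm{\mathbf W^{\rm st}}_2$.
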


\bpro
Let $B_{R_{\tau}},B_{R_{\sigma}}$ be
bounding boxes for the clusters $\tau$, $\sigma$ satisfying \eqref{eq:admissibility} and
$\Gamma_{\rho} \subset \Gamma$ defined by \eqref{eq:screen}. 
%First, we observe that the Schur complement matrix
%$\mathbf{S}(\tau,\sigma)$ can be understood in terms of an orthogonalization
%with respect to the degrees of freedom in $\rho$. More precisely, 
%a direct calculation shows for $\boldsymbol{\phi}\in\R^{\abs{\tau}}$, $\boldsymbol{\psi}\in\R^{\abs{\sigma}}$
%the  representation
Lemma~\ref{rem:SchurRepresentation} provides a representation for the Schur complement as
\been\label{eq:SchurRepresentation}
\boldsymbol{\phi}^T\left(\mathbf{S}(\tau,\sigma)+\mathbf{D}\right)\boldsymbol{\psi} = 
\skp{W\widetilde{\phi},\psi}_{L^2(\Gamma)}+\alpha\skp{\widetilde{\phi},1}_{L^2(\Gamma)}\skp{\psi,1}_{L^2(\Gamma)}, 
\een
with the following relation between the functions $\psi$, $\widetilde\phi$ and the vectors 
$\boldsymbol{\psi}$, $\boldsymbol{\phi}$, respectively: 
$\psi = \sum_{j=1}^{\abs{\sigma}}\boldsymbol{\psi}_j\chi_{j_{\sigma}}$, 
where the index $j_{\sigma}$ denotes the $j$-th basis function corresponding to the cluster $\sigma$,
and the function $\widetilde{\phi} \in S^{p,1}(\T_h)$ 
is defined by $\widetilde{\phi} = \phi + \phi_{\rho}$ 
with $\phi = \sum_{j=1}^{\abs{\tau}}\boldsymbol{\phi}_j\chi_{j_{\tau}}$
and $\supp \phi_{\rho} \subset \overline{\Gamma_{\rho}}$ such that 
\been\label{eq:SchurOrthogonality}
\skp{W\widetilde{\phi},\widehat{\psi}}_{L^2(\Gamma)}
%+\alpha\skp{\widetilde{\phi},1}_{L^2(\Gamma)}\skp{\widehat{\psi},1}_{L^2(\Gamma)} 
 = 0 \quad \forall \widehat{\psi} \in S^{p,1}({\mathcal T}_h) \; \text{with}\; 
\supp \widehat{\psi} \subset \overline{\Gamma_{\rho}}.
\een
Our low-rank approximation of the Schur complement matrix $\mathbf{S}(\tau,\sigma)$ will have two
ingredients: first, 
based on the the techniques of Section~\ref{sec:Approximation-solution} we exploit the 
orthogonality \eqref{eq:SchurOrthogonality} to 
construct a low-dimensional space $\widehat W_k$ from which for any $\phi$, the corresponding
function $\widetilde \phi$ can be approximated well. Second,  
we exploit that the function $\psi$ in (\ref{eq:SchurRepresentation}) is 
supported by $\Gamma_{\sigma}$, and we will use Lemma~\ref{lem:lowrankGreensfunction}. 

Let $\delta = \frac{1}{1+\eta}$ and $B_{R_{\sigma}}$, $B_{(1+\delta)R_{\sigma}}$ be concentric boxes.
The symmetry of $W$ leads to
\begin{align}
\label{eq:tmpSchur}
&\skp{W\widetilde{\phi},\psi}_{L^2(\Gamma)}+\alpha\skp{\widetilde{\phi},1}_{L^2(\Gamma)}\skp{\psi,1}_{L^2(\Gamma)} 
= \skp{\widetilde{\phi},W\psi}_{L^2(\Gamma)}+\alpha\skp{\widetilde{\phi},1}_{L^2(\Gamma)}\skp{\psi,1}_{L^2(\Gamma)}
\nonumber \\ 
\qquad &=
\skp{\widetilde{\phi},W\psi}_{L^2( B_{(1+\delta)R_{\sigma}}\cap\Gamma_{\rho} )} + 
\skp{\widetilde{\phi},W\psi}_{L^2(\Gamma \setminus B_{(1+\delta)R_{\sigma}})}  
%\nonumber\\
%& & 
+\alpha\skp{\widetilde{\phi},1}_{L^2(\Gamma)}\skp{\psi,1}_{L^2(\Gamma)}. 
\end{align}
First, we treat the first term on the right-hand side of \eqref{eq:tmpSchur}.
In view of the symmetry property 
$\mathbf{S}(\tau,\sigma) = \mathbf{S}(\sigma,\tau)^T$, we may assume for approximation
purposes that $\operatorname*{diam} B_{R_\sigma} \leq \operatorname*{diam} B_{R_\tau}$, i.e., 
$\min\{\diam(B_{R_{\tau}}),\diam(B_{R_{\sigma}})\} = \sqrt{d}R_{\sigma}$.Next, the choice of $\delta$  
and the admissibility condition \eqref{eq:admissibility} imply
\bee
\dist(B_{(1+2\delta)R_{\sigma}},B_{R_{\tau}}) \geq \dist(B_{R_{\sigma}},B_{R_{\tau}})-\sqrt{d}\delta R_{\sigma}
\geq \sqrt{d}R_{\sigma}(\eta^{-1}-\delta) > 0.
\ee
Therefore, we have $\widetilde{\phi}|_{B_{(1+2\delta)R_{\sigma}}\cap\Gamma_{\rho}} = \phi_{\rho}|_{B_{(1+2\delta)R_{\sigma}}\cap\Gamma_{\rho}}$
and the orthogonality \eqref{eq:SchurOrthogonality} holds 
on the box $B_{(1+2\delta)R_{\sigma}}$. 
Thus, by definition of $\H_{h,0}$, we have  
$\widetilde{K}\widetilde{\phi} \in \H_{h,0}(B_{(1+2\delta)R_{\sigma}},\Gamma_{\rho},0)$.

As a consequence, Lemma~\ref{cor:lowdimappHS} can be applied to the potential 
$\widetilde{K}\widetilde{\phi}$
with $R := (1+\delta)R_{\sigma}$ and $\kappa := \frac{1}{2+\eta} = \frac{\delta}{1+\delta}$. Note that
$(1+\kappa)(1+\delta) = 1+2\delta$ and $1+\kappa^{-1} = 3+\eta$.
Hence, we get a low dimensional space $\widehat{W}_k$ of dimension
$\dim \widehat{W}_k \leq C_{\rm dim}(3+\eta)^dq^{-d}k^{d+1} =: r$, and
the best approximation $\widehat{\phi} = \Pi_{\widehat{W}_k}\widetilde{\phi}$
to $\widetilde{\phi}$ from the space $\widehat{W}_k$ satisfies
\bee
\norm{\widetilde{\phi}-\widehat{\phi}}_{L^{2}(B_{(1+\delta)R_{\sigma}}\cap\Gamma_{\rho})} \lesssim 
R_{\sigma} h^{-1/2} q^k \triplenorm{\widetilde{K}\widetilde{\phi}}_{h,(1+2\delta)R_{\sigma}} 
\lesssim   h^{-1/2}  e^{-b_1r^{1/(d+1)}}\norm{\widetilde{\phi}}_{H^{1/2}(\Gamma)},
\ee
where we defined $b_1 := -\frac{\ln(q)}{C_{\rm dim}^{1/(d+1)}}q^{d/(d+1)}(3+\eta)^{-d/(1+d)} > 0$ 
to obtain $q^k = e^{-b_1r^{1/(d+1)}}$.
Therefore, we get
\bean\label{eq:Schurtemp1}
\abs{\skp{\widetilde{\phi}-\widehat{\phi},W\psi}_{L^2(B_{(1+\delta)R_{\sigma}}\cap\Gamma_{\rho})}} \lesssim
 h^{-1/2} e^{-b_1r^{1/(d+1)}}\norm{\widetilde{\phi}}_{H^{1/2}(\Gamma)}\norm{W\psi}_{L^{2}(\Gamma)}. 
\eean
The ellipticity of the hyper-singular integral operator on the screen $\Gamma_{\rho} \subsetneq \Gamma$, 
$\supp (\widetilde{\phi} - \phi) = \supp \phi_{\rho} \subset \overline{\Gamma_{\rho}}$,
 and the orthogonality \eqref{eq:SchurOrthogonality} lead to
\bean\label{eq:Schurtemp2}
\norm{\widetilde{\phi}-\phi}_{H^{1/2}(\Gamma)}^2&\lesssim& 
\skp{W(\widetilde{\phi}-\phi),\widetilde{\phi}-\phi}_{L^2(\Gamma)}
= -\skp{W\phi,\widetilde{\phi}-\phi}_{L^2(\Gamma)} \nonumber \\
 &\lesssim& 
\norm{W\phi}_{H^{-1/2}(\Gamma)}\norm{\widetilde{\phi}-\phi}_{H^{1/2}(\Gamma)}
\lesssim \norm{\phi}_{H^{1/2}(\Gamma)}\norm{\widetilde{\phi}-\phi}_{H^{1/2}(\Gamma)}.
\eean
Thus, with the triangle inequality, \eqref{eq:Schurtemp2}, 
the stability of $W:H^{1}(\Gamma)\ra L^{2}(\Gamma)$, and the inverse estimate \eqref{eq:inverse}, we 
can estimate \eqref{eq:Schurtemp1} by 
\bea
\abs{\skp{\widetilde{\phi}-\widehat{\phi},W\psi}_{L^2(B_{(1+\delta)R_{\sigma}}\cap\Gamma_{\rho})}} 
&\lesssim& 
 h^{-1/2} e^{-br^{1/(d+1)}}\left(\norm{\widetilde{\phi}-\phi}_{H^{1/2}(\Gamma)}+
\norm{\phi}_{H^{1/2}(\Gamma)}\right)\norm{W\psi}_{L^{2}(\Gamma)} \\
&\lesssim& 
 h^{-2} e^{-br^{1/(d+1)}}\norm{\phi}_{L^{2}(\Gamma)}\norm{\psi}_{L^2(\Gamma)}. 
%\lesssim  h^{d-3/2} e^{-br^{1/(d+1)}} \norm{\boldsymbol{\phi}}_2\norm{\boldsymbol{\psi}}_2.
\eea
For the second term in \eqref{eq:tmpSchur}, 
we exploit the asymptotic smoothness of the Green's function $G(\cdot,\cdot)$.
First, we mention a standard device in connection with the hyper-singular integral operator, namely, 
it can be represented in terms of the simple-layer operator (see, e.g., \cite[Sec.~6]{Steinbach}): 
\begin{equation}
\label{eq:representation-of-W-in-terms-of-V}
\skp{\widetilde{\phi},W\psi} 
= 
\skp{{\curl}_{\Gamma}\widetilde{\phi},V{\curl}_{\Gamma}\psi},  
\end{equation}
where for a scalar function $v$ defined on $\Gamma$, a lifting operator $\mathcal{L}$, and the outer normal
vector $n$, the surface curl is defined as 
\begin{align*}
{\curl}_\Gamma v &= n \times \gamma_0^{\rm int} (\nabla \mathcal{L} v), \qquad \mbox{ for $d = 3$}, \\
{\curl}_\Gamma v &= n \cdot \gamma_0^{\rm int} (\nabla^T \mathcal{L} v), \quad 
\nabla^T v = (\partial_2 v,-\partial_1 v)^T \qquad \mbox{ for $d = 2$}. 
\end{align*}
%with the surface curl given by $\curl_{\Gamma} \psi := \gamma_0^{\rm int}(\nabla \mathcal{L}\psi \times n)$, where
%$\mathcal{L}$ denotes a lifting operator and $n$ is the outer normal vector.
The representation (\ref{eq:representation-of-W-in-terms-of-V})
is necessary here, since the kernel of the hyper-singular integral operator is not 
asymptotically smooth on non-smooth surfaces $\Gamma$.

Now, Lemma~\ref{lem:lowrankGreensfunction}
can be applied with $B_Y = B_{R_{\sigma}}$ and $D_X = \Gamma \setminus B_{(1+\delta)R_{\sigma}}$, 
where the choice of $\delta$ implies 
\been
\label{eq:degenerate-approximation-admissibility}
\dist(B_Y,D_X)\geq \frac{1}{2\sqrt{d}(1+\eta)} \diam(B_Y).
\een
Therefore, we get an approximation $G_r(x,y) = \sum_{i=1}^r g_{1,i}(x) g_{2,i}(y)$ such that 
\bean
\label{eq:degenerate-approximation-error}
\norm{G(x,\cdot)-{G}_r(x,\cdot)}_{L^{\infty}(B_{R_{\sigma}})} 
\!&\lesssim&\! \frac{1}{\dist(\{x\},B_{R_\sigma})^{d-2}}e^{-b_2r^{1/d}} 
\quad\! \! \forall x \in \Gamma\setminus B_{(1+\delta)R_{\sigma}};
\eean
here, the constant $b_2>0$ depends only on $d$ and $\eta$. 
As a consequence of \eqref{eq:degenerate-approximation-admissibility} and
\eqref{eq:degenerate-approximation-error}, 
the rank-$r$ operator $W_r$ given by 
\bee
\skp{\widetilde{\phi},W_r\psi}_{L^2(\Gamma\setminus B_{(1+\delta)R_{\sigma}})}:=
\int_{\Gamma\setminus B_{(1+\delta)R_{\sigma}}} \curl_{\Gamma}\widetilde{\phi}(x)
\int_{B_{R_{\sigma}}\cap\Gamma}{G}_r(x,y)\curl_{\Gamma}\psi(y)ds_yds_x \ee 
satisfies with $B:= (\Gamma \setminus B_{(1+\delta)R_{\sigma}}) \times (B_{R_{\sigma}}\cap\Gamma)$
\bea
\abs{\skp{\widetilde{\phi},(W-W_r)\psi}_{L^2(\Gamma \setminus B_{(1+\delta)R_{\sigma}})}} 
%\abs{\int_{\Gamma \setminus B_{(1+\delta)R_{\sigma}}}\widetilde{\phi}(x)\int_{B_{R_{\sigma}}\cap\Gamma}(G(x,y)-{G}_r(x,y))\psi(y)ds_y ds_x} \\
&\lesssim& 
\norm{\curl_{\Gamma}\widetilde{\phi}}_{L^2(\Gamma)}
\sqrt{\operatorname*{meas}(\Gamma \cap B_{R_\sigma})}
\norm{G-\widetilde{G}_r}_{L^{\infty}\left(B\right)}
\norm{\curl_{\Gamma}\psi}_{L^2(\Gamma)} \\
&\lesssim& 
h^{-3/2}\delta^{2-d} R_\sigma^{(3-d)/2} e^{-b_2r^{1/d}}
\norm{\widetilde{\phi}}_{H^{1/2}(\Gamma)} \norm{\psi}_{L^2(\Gamma)} \\
&\lesssim& 
h^{-2}e^{-b_2r^{1/d}} \norm{\phi}_{L^2(\Gamma)}
\norm{\psi}_{L^2(\Gamma)},
\eea
where the last two inequalities follow from the inverse estimate Lemma~\ref{lem:inverseinequality}, 
the stability estimate \eqref{eq:Schurtemp2} for the mapping $\phi \mapsto \widetilde \phi$, the assumption 
$d \leq 3$ as well as $R_{\sigma} \leq \eta\diam(\Omega)$, and the choice $\delta = \frac{1}{1+\eta}$. 
Here, the hidden constant additionally depends on $\eta$.

Since the mapping 
\bee
(\phi,\psi)\! \mapsto\! \skp{\widehat{\phi},W\psi}_{L^2(B_{(1+\delta)R_{\sigma}}\cap\Gamma_{\rho})} + 
\skp{\widetilde{\phi},W_r\psi}_{L^2(\Gamma \setminus B_{(1+\delta)R_{\sigma}})}
%+\alpha\skp{\widetilde{\phi},1}\skp{\psi,1}
\ee
defines a bounded bilinear form on $L^2(\Gamma)$,
there exists a linear operator $\widehat{W}_r:L^2(\Gamma)\ra L^2(\Gamma)$ such that
\bee
\skp{\widehat{\phi},W\psi}_{L^2(B_{(1+\delta)R_{\sigma}}\cap\Gamma_{\rho})} + 
\skp{\widetilde{\phi},W_r\psi}_{L^2(\Gamma \setminus B_{(1+\delta)R_{\sigma}})}
%+\alpha\skp{\widetilde{\phi},1}\skp{\psi,1}
= \skp{\widehat{W}_r\phi,\psi}_{L^2(\Gamma)},
\ee
and the dimension of the range of $\widehat{W}_r$ is bounded by $2r$.

Therefore, we get 
\bea
\abs{\skp{W\widetilde{\phi},\psi}_{L^2(\Gamma)}
%+\alpha\skp{\widetilde{\phi},1}_{L^2(\Gamma)}\skp{\psi,1}_{L^2(\Gamma)} 
 - \skp{\widehat{W}_r\phi,\psi}_{L^2(\Gamma)}}     
\lesssim h^{-2}e^{-br^{1/(d+1)}}\norm{\phi}_{L^2(\Gamma)}
\norm{\psi}_{L^2(\Gamma)},
\eea
with $b := \min\{b_1,b_2\}$.
This leads to a
matrix $\widehat{\mathbf{S}_{r}}(\tau,\sigma)$ of rank $2r+1$ such that
\bee
\norm{\mathbf{S}(\tau,\sigma)+\mathbf{D}-\widehat{\mathbf{S}_{r}}(\tau,\sigma)}_2 = 
\sup_{\boldsymbol{\phi}\in\R^{\abs{\tau}},\boldsymbol{\psi}\in \R^{\abs{\sigma}}} 
\frac{\abs{\boldsymbol{\phi}^T(\mathbf{S}(\tau,\sigma)+\mathbf{D}-\widehat{\mathbf{S}_{r}}(\tau,\sigma))\boldsymbol{\psi}}}
{\norm{\boldsymbol{\phi}}_2\norm{\boldsymbol{\psi}}_2} 
\leq C h^{d-3} e^{-br^{1/(d+1)}},
\ee
where we have used \eqref{eq:basisisomorphism}.
Consequently we can find a matrix $\mathbf{S}_{r}(\tau,\sigma) := \widehat{\mathbf{S}_{r}}(\tau,\sigma)-\mathbf{D}$ 
of rank $2r+3$ such that
\bee
\norm{\mathbf{S}(\tau,\sigma)-\mathbf{S}_{r}(\tau,\sigma)}_2  
\leq C  h^{d-3} e^{-br^{1/(d+1)}}.
\ee
The estimate $\frac{1}{\norm{\mathbf{W}^{\rm st}}_2}\lesssim h^{-d+1}$ 
(with implied constant depending on $\alpha$) from \cite[Lemma~12.9]{Steinbach}
and $h \simeq N^{-1/(d-1)}$ finish the proof.
\epro

\subsection{Existence of $\H$-Cholesky decomposition}

In this subsection, we will use the approximation of the Schur complement from the previous section
to prove the existence of an (approximate) $\H$-Cholesky decomposition. 
We start with a hierarchical relation of the Schur complements $\mathbf{S}(\tau,\tau)$. \newline 

The Schur complements $\mathbf{S}(\tau,\tau)$ for a block $\tau \in \mathbb{T}_{\mathcal{I}}$ 
can be derived from the Schur complements of its sons $\tau_1$, $\tau_2$ by 
\bee
\mathbf{S}(\tau,\tau) = \begin{pmatrix} \mathbf{S}(\tau_1,\tau_1) & \mathbf{S}(\tau_1,\tau_2) \\ 
\mathbf{S}(\tau_2,\tau_1) & \mathbf{S}(\tau_2,\tau_2) + \mathbf{S}(\tau_2,\tau_1)\mathbf{S}(\tau_1,\tau_1)^{-1}\mathbf{S}(\tau_1,\tau_2) \end{pmatrix},
\ee
A proof of this relation can be found in \cite[Lemma 3.1]{Bebendorf07}. One should note that
the proof does not use any properties of the matrix $\mathbf{W}^{\rm st}$ other than invertibility and 
existence of a Cholesky decomposition. 
Moreover, we have by definition of $\mathbf{S}(\tau,\tau)$ that $\mathbf{S}(\mathcal{I},\mathcal{I}) = \mathbf{W}^{\rm st}$.

If $\tau$ is a leaf, we get the Cholesky decomposition of $\mathbf{S}(\tau,\tau)$ by the classical 
Cholesky decomposition,
which exists since $\mathbf{W}^{\rm st}$ has a Cholesky decomposition.
If $\tau$ is not a leaf, we use the hierarchical relation of the Schur complements to
define a Cholesky decomposition of the Schur complement
 $\mathbf{S}(\tau,\tau)$ by 
\been\label{eq:LUdefinition}
\mathbf{C}(\tau) := \begin{pmatrix} \mathbf{C}(\tau_1) & 0 \\ \mathbf{S}(\tau_2,\tau_1)(\mathbf{C}(\tau_1)^T)^{-1} & \mathbf{C}(\tau_2)  \end{pmatrix}, \quad 
%\mathbf{U}(\tau) := \begin{pmatrix} \mathbf{U}(\tau_1) & \mathbf{L}(\tau_1)^{-1}\mathbf{S}(\tau_1,\tau_2) \\ 0 & \mathbf{U}(\tau_2) \end{pmatrix},
\een
with $\mathbf{S}(\tau_1,\tau_1) = \mathbf{C}(\tau_1)\mathbf{C}(\tau_1)^T$,
 $\mathbf{S}(\tau_2,\tau_2) = \mathbf{C}(\tau_2)\mathbf{C}(\tau_2)^T$ and indeed get 
$\mathbf{S}(\tau,\tau) = \mathbf{C}(\tau) \mathbf{C}(\tau)^T$.
Moreover, the uniqueness of the Cholesky decomposition of $\mathbf{W}^{\rm st}$ implies that due to
$\mathbf{C}\mathbf{C}^T = \mathbf{W}^{\rm st} = \mathbf{S}(\mathcal{I},\mathcal{I}) = \mathbf{C}(\mathcal{I})\mathbf{C}(\mathcal{I})^T$, we have
$\mathbf{C} = \mathbf{C}(\mathcal{I})$.

The existence of the inverse $\mathbf{C}(\tau_1)^{-1}$
follows from the representation \eqref{eq:LUdefinition}
by induction over the levels, since on a leaf the existence is clear and the matrices 
$\mathbf{C}(\tau)$ are block triangular matrices. Consequently, the inverse of
$\mathbf{S}(\tau,\tau)$ exists. 

Moreover, as shown in \cite[Lemma~{22}]{GrasedyckKriemannLeBorne}  in the context of $LU$-factorizations
instead of Cholesky decompositions, the restriction of the lower triangular part 
$\mathbf{S}(\tau_2,\tau_1)(\mathbf{C}(\tau_1)^T)^{-1}$ 
of the matrix $\mathbf{C}(\tau)$ to a subblock $\tau_2'\times\tau_1'$
with $\tau_i'$ a son of $\tau_i$ satisfies
\been
\label{eq:foo}
\left(\mathbf{S}(\tau_2,\tau_1)(\mathbf{C}(\tau_1)^T)^{-1}\right)|_{\tau_2'\times\tau_1'} = 
\mathbf{S}(\tau_2',\tau_1')(\mathbf{C}(\tau_1')^T)^{-1}.
\een

The following lemma shows that the spectral norm of the inverse
$\mathbf{C}(\tau)^{-1}$ can be bounded by the norm of the inverse
$\mathbf{C}(\mathcal{I})^{-1}$.

\begin{lemma}\label{lem:LUnorm}
For $\tau\in \mathbb{T}_{\mathcal{I}}$,
let $\mathbf{C}(\tau)$ be given by \eqref{eq:LUdefinition}. Then,
\bea
\max_{\tau\in\mathbb{T}_{\mathcal{I}}}\norm{\mathbf{C}(\tau)^{-1}}_2 &=& \norm{\mathbf{C}(\mathcal{I})^{-1}}_2, \\
%\max_{\tau\in\mathbb{T}_{\mathcal{I}}}\norm{\mathbf{U}(\tau)^{-1}}_2 &=& \norm{\mathbf{U}(\mathcal{I})^{-1}}_2.
\eea
\end{lemma}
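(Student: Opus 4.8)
The plan is to read off the block structure of $\mathbf{C}(\tau)$ from the recursion \eqref{eq:LUdefinition} and combine it with the elementary observation that passing to a subblock cannot increase the spectral norm; a downward induction over the cluster tree then yields the claim.

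First I would note that, for a non-leaf cluster $\tau\in\mathbb{T}_{\mathcal{I}}$ with sons $\tau_1$, $\tau_2$, the matrix $\mathbf{C}(\tau)$ in \eqref{eq:LUdefinition} is block lower triangular with respect to the induced splitting $\tau = \tau_1\cup\tau_2$, with diagonal blocks $\mathbf{C}(\tau_1)$ and $\mathbf{C}(\tau_2)$. Since both diagonal blocks are invertible (as recalled above, this follows by induction over the levels), $\mathbf{C}(\tau)$ is invertible and its inverse is again block lower triangular,
\[
\mathbf{C}(\tau)^{-1} = \bpm \mathbf{C}(\tau_1)^{-1} & 0 \\ \ast & \mathbf{C}(\tau_2)^{-1} \epm,
\]
with lower-left block $\ast = -\mathbf{C}(\tau_2)^{-1}\mathbf{S}(\tau_2,\tau_1)(\mathbf{C}(\tau_1)^T)^{-1}\mathbf{C}(\tau_1)^{-1}$. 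In particular $\mathbf{C}(\tau_i)^{-1} = \mathbf{C}(\tau)^{-1}|_{\tau_i\times\tau_i}$ for $i=1,2$, where we use that the son clusters are numbered consecutively (as arranged in the text preceding Theorem~\ref{th:HLU}).

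Next I would invoke the standard fact that for any matrix $\mathbf{M}\in\R^{N\times N}$ and any index sets $\tau',\sigma'\subset\mathcal{I}$ one has $\norm{\mathbf{M}|_{\tau'\times\sigma'}}_2 \le \norm{\mathbf{M}}_2$, which follows by writing $\mathbf{M}|_{\tau'\times\sigma'}$ as a composition of $\mathbf{M}$ with coordinate projections of norm at most $1$. Applied to $\mathbf{M} = \mathbf{C}(\tau)^{-1}$, this yields $\norm{\mathbf{C}(\tau_i)^{-1}}_2 \le \norm{\mathbf{C}(\tau)^{-1}}_2$ for $i=1,2$ and every non-leaf $\tau$.

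Finally, a downward induction on $\level(\tau)$ concludes the proof: the inequality $\norm{\mathbf{C}(\tau)^{-1}}_2 \le \norm{\mathbf{C}(\mathcal{I})^{-1}}_2$ holds trivially for $\tau=\mathcal{I}$, and if it holds for some $\tau$ then by the previous step it holds for the sons of $\tau$ as well; hence it holds for every $\tau\in\mathbb{T}_{\mathcal{I}}$, so that $\max_{\tau\in\mathbb{T}_{\mathcal{I}}}\norm{\mathbf{C}(\tau)^{-1}}_2 = \norm{\mathbf{C}(\mathcal{I})^{-1}}_2$ since $\mathcal{I}\in\mathbb{T}_{\mathcal{I}}$. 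No step here poses a genuine difficulty; the only point to be careful about is the consecutive numbering of son clusters, which guarantees that $\mathbf{C}(\tau_i)^{-1}$ is literally a principal subblock of $\mathbf{C}(\tau)^{-1}$ rather than merely permutation-similar to one.
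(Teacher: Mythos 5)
Your proof is correct and follows essentially the same route as the paper: both identify $\mathbf{C}(\tau_i)^{-1}$ as the diagonal blocks of the block lower triangular inverse $\mathbf{C}(\tau)^{-1}$ and deduce $\norm{\mathbf{C}(\tau_i)^{-1}}_2 \leq \norm{\mathbf{C}(\tau)^{-1}}_2$ before inducting down the tree. The only cosmetic difference is that you invoke the general fact that a subblock's spectral norm is dominated by that of the full matrix, whereas the paper instantiates this by testing with vectors supported on $\tau_2$ (and transposing for $\tau_1$); the content is identical.
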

\begin{proof}
With the block structure of \eqref{eq:LUdefinition}, we get the inverse
\bee
\mathbf{C}(\tau)^{-1} = \begin{pmatrix} \mathbf{C}(\tau_1)^{-1} & 0 \\ -\mathbf{C}(\tau_2)^{-1} \mathbf{S}(\tau_2,\tau_1)(\mathbf{C}(\tau_1)^T)^{-1}\mathbf{C}(\tau_1)^{-1} & \mathbf{C}(\tau_2)^{-1} \end{pmatrix}.
\ee
So, we get by choosing $\mathbf{x}$ such that $\mathbf{x}_i = 0$ for $i \in \tau_1$ that
\bea
\norm{\mathbf{C}(\tau)^{-1}}_2 = \sup_{\mathbf{x}\in \R^{\abs{\tau}},\norm{x}_2=1}\norm{\mathbf{C}(\tau)^{-1}\mathbf{x}}_2 
\geq \sup_{\mathbf{x}\in \R^{\abs{\tau_2}},\norm{x}_2=1}\norm{\mathbf{C}(\tau_2)^{-1}\mathbf{x}}_2 
= \norm{\mathbf{C}(\tau_2)^{-1}}_2.
\eea
The same argument for $\left(\mathbf{C}(\tau)^{-1}\right)^T$ leads to 
\bea
\norm{\mathbf{C}(\tau)^{-1}}_2 = \norm{\left(\mathbf{C}(\tau)^{-1}\right)^T}_2 \geq \norm{\mathbf{C}(\tau_1)^{-1}}_2.
\eea
Thus, we have $\norm{\mathbf{C}(\tau)^{-1}}_2 \geq \max_{i=1,2}\norm{\mathbf{C}(\tau_i)^{-1}}_2$ and as a consequence
$\max_{\tau\in\mathbb{T}_{\mathcal{I}}}\norm{\mathbf{C}(\tau)^{-1}}_2 = \norm{\mathbf{C}(\mathcal{I})^{-1}}_2$.
\end{proof}

%We can now formulate the existence result for an $\H$-Cholesky decomposition.

%\begin{theorem}\label{th:HLU}
%Let $\mathbf{V} = \mathbf{C}\mathbf{C}^T$ be the Cholesky decomposition.
%Let a partition $P$ of $\mathcal{I}\times \mathcal{I}$ be based on a cluster tree 
%$\mathbb{T}_{\mathcal{I}}$.
%Then, there exist a lower triangular blockwise rank-$r$ matrix $\mathbf{C_{\H}}$ such that 
%\bean
%\norm{\mathbf{V}-\mathbf{C_{\H}}\mathbf{C_{\H}}^T}_2 &\leq& \Big(C_{\rm LU} h^{-1} {\rm depth}(\mathbb{T}_{\mathcal{I}}) e^{-br^{1/(d+1)}} \\
% & & \;\; + C_{\rm LU}^2 h^{-2} {\rm depth}(\mathbb{T}_{\mathcal{I}})^2 e^{-2br^{1/(d+1)}}\Big)\norm{\mathbf{V}}_2, \nonumber
%\eean
%where $C_{\rm LU} = 2C_{\rm sp}C_{\rm apx}\sqrt{\kappa_2(\mathbf{V})}$, with the constant $C_{\rm apx}$ 
%from Theorem~\ref{th:blockapprox} and the spectral condition number $\kappa_2(\mathbf{V})$. 
%\end{theorem}
We are now in position to prove Theorem~\ref{th:HLU}:

\begin{proof}[of Theorem~\ref{th:HLU}]
{\em Proof of (\ref{item:th:HLU-i}):} 
In the following, we show that every admissible subblock $\tau\times\sigma$ of $\mathbf{C}(\mathcal{I})$, 
recursively defined by \eqref{eq:LUdefinition}, has a rank-$r$ approximation. Since an admissible block
of the lower triangular part of $\mathbf{C}(\mathcal{I})$
has to be a subblock of a matrix $\mathbf{C}(\tau')$ for some 
$\tau' \in \mathbb{T}_{\mathcal{I}}$, we get in view of \eqref{eq:foo} that 
$\mathbf{C}(\mathcal{I})|_{\tau\times\sigma} = \mathbf{S}(\tau,\sigma)(\mathbf{C}(\sigma)^T)^{-1}$.
Theorem~\ref{lem:Schur} provides a rank-$r$ approximation 
${\mathbf S}_{r}(\tau,\sigma)$ to ${\mathbf S}(\tau,\sigma)$. Therefore, we can estimate 
\bea
\norm{\mathbf{C}(\mathcal{I})|_{\tau\times\sigma}\! - \! \mathbf{S}_{r}(\tau,\sigma)(\mathbf{C}(\sigma)^T)^{-1}}_2 \!\! &=& \!\!
\norm{\left(\mathbf{S}(\tau,\sigma)-\mathbf{S}_{r}(\tau,\sigma)\right)(\mathbf{C}(\sigma)^T)^{-1}}_2 \\
&\leq&\!\! C_{\rm sc}   N^{2/(d-1)}  e^{-br^{1/(d+1)}}\norm{(\mathbf{C}(\sigma')^T)^{-1}}_2\norm{\mathbf{W}^{\rm st}}_2.
\eea 
Since $\mathbf{S}_{r}(\tau,\sigma)(\mathbf{C}(\sigma)^T)^{-1}$ is a rank-$r$ matrix for each $\eta$-admissible 
cluster pair $(\tau,\sigma)$, we immediately get an $\H$-matrix approximation $\mathbf{C}_{\H}$
of the Cholesky factor $\mathbf{C}(\mathcal{I}) = \mathbf{C}$. With Lemma~\ref{lem:spectralnorm} 
and Lemma~\ref{lem:LUnorm}, we get
\bee
\norm{\mathbf{C}-\mathbf{C}_{\H}}_2\leq C_{\rm sc} C_{\rm sp}  N^{2/(d-1)} 
{\rm depth}(\mathbb{T}_{\mathcal{I}})e^{-br^{1/(d+1)}}
\norm{\mathbf{C}^{-1}}_2\norm{\mathbf{W}^{\rm st}}_2,
\ee
and with $\norm{\mathbf{W}^{\rm st}}_2 = \norm{\mathbf{C}}_2^2$, we conclude the proof of 
(\ref{item:th:HLU-i}). 

{\em Proof of (\ref{item:th:HLU-ii}):} 
Since $\mathbf{W}^{\rm st}=\mathbf{C}\mathbf{C}^T$, the triangle inequality leads to
\bea
\norm{\mathbf{W}^{\rm st}-\mathbf{C}_{\H}\mathbf{C}_{\H}^T}_2 &\leq& 
\norm{\mathbf{C}-\mathbf{C}_{\H}}_2\norm{\mathbf{C}^T}_2 + 
\norm{\mathbf{C}^T-\mathbf{C}_{\H}^T}_2 \norm{\mathbf{C}}_2 + 
\norm{\mathbf{C}-\mathbf{C}_{\H}}_2\norm{\mathbf{C}^T-\mathbf{C}_{\H}^T}_2 \\
&\leq& 2C_{\rm sc}C_{\rm sp}\kappa_2(\mathbf{C}){\rm depth}(\mathbb{T}_{\mathcal{I}})
 N^{2/(d-1)}  e^{-br^{1/(d+1)}}\norm{\mathbf{W}^{\rm st}}_2\\
 & & +\kappa_2(\mathbf{C})^2C_{\rm sc}^2C_{\rm sp}^2 {\rm depth}(\mathbb{T}_{\mathcal{I}})^2
 N^{4/(d-1)}  e^{-2br^{1/(d+1)}}\frac{\norm{\mathbf{W}^{\rm st}}_2^2}{\norm{\mathbf{C}}_2^2},
\eea
and the equality $\kappa_2(\mathbf{W}^{\rm st}) = \kappa_2(\mathbf{C})^2$ finishes the proof of 
(\ref{item:th:HLU-ii}).

{\em Proof of (\ref{item:th:HLU-iii}):}
The approximate $LU$-factors $\mathbf{L_{\H}}, \mathbf{U_{\H}}$ can be constructed from $\mathbf{C_{\H}}$ by
\been
\mathbf{L}_{\H}\mathbf{U}_{\H} = \bpm \mathbf{C_{\H}} & 0 \\ \boldsymbol{\ell}^T & -\abs{\mathbf{B}}^2 \epm
 \bpm \mathbf{C_{\H}}^T & \boldsymbol{\ell} \\ 0 & 1 \epm = 
\bpm \mathbf{C_{\H}}\mathbf{C_{\H}}^T & \mathbf{B} \\ \mathbf{B}^T & 0 \epm,
\een
where $\boldsymbol{\ell} \in \R^N$ solves $\mathbf{C}_{\H}\boldsymbol{\ell} = \mathbf{B}$, and the error estimate 
follows from (\ref{item:th:HLU-ii}).
\end{proof}

\section{Numerical Examples}
\label{sec:numerics}
In this section, we present some numerical examples in dimension $d = 3$  to illustrate
the theoretical estimates derived in the previous sections.
Further numerical examples about $\H$-matrix approximation of inverse BEM matrices and black-box
preconditioning with an $\H$-LU decomposition can be found, e.g., in 
\cite{GrasedyckDissertation,Bebendorf05,Grasedyck05,BoermBuch,FMPBEM}, where the focus is, however,
on the weakly-singular integral operator.

With the choice $\eta=2$ for the admissibility parameter in \eqref{eq:admissibility},
the clustering is done by the standard geometric clustering algorithm,
i.e., by choosing axis-parallel bounding boxes of minimal volume and
splitting these bounding boxes in half across the largest face 
until they are admissible or contain less degrees of freedom than $n_{\text{leaf}}$,
which we choose as $n_{\text{leaf}} = 50$ for our computations.
An approximation to the inverse Galerkin matrix is computed by using the C++-software package 
BEM++ \cite{BEMpp}. The $\H$-matrices are assembled using ACA and the C++-library
AHMED \cite{AHMED}.

Our numerical experiments are performed for the Galerkin discretization of 
the stabilized hyper-singular integral operator $\mathbf{W}^{\rm st}$ 
as described in Section~\ref{sec:stabGalerkin} with $\alpha = 1$.
The geometry is the crankshaft generated by NETGEN \cite{netgen} visualized in Figure~\ref{fig:crankshaft}.
We employ a fixed triangulation of the crankshaft consisting of $5,393$ nodes and $6,992$ elements. 
\begin{figure}[h]
\begin{center}
\includegraphics[width=0.35\textwidth]{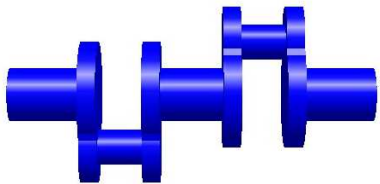}
\caption{\footnotesize Crankshaft domain}
\label{fig:crankshaft}
\end{center}
\end{figure}
\begin{example} 
{\rm 
The numerical calculations are performed for the polynomial degree $p=2$, resulting in 
$N = 13,986$ degrees of freedom. The largest block of $\mathbf{W}_{\H}$ has a size of $1,746^2$. 
In Figure~\ref{fig:3DBEMp2r}, we compare the decrease of the upper bound 
$\norm{\mathbf{I}-\mathbf{W}^{\rm st}\mathbf{W}_{\H}}_2$ of the relative error with the increase 
of the block-rank. Figure~\ref{fig:3DBEMp2Mem} shows the storage requirement for the 
computed $\H$-matrix approximation in MB. Storing the dense matrix would need $1,492$ MB.
We observe exponential convergence in the block rank, even with a convergence behavior 
$\exp(-br^{1/2})$, which is faster than the rate of $\exp(-br^{1/4})$ guaranteed by Theorem~\ref{th:Happrox}.
Moreover, we also observe exponential convergence of the error compared to the increase of required memory. 
\begin{figure}[hbt]
\begin{minipage}{.50\linewidth}
\centering
\psfrag{Error}[c][c]{%
 \footnotesize  Error}
\psfrag{Block rank r }[c][c]{%
 \footnotesize  Block rank r}
\psfrag{asd}[l][l]{\tiny $\exp(-2.2\, r^{1/2})$}
\psfrag{jkl}[l][l]{\tiny $\norm{I-\mathbf{W}^{\rm st}W_{\H}}_2$}
\includegraphics[width=0.80\textwidth]{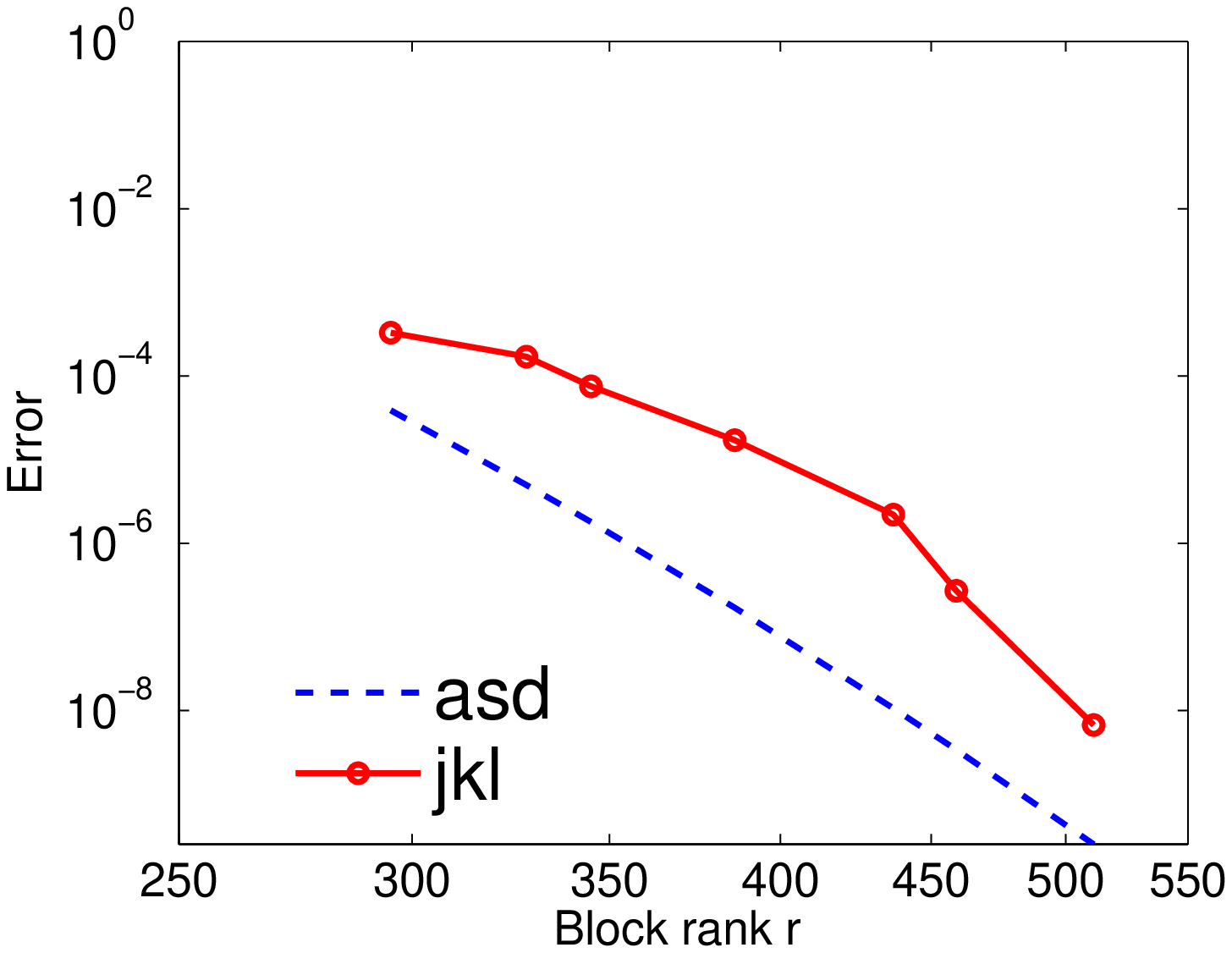}
\caption{\footnotesize Exponential convergence in block rank}
\label{fig:3DBEMp2r}
\end{minipage}
\begin{minipage}{.50\linewidth}
\centering
\psfrag{Error}[c][c]{%
 \footnotesize  Error}
\psfrag{Memory (MB)}[c][c]{%
 \footnotesize  Memory (MB)}
\psfrag{asd}[l][l]{\tiny $\exp(-1.6\, r^{1/2})$}
\psfrag{jkl}[l][l]{\tiny $\norm{I-\mathbf{W}^{\rm st}W_{\H}}_2$}
\includegraphics[width=0.80\textwidth]{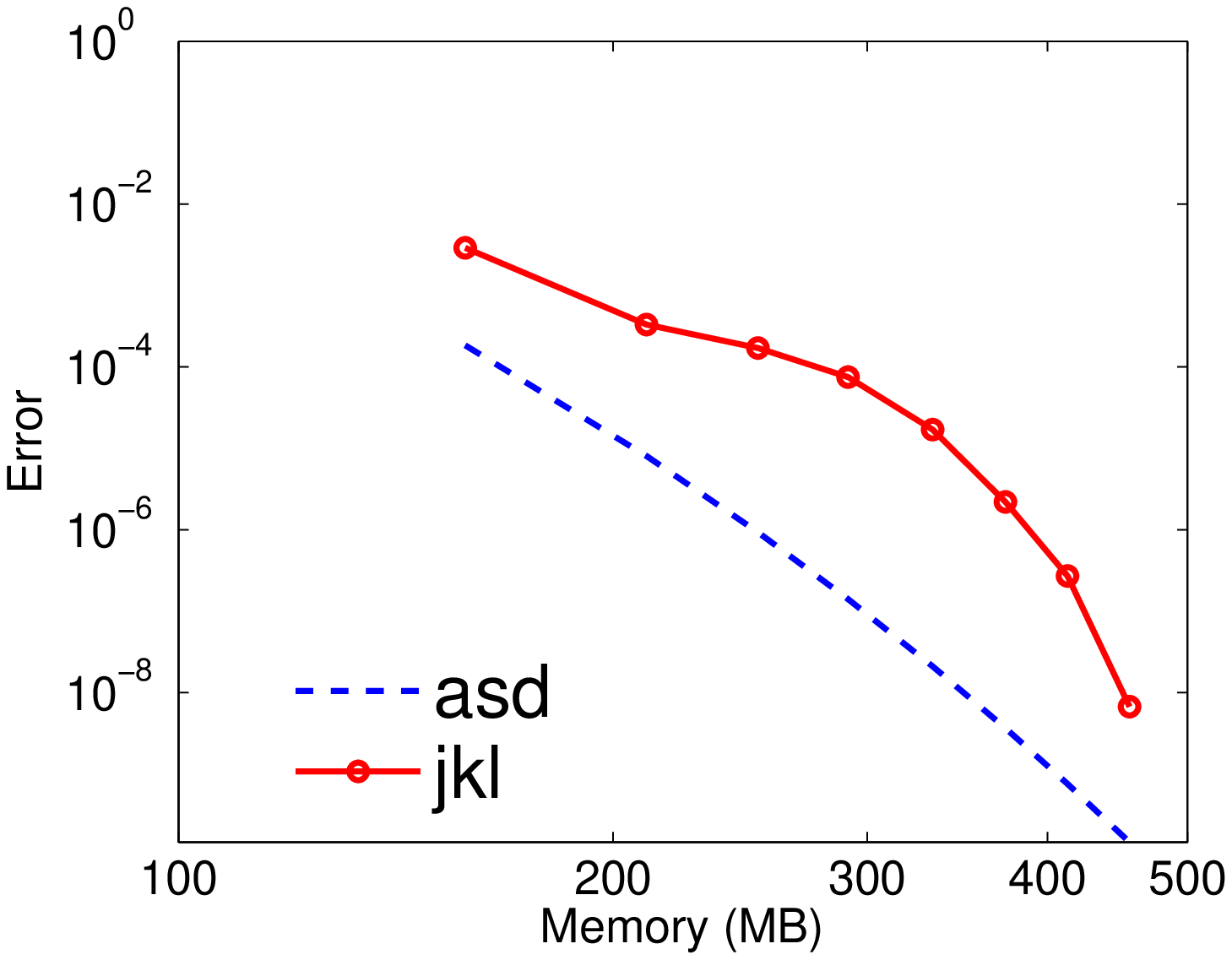}
\caption{\footnotesize Exponential convergence in memory required}
\label{fig:3DBEMp2Mem}
\end{minipage}
\end{figure}
\eex
}
\end{example}
%------------------------
\begin{example}
{\rm 
We consider the case $p = 3$, which leads to $N= 31,466$ degrees of freedom. 
The largest block of $\mathbf{W}_{\H}$ has a size of $3,933^2$. 
Storing the dense matrix would need $7,608$ MB.
%In Figure~\ref{fig:3DBEMp3r}, 
%
\begin{figure}[hbt]
\begin{minipage}{.50\linewidth}
\centering
\psfrag{Error}[c][c]{%
 \footnotesize  Error}
\psfrag{Block rank r }[c][c]{%
 \footnotesize  Block rank r}
\psfrag{asd}[l][l]{\tiny $\exp(-1.7\, r^{1/2})$}
\psfrag{jkl}[l][l]{\tiny $\norm{I-\mathbf{W}^{\rm st}W_{\H}}_2$}
\includegraphics[width=0.80\textwidth]{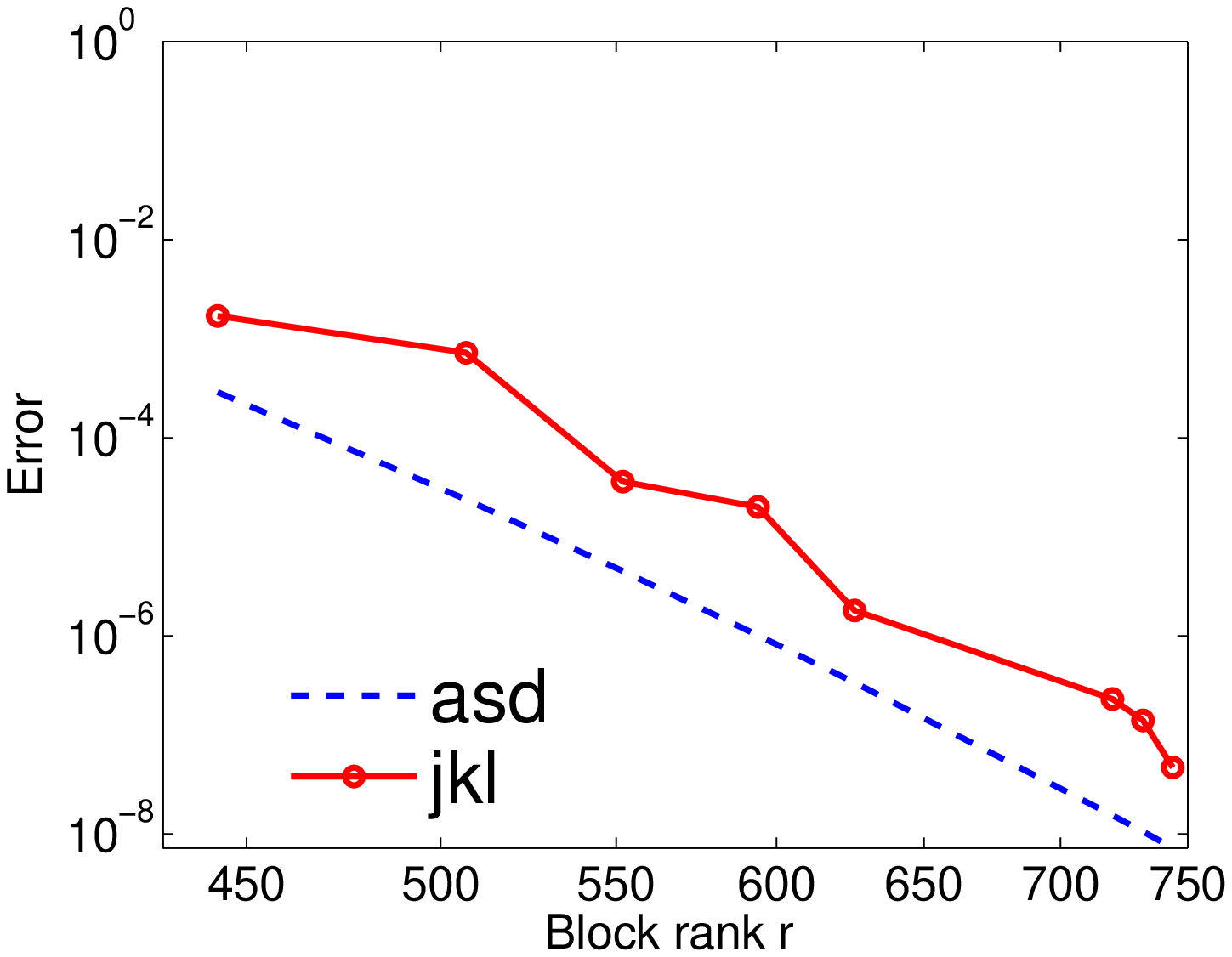}
\caption{\footnotesize Exponential convergence in block rank}
\label{fig:3DBEMp3r}
\end{minipage}
\begin{minipage}{.50\linewidth}
\centering
\psfrag{Error}[c][c]{%
 \footnotesize  Error}
\psfrag{Memory (MB)}[c][c]{%
 \footnotesize  Memory (MB)}
\psfrag{asd}[l][l]{\tiny $\exp(-0.8\, r^{1/2})$}
\psfrag{jkl}[l][l]{\tiny $\norm{I-\mathbf{W}^{\rm st} W_{\H}}_2$}
\includegraphics[width=0.80\textwidth]{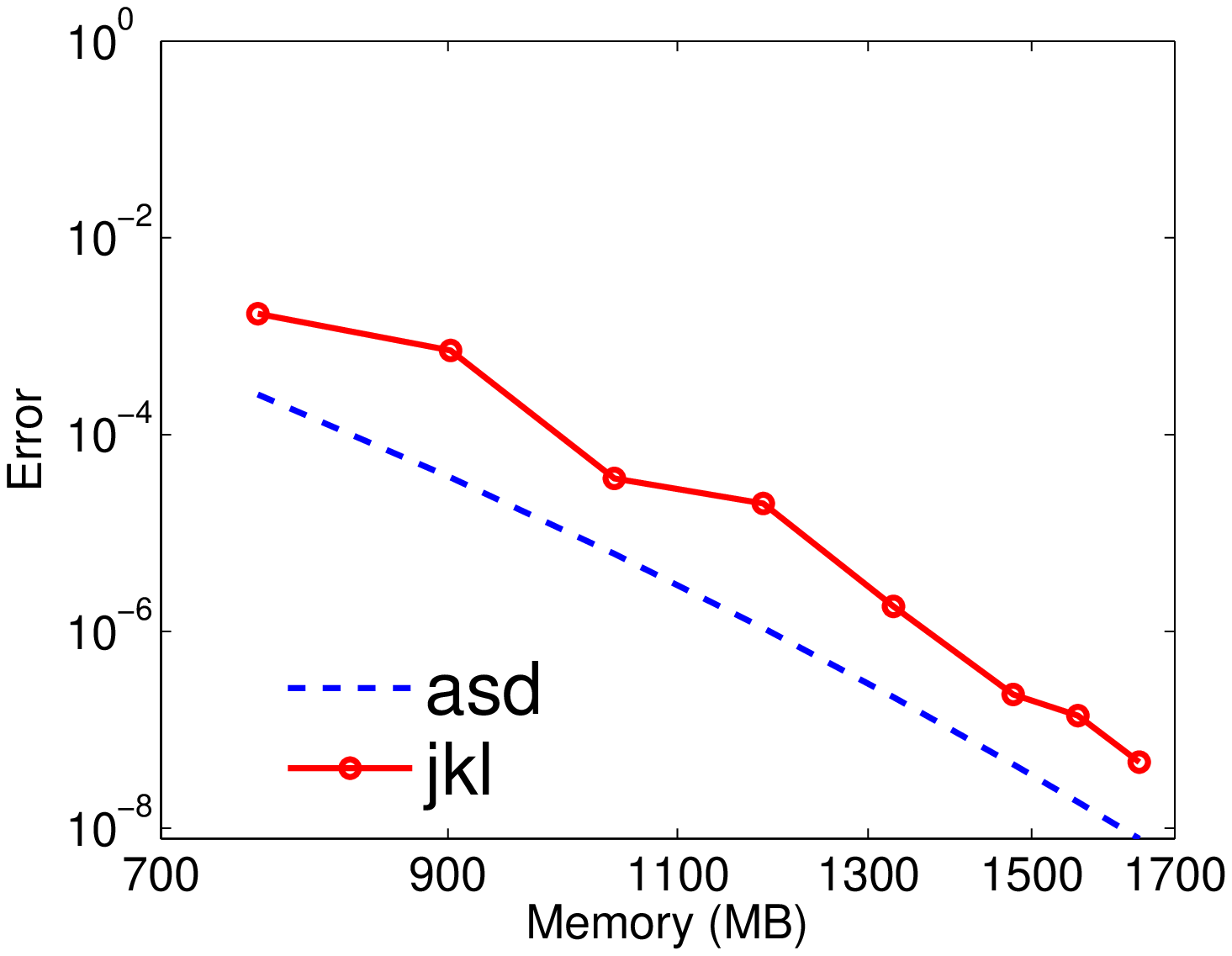}
\caption{\footnotesize Exponential convergence in memory required}
\label{fig:3DBEMp3Mem}
\end{minipage}
\end{figure}
We observe in Figure~\ref{fig:3DBEMp3r} exponential convergence both in the block rank and in the memory.
}\eex
\end{example}

%-------------------------------------------------------------
\nocite{*}
\bibliography{bibliography_2}{}

\newcommand{\etalchar}[1]{$^{#1}$}
\providecommand{\bysame}{\leavevmode\hbox to3em{\hrulefill}\thinspace}
\providecommand{\MR}{\relax\ifhmode\unskip\space\fi MR }
% \MRhref is called by the amsart/book/proc definition of \MR.
\providecommand{\MRhref}[2]{%
  \href{http://www.ams.org/mathscinet-getitem?mr=#1}{#2}
}
\providecommand{\href}[2]{#2}
\begin{thebibliography}{DKP{\etalchar{+}}08}

\bibitem[Beb00]{bebendorf00}
M.~Bebendorf, \emph{Approximation of boundary element matrices}, Numer. Math.
  \textbf{86} (2000), no.~4, 565--589.

\bibitem[Beb05a]{Bebendorf4}
\bysame, \emph{Efficient inversion of {G}alerkin matrices of general
  second-order elliptic differential operators with nonsmooth coefficients},
  Math. Comp. \textbf{74} (2005), 1179--1199.

\bibitem[Beb05b]{Bebendorf05}
\bysame, \emph{Hierarchical {LU} decomposition-based preconditioners for
  {BEM}}, Computing \textbf{74} (2005), no.~3, 225--247.

\bibitem[Beb07]{Bebendorf07}
\bysame, \emph{Why finite element discretizations can be factored by triangular
  hierarchical matrices}, SIAM J. Numer. Anal. \textbf{45} (2007), no.~4,
  1472--1494.

\bibitem[Beb12]{AHMED}
\bysame, \emph{Another software library on hierarchical matrices for elliptic
  differential equations ({AHMED})},
  http://bebendorf.ins.uni-bonn.de/AHMED.html (2012).

\bibitem[BG05]{boerm-grasedyck05}
S.~B{\"o}rm and L.~Grasedyck, \emph{Hybrid cross approximation of integral
  operators}, Numer. Math. \textbf{101} (2005), no.~2, 221--249.

\bibitem[BH03]{Bebendorf}
M.~Bebendorf and W.~Hackbusch, \emph{Existence of {$\mathcal{H}$}-matrix
  approximants to the inverse {FE}-matrix of elliptic operators with
  {$L^{\infty}$}-coefficients}, Numer. Math. \textbf{95} (2003), no.~1, 1--28.

\bibitem[B{\"o}r10a]{Boerm}
S.~B{\"o}rm, \emph{Approximation of solution operators of elliptic partial
  differential equations by {$\mathcal{H}$}- and {$\mathcal{H}^2$}-matrices},
  Numer. Math. \textbf{115} (2010), no.~2, 165--193.

\bibitem[B{\"o}r10b]{BoermBuch}
\bysame, \emph{Efficient numerical methods for non-local operators}, EMS Tracts
  in Mathematics, vol.~14, European Mathematical Society (EMS), Z\"urich, 2010.

\bibitem[BS02]{BrennerScott}
S.~C. Brenner and L.~R. Scott, \emph{The mathematical theory of finite element
  methods}, Texts in Applied Mathematics, vol.~15, Springer-Verlag, New York,
  2002.

\bibitem[DFG{\etalchar{+}}01]{DFGHS}
W.~Dahmen, B.~Faermann, I.~G. Graham, W.~Hackbusch, and S.~A. Sauter,
  \emph{Inverse inequalities on non-quasiuniform meshes and application to the
  mortar element method}, Math. Comp. \textbf{73} (2001), 1107--1138.

\bibitem[DKP{\etalchar{+}}08]{demkowicz-kurtz-pardo-paszynski-rachowicz-zdunek%
08}
Leszek Demkowicz, Jason Kurtz, David Pardo, Maciej Paszy{\'n}ski, Waldemar
  Rachowicz, and Adam Zdunek, \emph{Computing with {$hp$}-adaptive finite
  elements. {V}ol. 2}, Chapman \& Hall/CRC Applied Mathematics and Nonlinear
  Science Series, Chapman \& Hall/CRC, Boca Raton, FL, 2008, Frontiers: {T}hree
  dimensional elliptic and Maxwell problems with applications.

\bibitem[FMP12]{FMPIcosahom}
M.~Faustmann, J.~M. Melenk, and D.~Praetorius, \emph{A new proof for existence
  of {$\mathcal{H}$}-matrix approximants to the inverse of {FEM} matrices: the
  {D}irichlet problem for the {L}aplacian}, ASC Report 51/2012, Institute for
  Analysis and Scientific Computing, Vienna University of Technology, Wien
  (2012).

\bibitem[FMP13a]{FMPBEM}
\bysame, \emph{Existence of $\mathcal{H}$-matrix approximants to the inverse of
  {BEM} matrices: the simple-layer operator}, ASC Report 20/2013, Institute for
  Analysis and Scientific Computing, Vienna University of Technology, Wien
  (2013).

\bibitem[FMP13b]{FMPFEM}
\bysame, \emph{$\mathcal{H}$-matrix approximability of the inverse of {FEM}
  matrices}, ASC Report 37/2013, Institute for Analysis and Scientific
  Computing, Vienna University of Technology, Wien (2013).

\bibitem[GH03]{GrasedyckHackbusch}
L.~Grasedyck and W.~Hackbusch, \emph{Construction and arithmetics of
  {$\mathcal{H}$}-matrices}, Computing \textbf{70} (2003), no.~4, 295--334.

\bibitem[GHS05]{GHS}
I.~G. Graham, W.~Hackbusch, and S.~A. Sauter, \emph{Finite elements on
  degenerate meshes: inverse-type inequalities and applications}, IMA J. Numer.
  Anal. \textbf{25} (2005), no.~2, 379--407.

\bibitem[GKLB09]{GrasedyckKriemannLeBorne}
L.~Grasedyck, R.~Kriemann, and S.~Le~Borne, \emph{Domain decomposition based
  {$\mathcal{H}$}-{LU} preconditioning}, Numer. Math. \textbf{112} (2009),
  no.~4, 565--600.

\bibitem[GR97]{greengard-rokhlin97}
L.~Greengard and V.~Rokhlin, \emph{A new version of the fast multipole method
  for the {L}aplace in three dimensions}, Acta Numerica 1997, Cambridge
  University Press, 1997, pp.~229--269.

\bibitem[Gra01]{GrasedyckDissertation}
L.~Grasedyck, \emph{Theorie und {A}nwendungen {H}ierarchischer {M}atrizen},
  Ph.D. thesis, Universit{\"a}t Kiel, 2001.

\bibitem[Gra05]{Grasedyck05}
\bysame, \emph{Adaptive recompression of {$\mathcal{H}$}-matrices for {BEM}},
  Computing \textbf{74} (2005), no.~3, 205--223.

\bibitem[Hac99]{Hackbusch99}
W.~Hackbusch, \emph{A sparse matrix arithmetic based on
  {$\mathcal{H}$}-matrices. {I}ntroduction to {$\mathcal{H}$}-matrices},
  Computing \textbf{62} (1999), no.~2, 89--108.

\bibitem[Hac09]{HackbuschBuch}
\bysame, \emph{Hierarchische {M}atrizen: {A}lgorithmen und {A}nalysis},
  Springer, 2009.

\bibitem[HJ13]{horn-johnson13}
R.A. Horn and Ch.R. Johnson, \emph{Matrix analysis}, second ed., Cambridge
  University Press, Cambridge, 2013.

\bibitem[HKS00]{HackbuschKhoromskijSauter}
W.~Hackbusch, B.~Khoromskij, and S.~A. Sauter, \emph{On
  {$\mathcal{H}^2$}-matrices}, Lectures on Applied Mathematics (2000), 9--29.

\bibitem[HN89]{hackbusch-nowak89}
W.~Hackbusch and Z.P. Nowak, \emph{On the fast matrix multiplication in the
  boundary element method by panel clustering}, Numer. Math. \textbf{54}
  (1989), 463--491.

\bibitem[HS93]{hackbusch-sauter93}
W.~Hackbusch and S.A. Sauter, \emph{On the efficient use of the {G}alerkin
  method to solve {F}redholm integral equations}, Proceedings of {ISNA}
  '92---{I}nternational {S}ymposium on {N}umerical {A}nalysis, {P}art {I}
  ({P}rague, 1992), vol.~38, 1993, pp.~301--322.

\bibitem[KS99]{karniadakis-sherwin99}
G.E. Karniadakis and S.J. Sherwin, \emph{Spectral/hp element methods for cfd},
  Oxford University Press, 1999.

\bibitem[Ne{\v{c}}67]{Necas}
J.~Ne{\v{c}}as, \emph{Les m\'ethodes directes en th\'eorie des \'equations
  elliptiques}, Masson et Cie, \'Editeurs, Paris, 1967.

\bibitem[NH88]{hackbusch-nowak88}
Z.P. Novak and W.~Hackbusch, \emph{Complexity of the method of panels},
  Computational processes and systems, {N}o.\ 6 ({R}ussian), ``Nauka'', Moscow,
  1988, pp.~233--244.

\bibitem[NS74]{nitsche-schatz74}
Joachim~A. Nitsche and Alfred~H. Schatz, \emph{Interior estimates for
  {R}itz-{G}alerkin methods}, Math. Comp. \textbf{28} (1974), 937--958.

\bibitem[Rat98]{rathsfeld98}
A.~Rathsfeld, \emph{A wavelet algorithm for the boundary element solution of a
  geodetic boundary value problem}, Comput. Methods Appl. Mech. Engrg.
  \textbf{157} (1998), no.~3-4, 267--287, Seventh Conference on Numerical
  Methods and Computational Mechanics in Science and Engineering (NMCM 96)
  (Miskolc).

\bibitem[Rat01]{rathsfeld01}
\bysame, \emph{On a hierarchical three-point basis in the space of piecewise
  linear functions over smooth surfaces}, Problems and methods in mathematical
  physics ({C}hemnitz, 1999), Oper. Theory Adv. Appl., vol. 121, Birkh\"auser,
  Basel, 2001, pp.~442--470.

\bibitem[Rok85]{rokhlin85}
V.~Rokhlin, \emph{Rapid solution of integral equations of classical potential
  theory}, J. Comput. Phys. \textbf{60} (1985), 187--207.

\bibitem[Sau92]{sauter92}
S.A. Sauter, \emph{{\"U}ber die effiziente {V}erwendung des
  {G}alerkinverfahrens zur {L\"o}sung {F}redholmscher {I}ntegralgleichungen},
  Ph.D. thesis, {U}niversit\"at {K}iel, 1992.

\bibitem[SBA{\etalchar{+}}15]{BEMpp}
W.~Smigaj, T.~Betcke, S.~R. Arridge, J.~Phillips, and M.~Schweiger,
  \emph{Solving boundary integral problems with {BEM}++}, ACM Transactions on
  Mathematical Software (to appear (2015)).

\bibitem[Sch97]{netgen}
J.~Sch{\"o}berl, \emph{{NETGEN} - {A}n advancing front 2{D}/3{D}-mesh generator
  based on abstract rules}, Comput.Visual.Sci (1997), no.~1, 41--52.

\bibitem[Sch98a]{schneider98}
R.~Schneider, \emph{Multiskalen- und {W}avelet-{M}atrixkompression:
  {A}nalysisbasierte {M}ethoden zur effizienten {L}\"osung gro\ss{}er
  vollbesetzter {G}leichungssysteme}, Advances in Numerical Mathematics,
  Teubner, 1998.

\bibitem[Sch98b]{SchwabBuch}
Ch. Schwab, \emph{{$p$}- and {$hp$}-finite element methods}, Numerical
  Mathematics and Scientific Computation, The Clarendon Press Oxford University
  Press, New York, 1998, Theory and applications in solid and fluid mechanics.

\bibitem[Sch06]{schreittmiller06}
Robert Schrittmiller, \emph{Zur {A}pproximation der {L}{\"o}sungen elliptischer
  {S}ysteme partieller {D}ifferentialgleichungen mittels {F}initer {E}lemente
  und ${\mathcal {\char 72}}$-{M}atrizen}, Ph.D. thesis, Technische
  Universit{\"a}t M{\"u}nchen, 2006.

\bibitem[SS11]{SauterSchwab}
S.A. Sauter and Ch. Schwab, \emph{Boundary element methods}, Springer Series in
  Computational Mathematics, vol.~39, Springer-Verlag, Berlin, 2011.

\bibitem[Ste70]{stein70}
E.M. Stein, \emph{Singular integrals and differentiability properties of
  functions}, Princeton University Press, 1970.

\bibitem[Ste08]{Steinbach}
O.~Steinbach, \emph{Numerical approximation methods for elliptic boundary value
  problems}, Springer, New York, 2008.

\bibitem[SZ90]{ScottZhang}
L.~R. Scott and S.~Zhang, \emph{Finite element interpolation of nonsmooth
  functions satisfying boundary conditions}, Math. Comp. \textbf{54} (1990),
  no.~190, 483--493.

\bibitem[Tau03]{tausch03}
J.~Tausch, \emph{Sparse {BEM} for potential theory and {S}tokes flow using
  variable order wavelets}, Comput. Mech. \textbf{32} (2003), no.~4-6,
  312--318.

\bibitem[TW03]{tausch-white03}
J.~Tausch and J.~White, \emph{Multiscale bases for the sparse representation of
  boundary integral operators on complex geometry}, SIAM J. Sci. Comput.
  \textbf{24} (2003), no.~5, 1610--1629.

\bibitem[Tyr00]{tyrtyshnikov00}
E.E. Tyrtyshnikov, \emph{Incomplete cross approximation in the mosaic-skeleton
  method}, Computing \textbf{64} (2000), no.~4, 367--380, International
  GAMM-Workshop on Multigrid Methods (Bonn, 1998).

\bibitem[vPSS97]{petersdorff-schwab-schneider97}
T.~von Petersdorff, Ch. Schwab, and R.~Schneider, \emph{Multiwavelets for
  second-kind integral equations}, SIAM J. Numer. Anal. \textbf{34} (1997),
  no.~6, 2212--2227.

\bibitem[Wah91]{wahlbin91}
L.~Wahlbin, \emph{Local behavior in finite element methods}, Handbook of
  numerical analysis. Volume II: Finite element methods (Part 1) (P.G. Ciarlet
  and J.L. Lions, eds.), North Holland, 1991, pp.~353--522.

\end{thebibliography}
\bibliographystyle{amsalpha}

\end{document}